\documentclass[a4paper]{amsart}
\usepackage{graphicx, url, color, xcolor, amssymb, amsmath,enumitem,multirow,appendix, bbm, soul} 
\usepackage{hyperref}
\usepackage[new]{old-arrows}

\theoremstyle{plain}
\newtheorem{thm}{Theorem}[section]
\newtheorem*{thm*}{Theorem}
\newtheorem{lem}[thm]{Lemma}

\newtheorem{prop}[thm]{Proposition}
\newtheorem{conj}[thm]{Conjecture}

\theoremstyle{definition}

\newtheorem{example}[thm]{Example}
\newtheorem{rem}[thm]{Remark}

\title{Hecke modules associated to certain ternary lattices of half-discriminant $p^2$ and $4p^2$.}
\author{Giovanna De Lauri, Dan Fretwell}
\date{}

\begin{document}

\begin{abstract}
We consider Hecke modules of orthogonal modular forms attached to definite ternary lattices of half-discriminant $p^2$ with $p$ prime and $4p^2$ with $p$ an odd prime. Where possible, we prove explicit Jacquet-Langlands style correspondences with spaces of classical modular forms (in the spirit of Birch \cite{birch1991hecke} and Hein-Tornaria-Voight \cite{heintornariavoight}), and give conjectures for the remaining genera. This is done by applying sequences of Watson-style transformations that reduce the discriminant while preserving the adelic orthogonal group.
\end{abstract}

\maketitle

\section{Introduction}

The classification of lattices up to isometry is a long-standing open problem with a rich history. While the rational quadratic spaces in which they are embedded can be completely classified via their localisations by the Hasse-Minkowski Theorem, lattices are known to resist complete classification by purely local methods. The failure of the local-global principle for a particular lattice $\Lambda$ in a rational quadratic space $(V,Q)$ is typically measured by considering its class set $\text{Cl}(\Lambda) = O(V)\backslash \text{gen}(\Lambda)$, the finite set of isometry classes of lattices that are locally isometric to $\Lambda$ everywhere. When $\text{Cl}(\Lambda) = \{[\Lambda]\}$ then the local-global principle holds for $\Lambda$, otherwise it doesn't.

Seminal work of Kneser \cite{Kneser1957} gives a very explicit algorithm for computing $\text{Cl}(\Lambda)$, via the notion of $p$-neighbour lattice (also discussed in \cite{gerstein2008basic},\cite{voight2023kneser} and \cite{schulze2020lecture}). The $p$-neighbours of $\Lambda$ are lattices $\Pi$ in the same rational quadratic space that have the same scale as $\Lambda$ and satisfy $\Lambda/(\Lambda\cap \Pi) \cong \Pi/(\Lambda\cap \Pi) \cong \mathbb{Z}/p\mathbb{Z}$. If $\text{rank}(\Lambda)\geq 3$ and $p\nmid 2\,\text{disc}(\Lambda)$ then the $p$-neighbours of $\Lambda$ are explicitly computable and can be shown to lie in $\text{gen}(\Lambda)$ (see \cite{greenbergvoight} for a general statement). This provides a simple way of navigating the genus, by tweaking the local structure of $\Lambda$ at one prime $p$ in a way that can potentially lead to new global isometry classes. A highly satisfying, non-trivial result of Kneser \cite{Kneser1957} is that every isometry class in $\text{Cl}(\Lambda)$ can be reached by a sequence of $p_i$-neighbours for some finite sequence of primes $p_i$. Quite often it suffices to use only a single prime in Kneser's result (e.g.\ if there is only one spinor genus in $\text{gen}(\Lambda)$).

The method of $p$-neighbours has been used extensively to classify isometry classes of lattices in certain genera. For example, in $24$ dimensions it was used by Niemeier in \cite{niemeier1973definite} to classify the $24$ isometry classes of even unimodular lattices and it was used by Borcherds in \cite{borcherds1999leech} to classify the $273$ isometry classes of odd unimodular lattices.

While the $p$-neighbour method is well understood and is highly successful for classifying isometry classes of lattices of low rank and discriminant, other natural questions remain. For example, how do the $p$-neighbours of a lattice $\Pi\in\text{gen}(\Lambda)$ distribute among the isometry classes in $\text{Cl}(\Lambda)$? Alternatively, how are the entries of the adjacency matrices $T_p$ of the directed regular $p$-neighbour graphs $G_p$ on $\text{Cl}(\Lambda)$ distributed? (The directed graphs with vertices corresponding to $\text{Cl}(\Lambda) = \{[\Lambda_1],[\Lambda_2],...,[\Lambda_h]\}$ and directed edges from $[\Lambda_i]$ to $[\Lambda_j]$ for each $p$-neighbour of $\Lambda_i$ that is isometric to $\Lambda_j$). 

Quite often $G_p$ is connected (by Kneser's result, when $p$-neighbours with respect to the prime $p$ suffice to exhaust the class set) and recent work of Chenevier \cite{chenevier2021statistics} proves precise distribution results on the entries of $T_p$ as $p$ gets large. Naturally, one wonders whether more can be said about the structure of the graphs $G_p$, for example via the spectral properties of $T_p$. Birch was the first to explore such questions for $\Lambda$ a rank $3$ lattice in a positive definite rational quadratic space (the most interesting case since the case of indefinite lattices is well understood). In particular, he was able to make a number of startling observations about the relationship between the non-trivial eigenvalues of the matrices $T_p$ and Hecke eigenvalues of classical modular forms of weight $2$ (see \cite{birch1991hecke}, also discussed in \cite{hein2016orthogonal} and \cite{heintornariavoight}). It is worth noting that this connection with modular forms immediately proves that the graphs $G_p$ provide families of Ramanujan graphs, i.e.\ optimal expander graphs. Such graphs have become highly useful across Computer Science, and recently have become useful in Cryptography (e.g. isogeny graphs coming from elliptic curves).

An alternative way to see $p$-neighbour data is by instead dualising and looking at the space of functions: \[\mathcal{M} = \{f:\text{Cl}(\Lambda) \rightarrow\mathbb{C}\}\cong\mathbb{C}^h.\] The space $M$ comes equipped with an action of $p$-neighbour operators $T_p$, which average the values of $f$ over all $p$-neighbours of the input lattice. These operators are then the adjacency matrices considered above and so have the same eigenvalues. Such a space of functions is a special example of a space of (definite) orthogonal modular forms, which in turn is a special example of a space of algebraic modular forms (due to Gross \cite{gross1999algebraic}). Under this guise, the $p$-neighbour operators then become Hecke operators acting on these spaces of automorphic forms. 

Once the above translation is made, one can immediately generalise to spaces of functions on lattices with other symmetries: \[\mathcal{M}(O(V),O(\widehat{\Lambda}), W) = \{f: \text{gen}(\Lambda) \rightarrow W\,|\, f(\gamma \Pi) = \gamma\cdot f(\Pi) \text{ for all } \gamma\in O(V)\}\] where $W$ is a finite dimensional representation of $O(V)$, called the weight. The space $\mathcal{M}$ is then the space $\mathcal{M}(O(V),O(\widehat{\Lambda}),\mathbb{C})$, i.e.\ with the trivial representation $W = \mathbb{C}$ as the weight (hence giving $O(V)$-invariant functions). As before, there are Hecke operators $T_p$ acting on these spaces, via averaging over $p$-neighbours. When viewed in this way, the insights of Birch are immediately interpreted as providing explicit transfer results between Hecke-modules of orthogonal modular forms and classical modular forms, in the spirit of the Jacquet-Langlands correspondence (a special case of Langlands Functoriality).

More recently, there have been a number of papers both extending the work of Birch and proving his claims. Some of these have already been referenced above. The recent paper of Hein-Tornaria-Voight \cite{heintornariavoight} proves a Hecke-equivariant transfer result for a wide range of lattices with non-square half-discriminant. Roughly speaking, if $\Lambda\subset V$ is an even lattice that corresponds via the even Clifford functor to a residually unramified order $\mathcal{O}\subset B$ in a definite quaternion algebra $B$, then they are able to prove a Hecke-equivariant isomorphism between spaces of orthogonal modular forms attached to $\Lambda$ and spaces of classical modular forms. The strength of these results is twofold: 

\begin{itemize}
\item{they treat spaces of orthogonal modular forms of varying weight, showing that other classical modular forms can be computed this way (e.g.\ not just weight $2$),}
\item{the results are entirely explicit, giving a precise and complete description of the image of the transfer in terms of weight, level, newness and Atkin-Lehner eigenvalues (at the bad primes).}
\end{itemize}

It is worth mentioning that other papers have recently been published, in an attempt to further generalise these transfer results to lattices of other ranks. For example: 

\begin{itemize}
\item{the recent thesis of Wu \cite{wu} studies the binary case and relates to Hecke characters,}
\item{an upcoming paper of Assaf-Fretwell-Ingalls-Logan-Secord-Voight studies the quaternary case and relates to Hilbert modular forms,}
\item{the paper of Dummigan-Pacetti-Rama-Tornaria \cite{dummiganpacettiramatornaria} studies the quinary case and relates to Siegel modular forms of paramodular type.}
\end{itemize}  

It is an interesting question to ask how much further these techniques can be pushed for higher ranks. To our knowledge, not much has been written down concretely. For example, there are partial results in \cite{assaffretwellingallslogansecordvoight}.

In this paper we continue to explore Hecke-modules of orthogonal modular forms attached to integral ternary lattices in positive definite rational quadratic spaces, but focus our efforts on the square half-discriminant case. In particular, we consider the simplest cases of half-discriminant $p^2$ for $p$ prime (of which there are five genera, one even one and four odd ones) and half-discriminant $4p^2$ for odd prime $p$ (of which there are twenty-two genera, two even ones and twenty odd ones). In each case  we are either able to prove or conjecture an explicit Hecke-equivariant transfer with a Hecke-module of classical modular forms (in the style of the Jacquet-Langlands correspondence). In each case we try to describe the image of the transfer as precisely as possible. The following theorem gives an example of such a result for one of the genera under consideration (see the Appendix for a full summary table of our results and Section $2$ for notation).

\begin{thm}
Let $\Lambda$ be an even lattice of discriminant $2p^2$ in a positive definite, ternary quadratic space $(V,Q)$ over $\mathbb{Q}$. Then for each $k\geq 0$ and $d\in\{1,p\}$ there exists a Hecke-equivariant isomorphism (for the algebra generated by the Hecke operators $T_q$ with $q\neq p$ prime): \[\mathcal{S}(O(V), O(\widehat{\Lambda}),W_{2k}\otimes \theta_d) \cong S_{2k+2}^{\textup{new}}\left(\Gamma_0(p),\varepsilon^{(p)}_p\varepsilon^{(p)}_d\right).\]
\end{thm}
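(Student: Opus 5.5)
The plan is to follow the strategy announced in the abstract. We reduce the discriminant by a Watson-style transformation that does not change the adelic orthogonal group, and then apply the Hein--Tornaria--Voight transfer to the resulting lattice of non-square half-discriminant. Concretely, for an even ternary $\Lambda$ of discriminant $2p^2$ (half-discriminant $p^2$), the localisation $\Lambda_p$ has a Jordan decomposition of the form $\langle u_1\rangle \perp p\langle u_2\rangle\perp p\langle u_3\rangle$ or similar. First classify the possible $p$-adic Jordan forms compatible with $V$ being definite and the genus being even. Then define
\[\Lambda' = \{x\in\Lambda : B(x,\Lambda)\subseteq p\mathbb{Z}\}\]
(or $\Lambda'=\Lambda\cap p\Lambda^\#$), rescaled by $1/p$. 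Locally at $p$ this turns $\langle u_1\rangle\perp p\langle u_2,u_3\rangle$ into $\langle u_2,u_3\rangle\perp p\langle u_1\rangle$, i.e.\ a lattice of half-discriminant $p$. Away from $p$ nothing changes. Because the construction is canonical in $\Lambda$, it commutes with every isometry of $V$ (and of $V_\ell$ for all $\ell$). Hence $O(\widehat{\Lambda}) = O(\widehat{\Lambda'})$ inside $O(\widehat V)$, where $V'$ is $V$ with the form scaled by $1/p$, so $O(V')=O(V)$. This gives a Hecke-equivariant identification
\[\mathcal{S}(O(V),O(\widehat\Lambda),W_{2k}\otimes\theta_d)=\mathcal{S}(O(V'),O(\widehat{\Lambda'}),W_{2k}\otimes\theta_d)\]
for all $T_q$ with $q\neq p$. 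Care is needed to check that the $p$-neighbour operators (or the adelic double cosets defining $T_q$) match under the rescaling; the scaling is a similitude of $\mathbb{Q}$-spaces, so this holds.

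Next, identify $\Lambda'$. It is an even ternary lattice of discriminant $2p$ in a definite space. Via the even Clifford functor it corresponds to a maximal order $\mathcal{O}$ in the definite quaternion algebra $B_{p,\infty}$, which is residually unramified. The Hein--Tornaria--Voight theorem then gives, for each $k$ and each character $\theta_d$ with $d\mid p$, an isomorphism of the orthogonal cusp forms with $S_{2k+2}^{\text{new}}(\Gamma_0(p))$, cut out by an Atkin--Lehner sign at $p$. This sign is determined by $d$ and the discriminant/Hasse data of $V$.

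The remaining issue is to translate their sign conditions into the notation $\varepsilon_p^{(p)}\varepsilon_d^{(p)}$. The Watson transformation may also twist the spinor-norm character at $p$, which would swap $d=1$ with $d=p$. We must therefore track how $\theta_p$ on $O(V_p)$ pulls back under the local identification $O(\Lambda_p)=O(\Lambda'_p)$. Since the identification is an equality of groups inside $O(V_p)$ and the spinor norm of $V'$ differs from that of $V$ only by the scaling, the characters agree up to a factor determined by $p$. This is the source of the extra $\varepsilon^{(p)}_p$.

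I expect the main obstacle to be this sign bookkeeping. We need to pin down the exact Atkin--Lehner eigenvalue and verify it against the HTV description, including checking that the $p$-adic Jordan form of $\Lambda$ is unique for the genus, so that $\Lambda'$ is indeed the maximal-order lattice rather than a non-maximal one. I would settle it first by a local computation of spinor norms on the explicit diagonal form. As a sanity check, I would compare the dimensions for small $p$ and $k$ against the trace formula for $S^{\text{new}}_{2k+2}(\Gamma_0(p))$ split by Atkin--Lehner sign.
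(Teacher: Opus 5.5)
Your $\Lambda'=\Lambda\cap p\Lambda^{\#}$, rescaled by $1/p$, is exactly the paper's $\Pi=V\cap\hat{\psi}_{(p)}(\widehat{\Lambda})$: for a diagonal basis $e_1,e_2,e_3$ of $\Lambda_p\sim\langle a,p,pb\rangle$ it has local basis $pe_1,e_2,e_3$. The rest of your route is also the paper's: a maximal order in $B_{p,\infty}$, then Hein--Tornaria--Voight with $N=N^-=D=p$. Two steps, however, do not go through as written.

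\textbf{Equality of orthogonal groups.} Canonicity of $\Lambda\mapsto\Lambda'$ only gives $O(\Lambda_p)\subseteq O(\Lambda'_p)$, not equality. A canonical construction can strictly enlarge the stabiliser; passing to the even sublattice in the paper's Genus~$3$ discussion is an example. You must supply the reverse inclusion.

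The paper gets it from Allcock's theorem that $O(\Lambda_p)$ is maximal when $\Lambda_p$ is elementary (Proposition~\ref{p-mod}). Within your framework there is a more elementary fix. With $\Lambda_p=\mathbb{Z}_pe_1\perp\mathbb{Z}_pe_2\perp\mathbb{Z}_pe_3$ and Gram matrix $\langle a,p,pb\rangle$, one computes $(\Lambda'_p)^{\#}=p^{-1}\Lambda_p$. So $\Lambda_p$ is the dual of $\Lambda'_p$ for the rescaled form $Q^{(1/p)}$. It is therefore canonically recovered from $\Lambda'_p$, which gives $O(\Lambda'_p)\subseteq O(\Lambda_p)$.

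This argument uses the Jordan form essentially. So the classification you postpone is needed for this reversibility, not for maximality of the order, which is automatic from $\mathrm{discrd}(\mathcal{O})=p$. The classification is quick: evenness forces $\Lambda_2\sim\mathbb{H}\perp\langle -2\rangle$ with $S(V_2)=-1$. Hence $S(V_p)=-1$, which excludes the $p^2$-modular Jordan forms, where this argument breaks down.

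\textbf{Sign bookkeeping.} Your treatment of the signs is misdirected: there is no twist. The characters $\theta_d$ are defined globally on $O(V)$ through $\mathrm{nrd}$ on $\mathrm{Cl}^0(V)^\times/\mathbb{Q}^\times$. Rescaling $Q$ leaves $\mathrm{Cl}^0(V)$ unchanged up to isomorphism. It also multiplies the spinor norm of an element of $SO(V)$, which is a product of an even number of reflections, by a square. So $\theta_d$ for $(V,Q^{(1/p)})$ is the same character as for $(V,Q)$.

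Since $O(\widehat{\Lambda})=O(\widehat{\Lambda'})$, the two spaces with weight $W_{2k}\otimes\theta_d$ are literally equal, with the same $d$. The factor $\varepsilon^{(p)}_p$ is simply $\varepsilon^{(N)}_D$ in Theorem~\ref{thm: 6.12 in HTV} with $D=\mathrm{disc}(B)=p$. The parity condition there forces $M=p$, and $p$-new at level $p$ is just new.

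If you also inserted a twist exchanging $d=1$ and $d=p$, you would double count and end up with $\varepsilon^{(p)}_d$ alone. That is wrong: in the paper's $p=37$ example, trivial weight sees only the form with $w_{37}=-1$.
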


Such theorems cannot immediately be proved using the results of Hein-Tornaria-Voight since the associated quaternion orders are residually ramified. Our general strategy in proving such a result (when we are able to) is to instead perform certain Watson-style transformations to $\Lambda$, producing a new even lattice $\Pi$ of discriminant $2p$ while preserving the adelic orthogonal group (see \cite{watson1953some}, \cite{watson1963class}). This is enough to prove a bijection between $\text{Cl}(\Lambda)\rightarrow\text{Cl}(\Pi)$ that respects the $p$-neighbour relation, hence providing a Hecke-equivariant isomorphism between the associated Hecke-modules of orthogonal modular forms. One can then appeal to the above-mentioned result of Hein-Tornaria-Voight to get the required Hecke-equivariant transfer to classical spaces of modular forms. 

In terms of spaces of orthogonal modular forms, one can view the application of such transformations as lowering the level of the forms, hence also shedding light on the newform theory for spaces of orthogonal modular forms. In particular, the theorem above can be interpreted as saying that various spaces of orthogonal modular forms attached to $\Lambda$ consist entirely of oldforms (since the RHS also corresponds to a certain space of orthogonal modular forms associated to the lattice $\Pi$ having lower discriminant $2p$).

\section*{Acknowledgements}
This work forms part of the PhD thesis of the first named author. We thank Jessica Jay for helpful suggestions and comments regarding the general exposition of the paper and John Voight for useful advice concerning residually ramified orders in definite quaternion algebras. This paper and the work contained within was written completely without AI assistance.

\section{Setup and Notation}

\subsection{Quadratic Spaces and Lattices}

Let $(V,Q)$ be a quadratic space over a field $F$ with $\text{char}(F)\neq 2$, i.e.\ an $F$-vector space $V$ equipped with a map $Q: V \rightarrow F$ such that: \begin{itemize}
\item{$Q(\alpha v) = \alpha^2 Q(v)$ for all $\alpha\in F$ and $v\in V$,}
\item{the map $T(v,w) := Q(v+w) - Q(v) - Q(w)$ is bilinear.}
\end{itemize} Note that as a consequence of our definition we relate the quadratic map and the symmetric bilinear form via $T(v,v) = 2Q(v)$. We will also assume from now on that $n = \text{dim}_{\mathbb{Q}}(V) < \infty$.

Given any basis $\{e_1,e_2,...,e_n\}$ for $V$ we define the corresponding Gram matrix $T = (T(e_i,e_j))_{1\leq i, j\leq n}$ and the discrimimant $\mathrm{disc}(V) = [\text{det}(T)]\in F/F^{\times 2}$. We say that $(V,Q)$ is non-degenerate if $\mathrm{disc}(V) \neq 0$.

An isometry between quadratic spaces $(V_1,Q_1)$ and $(V_2,Q_2)$ over $F$ is an isomorphism $\phi: V_1\rightarrow V_2$ such that $Q_2(\phi(v)) = Q_1(v)$ for all $v\in V_1$. If an isometry exists, we will write $(V_1,Q_1)\sim (V_2,Q_2)$. We will also write $(V,Q)\sim \langle A_1,A_2,...,A_m\rangle$ to signify that $(V,Q)$ is isometric to a quadratic space whose Gram matrix under some basis is block diagonal with blocks $A_i$ (with lower case $a_i$ being reserved for the special case of a $1\times 1$ block). The dimension $n$ and the discriminant $\mathrm{disc}(V)$ are isometry invariants of the quadratic space.

It is very well known that every quadratic space $(V,Q)$ over $F$ can be diagonalised, so that $(V,Q)\sim \langle a_1,a_2,...,a_n\rangle$ for some $a_i\in F$. The discriminant is then given by the product $\mathrm{disc}(V) = [a_1a_2...a_n]$.

The set of self-isometries of a quadratic space $(V,Q)$ over $F$ gives the orthogonal group of $V$: \[O(V) = \{\phi\in\text{GL}(V)\,|\, Q(\phi(v)) = Q(v)\}.\] When $(V,Q)$ is non-degenerate every element of $O(V)$ has determinant $\pm 1$ and the kernel of the determinant map is the special orthogonal group $SO(V)$. This gives an isomorphism $O(V)/SO(V) \cong \{\pm 1\}$ and when $n$ is odd the non-trivial coset is represented by $-I$.

If $(V,Q)$ is a quadratic space over $F$ and $\lambda\in F$ then we may scale the quadratic map by $\lambda$ to get the quadratic space $(V, Q^{(\lambda)})$ with quadratic map $Q^{(\lambda)}:  V \to F$ given by $Q^{(\lambda)}(v) := \lambda Q(v)$ for all $v\in V$. The associated symmetric bilinear form is then $T^{(\lambda)}(v,w) = \lambda T(v,w)$. If $T$ is a Gram matrix for $(V,Q)$ under some basis then the Gram matrix of $(V,Q^{(\lambda)})$ under the same basis is $\lambda T$. Note that this notion of scaling is not the same as scaling a basis of $V$ by $\lambda$, which would produce a quadratic space with Gram matrix $\lambda^2 T$.

From now on let $(V,Q)$ be a non-degenerate quadratic space over $\mathbb{Q}$ of dimension $n$. Attached to $V$ are various completed quadratic spaces $(V_{\infty},Q)$ and $(V_p,Q)$ for $p$ prime (these being quadratic spaces over $\mathbb{R}$ and $\mathbb{Q}_p$ respectively). The underlying vector spaces are: \begin{align*}V_{\infty} &= V\otimes_{\mathbb{Q}}\mathbb{R}\\ V_p &= V\otimes_{\mathbb{Q}}\mathbb{Q}_p \end{align*} and the quadratic map $Q$ naturally extends to these spaces via $Q(v\otimes \alpha) = \alpha^2Q(v)$. 

The notion of isometry extends naturally to local quadratic spaces, and the celebrated Hasse-Minkowski theorem (Chapter $6$ of \cite{cassels2008rational}) tells us that the global isometry class of $(V,Q)$ is determined by the local isometry classes of $(V_{\infty},Q)$ and $(V_p,Q)$. The classification of local quadratic spaces is known, and consequently the global isometry class of $(V,Q)$ is fully determined by the following list of invariants:

\begin{itemize}
\item{The discriminant $\mathrm{disc}(V)\in\mathbb{Q}^{\times}/\mathbb{Q}^{\times 2}$,}
\item{The signature $(r,s)$ of $(V_{\infty},Q)$, the unique integers $r,s\geq 0$ such that: \[(V_{\infty},Q) \sim \langle 1,1,...,1,-1,-1,...,-1\rangle,\] with $r$ occurrences of $1$ and $s$ occurrences of $-1$ (noting that $n = r+s$).}
\item{The Hasse invariant $S(V_p)\in\{\pm 1\}$ of $(V_p,Q)$, given by: \[S(V_p) = \prod_{1\leq i<j\leq n}(a_i,a_j)_p,\] where \[(V_p,Q) \sim \langle a_1,a_2,...,a_n\rangle\] is any diagonalisation and $(a,b)_p$ is the Hilbert symbol (when $n = 1$ we declare $S(V_p) = 1$).}
\end{itemize} The local Hasse invariants satisfy certain compatibility conditions (the second of which follows from Hilbert Reciprocity): 

\begin{itemize}
\item{$S(V_p) = 1$ for all but finitely many $p$}
\item{$\prod_{p\leq \infty}S(V_p) = 1$ (where $S(V_{\infty}) = (-1)^{\frac{s(s-1)}{2}}$).}
\end{itemize}

Conversely, suppose these invariants are chosen freely to satisfy the compatibility conditions. Then there exists a rational quadratic space giving rise to these invariants, with only two exceptions: 

\begin{itemize}
\item{$n=1$ and $S(V_p) = -1$ for some finite prime $p$,}
\item{$n=2$, $\mathrm{disc}(V_p) = [-1]$ and $S(V_p) = -1$ for some finite prime $p$.}
\end{itemize}

The completed spaces come equipped with local orthogonal groups: \begin{align*}O(V_{\infty}) &= \{\phi\in\text{GL}(V_{\infty})\,|\, Q(\phi(v)) = Q(v)\}\\ O(V_p) &= \{\phi\in\text{GL}(V_p)\,|\, Q(\phi(v)) = Q(v)\}\end{align*}

If $V$ is positive definite, i.e.\ it has signature $(r,s) = (n,0)$, then $O(V_{\infty}) \cong O_n(\mathbb{R})$ is the standard compact orthogonal group.

A lattice $\Lambda\subset V$ is a free $\mathbb{Z}$-module of rank $n$. Every basis of $\Lambda$ gives rise to a basis for $V$ and a Gram matrix $T$. The discriminant of $\Lambda$ is then $\mathrm{disc}(\Lambda) = [\text{det}(T)]\in\mathbb{Q}^{\times}/(\mathbb{Z}^{\times})^2 = \mathbb{Q}^{\times}$.

If $\Lambda_1\subset V_1$ and $\Lambda_2\subset V_2$ are lattices then an isometry is an isometry $\phi: V_1 \rightarrow V_2$ such that $\phi(\Lambda_1) = \Lambda_2$. The rank $n$ and discriminant $\mathrm{disc}(\Lambda)$ are both isometry invariants of a lattice. The self-isometries of $\Lambda$ give the orthogonal group of $\Lambda$: \[O(\Lambda) = \{\phi\in O(V)\,|\, \phi(\Lambda) = \Lambda\}.\]

If $\Lambda$ satisfies $T(x, y) \in \mathbb{Z}$ for all $x, y \in \Lambda$ then we call $\Lambda$ integral, and if further we have that the scale ideal is equal to $\mathbb{Z}$, i.e.\ $\mathfrak{s}(\Lambda) = \langle T(x,y)\,|\, x,y\in\Lambda \rangle = \mathbb{Z}$ then it is primitive. A primitive lattice is even if $Q(x) \in  \mathbb{Z}$ for all $x \in \Lambda$, otherwise it is odd. We define the norm ideal of $\Lambda$ to be the ideal $\mathfrak{n}(\Lambda) = \langle Q(x)\,|\, x\in\Lambda\rangle \subset\mathbb{Q}$.

One defines the discriminant group of $\Lambda$ to be $\Delta(\Lambda) = \Lambda^{\ast}/\Lambda$, where the dual lattice of $\Lambda$ with respect to $T$ is $\Lambda^{\ast}:= \{v \in V \,:\, T(v, \Lambda) \subseteq \mathbb{Z}\}$. Whenever $\Delta(\Lambda)$ is a direct sum of cyclic groups of prime order for an integral lattice, then the lattice is called elementary.

As with quadratic spaces, lattices can be completed to give local lattices for $p$ prime: \begin{align*}\Lambda_{\infty} &= \Lambda\otimes_{\mathbb{Z}}\mathbb{R} = V_{\infty}\\ \Lambda_p &= \Lambda\otimes_{\mathbb{Z}}\mathbb{Z}_p\subset V_p\end{align*} One might hope that this collection of local lattices completely determines $\Lambda$ up to isometry, but the analogue of the Hasse-Minkowski theorem fails. To quantify the failure, one considers the genus and class set: \begin{align*}\text{gen}(\Lambda) &= \{\Pi\subset V\,|\, \Pi_p \sim\Lambda_p \text{ for all } p\}\\ \text{Cl}(\Lambda) &= O(V)\backslash\text{gen}(\Lambda)\end{align*} The class set is known to be a finite set by a geometry of numbers argument, and its size $h = |\text{Cl}(\Lambda)|$ is the class number of $\Lambda$.

We can still define the notions of dual lattice, discriminant group, integral lattice, primitive lattice, scale ideal and norm ideal in the local lattice case by replacing $\mathbb{Z}$ with $\mathbb{Z}_p$. The notion of even lattice can also be defined over $\mathbb{Z}_2$ in the obvious way.

Since one direction of Hasse-Minkowski for lattices holds (i.e.\ global to local), it's still useful to have a classification of local lattices up to isometry (treated for example in Gerstein \cite{gerstein2008basic}). Indeed, for every prime $p$ it is known that $\Lambda_p$ has a Jordan decomposition, i.e.\ $\Lambda_p = \Lambda_{p,1} \perp \dots \perp  \Lambda_{p,k}$ where each $\Lambda_{p,i}$ has rank $n_i$ and is $p^{a_i}$-modular for some $a_i\in\mathbb{Z}$, i.e.\ has Gram matrix $p^{a_i} U_i$ for some $U_i\in\text{GL}_{n_i}(\mathbb{Z}_p)$. We usually rearrange the $\Lambda_{p,i}$ so that $a_1 < a_2 < ... < a_k$ and refer to the case $a_i = 0$ as being a unimodular lattice.

When $p$ is odd it is known that $\Lambda_{p,i}\sim p^{a_i}\langle 1,1,...,1,\delta_i\rangle$ for a unique $\delta_i\in\{1,r\}$, where $r$ is a quadratic non-residue mod $p$ (we will use this notation later). The collection of invariants $(n_i, a_i, \delta_i)_i$ completely determines $\Lambda_p$ up to isometry and all possibilities can occur (completing the classification of local lattices in this case). We will often use Conway-Sloane $p$-adic symbols to denote isometry classes of local lattices (see \cite[Chapter 15]{conway2013sphere} or \cite{allcock2020conway} for details). 

When $p=2$ it is not necessarily true that $\Lambda_{2,i}$ can be diagonalised. However it is known that: \[\Lambda_{2,i} \sim\begin{cases} 2^{a_i}\langle \delta_1,\delta_2,...,\delta_n\rangle & \text{ if } \mathfrak{n}(\Lambda_{2,i}) = 2^{a_i-1}\mathbb{Z}_2\\ 2^{a_i}\left\langle\begin{pmatrix}0 & 1\\ 1 & 0\end{pmatrix},\begin{pmatrix}0 & 1\\ 1 & 0\end{pmatrix},...,\begin{pmatrix}0 & 1\\ 1 & 0\end{pmatrix},A\right\rangle & \text{ if } \mathfrak{n}(\Lambda_{2,i}) = 2^{a_i}\mathbb{Z}_2 \end{cases},\] where each $\delta_i \in\{\pm 1,\pm 5\}$ and $A\in\left\{\begin{pmatrix}0 & 1\\ 1 & 0\end{pmatrix},\begin{pmatrix}2 & 1\\ 1 & 2\end{pmatrix}\right\}$. The lattices of the first kind are often referred to as Type I lattices and the second as Type II lattices. 

Unfortunately, the Jordan decomposition is not enough to classify $2$-adic lattices up to isometry. However, it is still possible to complete the classification by giving further equivalences, for example by the result of O'Meara (e.g.\ Theorem $8.11$ of \cite{gerstein2008basic}). We will often use Conway-Sloane $2$-adic symbols to denote isometry classes of local lattices up to isometry, and assume the result that two $2$-adic lattices are isometric if and only if their symbols are equal or related by a sequence of Oddity Fusion and Sign Walking rules (see \cite[Chapter 15]{conway2013sphere} or \cite{allcock2020conway} for details).

As a consequence of the classification of local lattices, we find that when $\Lambda, \Pi \subset V$ are integral lattices with $ \mathrm{disc}(\Lambda) = \mathrm{disc}(\Pi)$, then $\Lambda_p \simeq \Pi_p$ for all $p \nmid 2\,\mathrm{disc}(\Lambda)$ (since the local lattice must be diagonalisable and unimodular, hence completely determined by its discriminant).

As with quadratic spaces, we often wish to know when a collection of local lattices $\{\Lambda_p\}_{p\leq \infty}$ arises from a global lattice $\Lambda$. By Lemma $5.1$ of Chapter $9$ of Cassels \cite{cassels2008rational}, $\Lambda$ will exist if and only if the corresponding quadratic space $(V,Q)$ in which it embeds exists (relative to the collection of local quadratic spaces $(V_p,Q)$ for each $p\leq \infty$). This is simply a case of checking that the global compatibility conditions mentioned earlier are satisfied.

Kneser showed in \cite{Kneser1957} that it is often possible to compute the class set $\text{Cl}(\Lambda)$ via his method of iterated $p$-neighbours. Let $p$ be prime and $\Lambda$ be an integral lattice in some quadratic space $(V,Q)$ over $\mathbb{Q}$. We say that an integral lattice $\Pi \subset V$ is a $p$-neighbour of $\Lambda$ if $\Lambda/(\Lambda \cap \Pi) \cong \Pi/(\Lambda\cap\Pi) \cong \mathbb{Z}/p\mathbb{Z}$, i.e.\ the intersection has index $p$ in both $\Lambda$ and $\Pi$. We write $\Lambda\sim_p \Pi$ to denote that $\Pi$ is a $p$-neighbour of $\Lambda$. 

It is known that if $\Pi$ is a $p$-neighbour of $\Lambda$ then $\mathrm{disc}(\Lambda) = \mathrm{disc}(\Pi)$ and $\Pi_q = \Lambda_q$ for all primes $q \neq p$. Further, when $p \nmid 2\,\mathrm{disc}(\Lambda)$ then $\Lambda_p \sim \Pi_p$, since both are local lattices that are unimodular with the same discriminant. In this case we then have that $\Pi \in \text{gen}(\Lambda)$, so that $p$-neighbour moves allow us to navigate the genus of $\Lambda$ in a way that can generate new isometry classes. In particular, Kneser showed that if $(V,Q)$ is a non-degenerate quadratic space over $\mathbb{Q}$ of dimension $n\geq 3$, then for any two classes $[\Lambda_1], [\Lambda_2] \in \text{Cl}(\Lambda)$ there exists a finite set of primes $p_1, p_2,\dots, p_s$ and integral lattices $\Pi_i\subset V$ such that:
$$\Lambda \sim_{p_1} \Pi_1 \sim_{p_2} \dots \sim_{p_s} \Pi_s = \Lambda_i.$$ 
The lattices considered in this paper have only one spinor genus, and in this case it is known that we may take $p_i = p$ for all $i$, for any fixed prime $p\nmid 2\,\mathrm{disc}(\Lambda)$. 

As explained in \cite{voight2023kneser}, when $p\nmid 2\,\mathrm{disc}(\Lambda)$ and $n\geq 3$ the $p$-neighbours of $\Lambda$ are in bijection with isotropic lines in $\Lambda/p\Lambda$, and so are easy to compute. In particular, if $[v]\in\Lambda/p\Lambda$ generates an isotropic line then the corresponding $p$-neighbour is: \[\Pi = \mathbb{Z}\left(\frac{v}{p}\right) + \{w\in \Lambda\,|\,T(w,v)\in p\mathbb{Z}\}.\] In particular, the number of $p$-neighbours $N(p)$ is finite. When $n = 3$ we have $N(p) = p+1$ (the number of solutions to the quadric $Q(x) \equiv 0 \bmod p$ in $\mathbb{P}^2(\mathbb{F}_p)$).

Both the genus and the class set of $\Lambda$ have adelic interpretations. Consider the group of adelic points of the global orthogonal group $O(V)$ with respect to the local orthogonal groups $O(\Lambda_p)$, i.e.\: 
$$O(\widehat{V}) = \{(g_p)_p \in \prod_pO(V_p):g_p \in O(\Lambda_p) \text{ for all but finitely many primes } p\}.$$ Then $O(\widehat{V})$ acts transitively on $\text{gen}(\Lambda)$ and the stabiliser of $\Lambda$ is $O(\widehat{\Lambda}) = \prod_p O(\Lambda_p)$. Hence there are bijections: \begin{align*}\text{gen}(\Lambda) &\longleftrightarrow O(\widehat{V})/O(\widehat{\Lambda})\\ \text{Cl}(\Lambda) &\longleftrightarrow O(V)\backslash O(\widehat{V})/O(\widehat{\Lambda}) \end{align*}

As discussed in Hein \cite[Proposition 3.1.12]{hein2016orthogonal}, there is an adelic description of $p$-neighbours. If $p \nmid 2\, \mathrm{disc}(\Lambda)$ and $n\geq 3$ then $V_p$ is isotropic and we can fix a basis for $V_p$ so that $V_p = \mathbb{H} \perp \mathbb{H}^{\perp}$, with $\mathbb{H}\sim\left\langle\begin{pmatrix}0 & 1\\ 1 & 0\end{pmatrix}\right\rangle$ a hyperbolic plane. We can then construct the element $\widehat{p} \in O(\widehat{V})$ such that \[\widehat{p}_{(q)} = \begin{cases}
    \left(\begin{smallmatrix}
    p&0&0&\dots&0\\
    0& \frac{1}{p}&0&\dots&0\\
    0&0&1&\dots&0\\
    \vdots &&\ddots&\ddots&\\
    0&\dots&\dots&0&1
\end{smallmatrix}\right),&\text{if } q = p\\
I_n,&\text{if } q \neq p \end{cases}\] The $p$-neighbours of $\Lambda$ are then shown to be in bijection with coset representatives in the decomposition:
\[
O(\widehat{\Lambda})\, \widehat{p}\, O(\widehat{\Lambda}) = \bigsqcup_{m=1}^{N(p)} \widehat{p}_m O(\widehat{\Lambda}),
\]

The theory of $2$-neighbours is a little trickier to handle due to the fact that the local classification is more involved. However, when $\Lambda$ is even and $v_2(\mathrm{disc}(\Lambda)) \leq 1$ (which will often be the case of interest in this paper), the theory as described above will still work for $p=2$. We refer the reader to \cite{schulze2020lecture} for more information.

\subsection{Ternary Lattices and Quaternion orders}\label{Ternary Lattices and Quaternion orders}

In this subsection we recall the Clifford correspondence between ternary lattices and quaternion orders. We assume that the reader is familiar with the theory of quaternion algebras and their orders. A thorough discussion of these topics is given in \cite{voight2021quaternion}.

Let $\Lambda = \mathbb{Z}e_1\oplus \mathbb{Z}e_2\oplus \mathbb{Z}e_3$ be an even ternary lattice in a quadratic space $(V,Q)$ over $\mathbb{Q}$. Suppose that the Gram matrix of $\Lambda$ in this basis is \[T = \begin{pmatrix} 2a & u & v\\ u & 2b & w\\ v & w & 2c \end{pmatrix}.\] The even Clifford algebra of $V$ is the associative $\mathbb{Q}$-algebra given by: \[B = \mathrm{Cl}^0(V) = \mathbb{Q}\oplus \mathbb{Q}e_1e_2 \oplus \mathbb{Q}e_2e_3\oplus \mathbb{Q} e_1e_3,\] with relations $e_i^2 = Q(e_i)$ and $e_ie_j + e_je_i = T(e_i,e_j)$. It is known that $B$ is a quaternion algebra and is definite if and only if $(V,Q)$ is positive definite. 

The lattice $\Lambda\subset V$ gives rise to an order of discriminant $\mathrm{discrd}(\mathcal{O}) = \frac{1}{2}\mathrm{disc}(\Lambda)$: \[\mathcal{O} = \mathrm{Cl}^0(\Lambda) = \mathbb{Z} \oplus \mathbb{Z}e_1e_2 \oplus \mathbb{Z}e_2e_3 \oplus \mathbb{Z}e_1e_3\subset B.\] Note that it is important that $\Lambda$ is even, since otherwise $\mathrm{Cl}^0(\Lambda)$ may not be closed under multiplication. If $\Lambda$ is not even then we may instead work with a suitable scaling of the quadratic map (although altering the discriminant). 

We wish to give a standard basis for the quaternion algebra $B$. In order to do this we work out the Gram matrix of the trace form $\text{trd}(\alpha\bar{\beta})$ on the basis given above to get the matrix: \[A = \begin{pmatrix}2 & u & w & v\\ u & 2ab & uw-bv & aw\\ w & uw-bv & 2bc & cu\\ v & aw & cu & 2ac\end{pmatrix}.\]

The matrix: \[M = \begin{pmatrix}1 & 0 & 0 & 0\\ -u & 2 & 0 & 0\\ bv-uw & w & u & -2b\\ u^2w-buv-2abw & 2bv-uw & 4ab-u^2 & 0\end{pmatrix}\] is then such that $D = MAM^T$ is the diagonal matrix: \[D = 
\begin{pmatrix}2 & 0 & 0 & 0\\ 0 & 2(4ab-u^2) & 0 & 0\\ 0 & 0 & 2bN & 0\\ 0 & 0 & 0 & 2bN(4ab-u^2)
\end{pmatrix}.\] It is now clear that $B = \left(\frac{u^2-4ab\,,\,-bN}{\mathbb{Q}}\right)$ under the basis $\{1,i,j,k\}$ with: \begin{align*}
    i & = 2e_1e_2-u,\\ j &= we_1e_2 + ue_2e_3 - 2be_1e_3 + (bv-uw),\\ k &= (2bv-uw)e_1e_2 + (4ab-u^2)e_2e_3 + (u^2w-buv-2abw).
\end{align*} (It is easily checked that the standard relations $i^2 = u^2-4ab, j^2 = -bN$ and $ij=k=-ji$ are satisfied).\\
Inverting the change of basis matrix $M$ gives: \[M^{-1} = \begin{pmatrix}1 & 0 & 0 & 0\\ \frac{u}{2} & \frac{1}{2} & 0 & 0\\ \frac{w}{2} & \frac{uw-2bv}{8ab-2u^2} & 0 & \frac{1}{4ab-u^2}\\ \frac{v}{2} & \frac{2aw-uv}{8ab-2u^2} & -\frac{1}{2b} & \frac{u}{8ab^2-2bu^2}
\end{pmatrix}\] so that \begin{align*}e_1e_2 &= \frac{u+i}{2}\\ e_2e_3 &= \frac{w(4ab-u^2) + (uw-2bv)i + 2k}{8ab-2u^2}\\ e_1e_3 &= \frac{bv(4ab-u^2) + b(2aw-uv)i - (4ab-u^2)j + uk}{8ab^2-2bu^2}\end{align*} so that in terms of the standard basis $\{1,i,j,k\}$ we have: \small \[\mathcal{O} = \mathbb{Z}\oplus\mathbb{Z}\frac{u+i}{2}\oplus\mathbb{Z}\frac{w(4ab-u^2) + (uw-2bv)i + 2k}{8ab-2u^2}\oplus \mathbb{Z}\frac{bv(4ab-u^2) + b(2aw-uv)i - (4ab-u^2)j + uk}{8ab^2-2bu^2}.\] 
\normalsize 
It is easily checked that the Gram matrix of this order is equivalent to the one above and that $\mathrm{discrd}(\mathcal{O}) = \frac{1}{2}\mathrm{disc}(\Lambda)$ under this basis, as expected.

The association $\Lambda \rightarrow\mathrm{Cl}^0(\Lambda)$ has been studied many times historically and can be viewed as a generalisation of Gauss' correspondence between integral binary quadratic forms and ideals in quadratic rings. It has many interesting functorial properties, which are explored in Chapter $22$ of \cite{voight2021quaternion}. In particular, the correspondence is functorial in the underlying rings, so that we may compatibly extend the recipe to connect local lattices and local orders. Functoriality implies the existence of a bijection $\text{Cl}(\Lambda) \longleftrightarrow\text{Typ}(\mathcal{O})$ that identifies isometries on the LHS with isomorphisms on the RHS (where $\text{Typ}(\mathcal{O})$ is the set of orders in $B$ that are locally isomorphic to $\mathcal{O}$ everywhere, up to global isomorphism).

\subsection{Classical Modular Forms} \label{subsec: classical MFs}
We assume that the reader is familiar with the theory of modular forms and Hecke operators (see \cite{diamond2005first} and \cite{cohen2017modular} for a detailed introduction). We denote by $S_k(\Gamma_0(N))$ the space of cusp forms of weight $k$ and level $N$, and $T_p$ the usual Hecke operator at $p$ acting on this space. Recall that $S_k(\Gamma_0(N))$ has a basis of eigenforms for the algebra generated by the Hecke operators $T_p$ for $p\nmid N$ (we will not need to consider the other Hecke operators in this paper).

The space of oldforms is given by:
\begin{align*}
S_{k}^{\text{old}}\left(\Gamma_0(N)\right) = \sum_{\substack{M \mid N \\M\neq N}} \sum_{d \mid \tfrac{N}{M}} \alpha_d(S_{k}(\Gamma_0(M))).
\end{align*} where $\alpha_d(f)(\tau) = f(d\tau)$ are the degeneracy maps. The new subspace is then defined via the orthogonal direct sum decomposition (with respect to the Petersson inner product): \[S_k(\Gamma_0(N)) = S_k^{\text{old}}\left(\Gamma_0(N)\right) \oplus S_k^{\textup{new}}\left(\Gamma_0(N)\right)\] More generally, if $M|N$ then the $M$-new subspace is given by: \[S_k^{M\mathrm{-new}}\left(\Gamma_0(N)\right) = \displaystyle\sum_{M\mid M^{\prime}\mid N} \sum_{d \mid \frac{N}{M^{\prime}}} \alpha_d(S_k^{\textup{new}}(\Gamma_0(M^{\prime})).\]

We also recall that there are additional involutions acting on the spaces $S_k(\Gamma_0(N))$. For each $d$ where $\gcd(d,N/d) = 1$, the Atkin-Lehner involution $W_d$ acts on $S_{k}\left(\Gamma_0(N)\right)$ via $f \mapsto f|_{k}[W_d]$ where:    	
\begin{align*}
    W_d = \left(\begin{smallmatrix}
        dx & y \\
        Nz & d
    \end{smallmatrix}\right) \, \text{ for integers $x,y,z$ such that } dx - \tfrac{Nyz}{d} = 1.
\end{align*}
Here the weight $k$ slash operator is given by $(f|_{k} [\gamma])(\tau) = \text{det}(\gamma)^{k-1}(c\tau + d)^{-k} f(\gamma \cdot \tau)$ with $\gamma\cdot \tau$ being the Mobius action of $\gamma = \begin{pmatrix}a & b\\ c & d\end{pmatrix}\in\text{GL}_2^+(\mathbb{Q})$ on $\tau\in\mathcal{H}$ (complex upper half plane). 

In order to keep track of Atkin-Lehner eigenvalues, the following notation from \cite{heintornariavoight} will be useful. Let $N$ and $d$ be positive integers. Then the corresponding sign vector is $\varepsilon^{(N)}_d= (\eta_p)_{p^e\mid \mid N}$ where \[\eta_p = \begin{cases}+1 & \text{ if } p\nmid d\\ -1 & \text{ if } p\mid d\end{cases}.\]
Sign vectors will later be multiplied component-wise.

For each sign vector $\varepsilon^{(N)}_d$, we have eigenspaces of the form
\begin{align*}\label{eq:neweps}
  S_{k}\!\left(\Gamma_0(N), \varepsilon^{(N)}_d\right)
  \;:=\;
  \Bigl\{
    f \in S_{k}\!\left(\Gamma_0(N)\right)
    :
    W_q f = \eta_p f
    \text{ for all } q = p^e \| N
  \Bigr\}.
\end{align*} Since the Atkin-Lehner involutions $W_d$ commute with degeneracy operators and the Hecke operators $T_p$ when $p \nmid N$ and $\gcd(p,d) = 1$, this induces decompositions of Hecke-modules for each $M|N$:  \[
  S^{M\mathrm{-new}}_{k}\!\left(\Gamma_0(N)\right)
  \;=\;
  \bigoplus_{\varepsilon^{(N)}_d}
  S^{M\mathrm{-new}}_{k}\!\left(\Gamma_0(N), \varepsilon^{(N)}_d\right).
\] where $\varepsilon_d^{(N)}$ runs over all possible sign vectors corresponding to divisors $d \,|\, N$.

\subsection{Orthogonal Modular Forms}\label{subsec: OMFs}

We now introduce the theory of (definite) orthogonal modular forms. This is a special case of the wider theory of algebraic modular forms introduced by Gross in \cite{gross1999algebraic} and discussed in detail for our particular case in \cite{greenbergvoight}. Let $(V,Q)$ be a positive definite quadratic space over $\mathbb{Q}$ and $\Lambda\subset V$ be a lattice. Also let $W$ be a finite-dimensional representation of $O(V)$. The corresponding space of orthogonal modular forms of weight $W$ and level $O(\widehat{\Lambda})$ is then defined to be:

\[\mathcal{M}(O(V),O(\widehat{\Lambda}),W) = \left\{\varphi : O(\widehat{V}) \to W \,\Bigg|\, \begin{array}{c}\varphi(\gamma gk) = \gamma \varphi(g) \text{ for all }\\ (\gamma, g, k) \in O(V)\times O(\widehat{V})\times O(\widehat{\Lambda})\end{array}\right\}.\] These spaces are finite dimensional, since any such function is completely determined by its values on $O(V)\backslash O(\widehat{V})/O(\widehat{\Lambda}) \leftrightarrow \text{Cl}(\Lambda)$.

For each $\hat{g}\in O(\widehat{V})$ there exists a Hecke operator $T_{\hat{g}}$ acting on $\mathcal{M}(O(V),O(\widehat{\Lambda}),W)$. In direct analogy with classical Hecke operators, one considers the (finite) double coset decomposition: \[O(\widehat{\Lambda})\,\hat{g}\,O(\widehat{\Lambda}) = \coprod_{i=1}^{r(\hat{g})}\hat{g}_i O(\widehat{\Lambda})\] and defines: \[T_{\hat{g}}(\varphi)(g) = \sum_{i=1}^{r(\hat{g})}\varphi(gg_i).\]

Suppose from now on that $n \geq 3$. If we take $\hat{g} = \hat{p}$ for any prime $p\nmid 2\,\mathrm{disc}(\Lambda)$ (where $\hat{p}\in O(\widehat{V})$ is as defined previously) then we have a family of Hecke operators $T_{\hat{p}}$ acting on $\mathcal{M}(O(V),O(\widehat{\Lambda}),W)$. We will often simply write these operators as $T_p$ for reasons that will become clear soon. The algebra generated by these operators is commutative.

Recall that the coset representatives for $T_p$ can be identified with $p$-neighbours of $\Lambda$. Under the identification $O(\widehat{V})/O(\widehat{\Lambda}) \leftrightarrow \text{gen}(\Lambda)$ we then find that this operator simply averages the value of $\phi$ over the $p$-neighbours of the input lattice. In the special case where $W = \mathbb{C}$ is the trivial representation then \[\mathcal{M}(O(V),O(\widehat{\Lambda}),W) \cong \{\varphi:\text{Cl}(\Lambda) \rightarrow\mathbb{C}\} \cong \mathbb{C}^h\] and under the standard basis the operator $T_p$ simply counts the number of $p$-neighbours of $\Lambda_i$ that are isometric to $\Lambda_j$ (where $\text{Cl}(\Lambda) = \{[\Lambda_1],[\Lambda_2],...,[\Lambda_h]\}$). Alternatively, these operators represent the adjacency matrices of the (directed) $p$-neighbour graphs (with vertices for each of the classes $[\Lambda_i]$ and directed edges from $[\Lambda_i]$ to $[\Lambda_j]$ for each $p$-neighbour of $[\Lambda_i]$ that is isometric to $[\Lambda_j]$). 

It is known that the space $\mathcal{M}(O(V),O(\widehat{\Lambda}),W)$ has a basis of simultaneous eigenforms for the Hecke algebra generated by the operators $T_p$ for $p\nmid 2\,\mathrm{disc}(\Lambda)$ prime. Again, a lot of what is described above still works if we include the prime $p=2$ (e.g.\ if $\Lambda$ is even and $v_2(\mathrm{disc}(\Lambda)) \leq 1$).
    
Note that the constant function $\mathbbm{1}$ is an eigenvector for every Hecke operator $T_{\hat{g}}$, having eigenvalue $r(\hat{g})$ (i.e.\ when $\hat{g} = \hat{p}$ for $p\nmid 2\,\mathrm{disc}(\Lambda)$ the eigenvalue is $N(p)$, the total number of $p$-neighbours of $\Lambda$).

As with spaces of classical modular forms, it is possible to decompose into the direct product of an ``Eisenstein subspace" and a ``cuspidal subspace" (even though the locally symmetric space in this case has no cusps!). Without giving the full definition we remark that the Eisenstein subspace can only be non-trivial when the weight $W = \mathbb{C}$ is the trivial representation (see \cite[\S1.2]{rama2020computation}), and in all cases considered in this paper it is simply the one-dimensional space spanned by $\mathbbm{1}$. The cuspidal subspace $\mathcal{S}(O(V), O(\hat{\Lambda}), W)$ is then defined to be the full space $\mathcal{M}(O(V),O(\hat{\Lambda}), W)$ if $W$ is a non-trivial weight and the span of the Hecke eigenforms that are not $\mathbbm{1}$ when $W = \mathbb{C}$ is trivial (alternatively, the orthogonal complement of $\mathbbm{1}$ with respect to a suitable inner product, defined in \cite{hein2016orthogonal}). The Hecke operators are known to respect this decomposition.
    
Later we will make heavy use of theorems of Hein-Tornaria-Voight \cite{heintornariavoight}, proving the existence of Hecke-equivariant isomorphisms between certain spaces of orthogonal modular forms and spaces of classical modular forms. We will introduce these now.

From now on we specialise to the case of $n = 3$ (the case of interest in this paper). In this case it is well known that: \[O(V) \twoheadrightarrow O(V)/\{\pm I\} \cong SO(V) \cong B^{\times}/\mathbb{Q}^{\times}.\] Here $B = \mathrm{Cl}^0(V)$ is the even Clifford algebra of $V$, which in our case is a definite quaternion algebra over $\mathbb{Q}$. Any representation of $B^{\times}/\mathbb{Q}^{\times}$ can be lifted to one for $O(V)$ via the surjection above. 

One such family of representations comes from choosing a splitting: \[B^{\times}/\mathbb{Q}^{\times}\longhookrightarrow \text{PGL}_2(\mathbb{C}).\] For each integer $k\geq 0$ the irreducible representation $W_{2k} = \text{Sym}^{2k}(\mathbb{C})\otimes\text{det}^{-k}$ of $\text{GL}_2(\mathbb{C})$ descends to $\text{PGL}_2(\mathbb{C})$ and so gives rise to an irreducible representation of $O(V)$. This representation can also be constructed by considering the action of $O(V)$ on a suitable space of harmonic polynomials on $V$ with respect to an explicit Laplacian $\Delta_Q$. See Example $3.5.3$ of \cite{hein2016orthogonal} for more details.

Secondly, we can construct certain characters of $O(V)$. Applying the reduced norm map gives \[\text{nrd}:B^{\times}/\mathbb{Q}^{\times} \longrightarrow \mathbb{Q}^{\times}/(\mathbb{Q}^{\times})^2.\] For each $d\in\mathbb{N}$ we get a character: \begin{align*} \phi_d:\mathbb{Q}^{\times}/(\mathbb{Q}^{\times})^2 &\longrightarrow \{\pm 1\}\\ a &\longmapsto \prod_{p\mid d}(-1)^{v_p(a)}. \end{align*} The composite $\theta_d = \phi_d \circ \text{nrd}$ defines spinor characters of $O(V)$ for each choice of $d$.

The following results will come in handy later (with the spaces $M(SO(V),SO(\widehat{\Lambda}),W)$ being defined similarly to above).

\begin{lem}\label{OtoSO}
For each lattice $\Lambda\subset V$ there is a Hecke-equivariant isomorphism: \[\mathcal{M}(O(V), O(\widehat{\Lambda}),W_{2k}\otimes \theta_d) \cong M(SO(V),SO(\widehat{\Lambda}), W_{2k}\otimes \theta_d)\]
\end{lem}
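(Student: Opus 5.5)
The plan is to exploit the fact that in odd dimension $n=3$ the element $-I$ is central in $O(V)$ and splits off as a direct factor: $O(V)=SO(V)\times\{\pm I\}$ globally, and likewise $O(V_p)=SO(V_p)\times\{\pm I\}$ and $O(\Lambda_p)=SO(\Lambda_p)\times\{\pm I\}$ locally, since $-I$ preserves every lattice. Consequently $O(\widehat{V}) = SO(\widehat{V})\times\{\pm 1\}^{\widehat{}}$ and $O(\widehat{\Lambda})=SO(\widehat{\Lambda})\times\prod_p\{\pm I\}$. The map I would use is restriction: send $\varphi\in\mathcal{M}(O(V),O(\widehat{\Lambda}),W_{2k}\otimes\theta_d)$ to $\varphi|_{SO(\widehat{V})}$. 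The transformation law $\varphi(\gamma g\kappa)=\gamma\varphi(g)$ restricts to the one for $(SO(V),SO(\widehat{\Lambda}))$, so the map is well defined.

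For injectivity and surjectivity, I would first check how $-I$ acts on the weight. Since $-I\mapsto 1$ in $O(V)/\{\pm I\}\cong B^\times/\mathbb{Q}^\times$ and both $W_{2k}$ and $\theta_d$ are pulled back through this quotient, $-I$ acts trivially on $W_{2k}\otimes\theta_d$. Every $g\in O(\widehat{V})$ can be written as $g=g_0\kappa$ with $g_0\in SO(\widehat{V})$ and $\kappa\in\prod_p\{\pm I_p\}\subset O(\widehat{\Lambda})$. Hence $\varphi(g)=\varphi(g_0)$, so $\varphi$ is determined by its restriction, which gives injectivity. Conversely, given $\psi$ on $SO(\widehat{V})$, I would define $\varphi(g_0\kappa)=\psi(g_0)$. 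This is well defined because the decomposition is unique. I would then verify the equivariance: for $\gamma=\gamma_0\epsilon$ with $\epsilon\in\{\pm I\}$ central, we get $\varphi(\gamma g_0\kappa)=\varphi(\gamma_0 g_0\cdot\epsilon\kappa)=\psi(\gamma_0g_0)=\gamma_0\psi(g_0)=\gamma\psi(g_0)$, using that $\epsilon$ acts trivially. Right invariance under $O(\widehat{\Lambda})=SO(\widehat{\Lambda})\cdot\prod\{\pm I\}$ follows similarly, using centrality.

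For Hecke equivariance, I would choose $\widehat{p}$ with determinant $1$ at $p$ (the displayed matrix has determinant $p\cdot p^{-1}=1$), so $\widehat{p}\in SO(\widehat{V})$. The main obstacle is to show that the coset decomposition matches. I want $O(\widehat{\Lambda})\widehat{p}\,O(\widehat{\Lambda})=\bigsqcup \widehat{p}_mO(\widehat{\Lambda})$ to correspond to $SO(\widehat{\Lambda})\widehat{p}\,SO(\widehat{\Lambda})=\bigsqcup\widehat{p}_mSO(\widehat{\Lambda})$ with the same representatives $\widehat{p}_m\in SO(\widehat{V})$. Because the $\pm I$ factors are central and lie in both stabilisers, $O(\widehat{\Lambda})\widehat{p}\,O(\widehat{\Lambda})=SO(\widehat{\Lambda})\widehat{p}\,SO(\widehat{\Lambda})\cdot\prod\{\pm I\}$. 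Intersecting a coset $\widehat{p}_mO(\widehat{\Lambda})$ with $SO(\widehat{V})$ gives $\widehat{p}_m SO(\widehat{\Lambda})$, provided the representatives are adjusted by signs to lie in $SO$. The two decompositions are therefore in bijection, which is also consistent with both counting the $p$-neighbours. Then $T_p\varphi(g_0)=\sum_m\varphi(g_0\widehat{p}_m)=\sum_m\psi(g_0\widehat{p}_m)=T_p\psi(g_0)$, so restriction commutes with $T_p$. This establishes the claimed isomorphism.
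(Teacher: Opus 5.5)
Your proposal is correct and takes the same route as the paper: restrict functions from $O(\widehat{V})$ to $SO(\widehat{V})$, and use that $-I$ acts trivially on $W_{2k}\otimes\theta_d$, so every form on $SO(\widehat{V})$ extends. You also make explicit two things the paper leaves implicit: the splitting $O(\widehat{V})=SO(\widehat{V})\times\prod_p\{\pm I_p\}$ behind both injectivity and surjectivity, and the matching of coset representatives for $T_p$ that gives Hecke-equivariance.
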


\begin{proof}
The restriction map from $O(\widehat{V})$ to $SO(\widehat{V})$ induces a Hecke-equivariant injective homomorphism \[\mathcal{M}(O(V),O(\widehat{\Lambda}),W) \longrightarrow M(SO(V),SO(\widehat{\Lambda}),W)\] for any representation $W$ of $O(V)$. The image consists of functions \[\varphi\in M(SO(V),SO(\widehat{\Lambda}),W)\] such that $\varphi(g) = \varphi((-I)\cdot g) = (-I)\cdot \varphi(g)$ for every $g\in SO(\widehat{V})$. When $-I$ acts trivially on $W$ this condition is automatically satisfied and so we get a Hecke-equivariant isomorphism. All of the representations constructed above have this property, establishing the result. 
\end{proof}

Before moving on to the main result of Hein-Tornaria-Voight we need to recall one more definition. Let $\mathcal{O}$ be an order in a quaternion algebra $B$ over $\mathbb{Q}$ and $p \mid \mathrm{discrd}(\mathcal{O})$ be prime. Then the Eichler invariant $\varepsilon_p(\mathcal{O})$ takes values in $\{-1,0,+1\}$ and is defined by (where $\text{rad}(\mathcal{O}_p)$ is the Jacobson radical of the local order $\mathcal{O}_p = \mathcal{O}\otimes_{\mathbb{Z}}\mathbb{Z}_p$):
$$\varepsilon_p(\mathcal{O}) = \begin{cases}
 +1, \quad &\text{if }\mathcal{O}_p/\text{rad}(\mathcal{O}_p) \cong \mathbb{F}_p \times \mathbb{F}_p \text{ \quad ($\mathcal{O}$ is \textit{residually split} at } p),\\
 0, \quad &\text{if }\mathcal{O}_p/\text{rad}(\mathcal{O}_p) \cong \mathbb{F}_p \quad (\text{$\mathcal{O}$ is \textit{residually ramified} at $p$),}\\
 -1, \quad &\text{if }\mathcal{O}_p/\text{rad}(\mathcal{O}_p) \cong \mathbb{F}_{p^2} \quad (\text{ $\mathcal{O}$ is \textit{residually inert} at }p).
\end{cases}$$
 The orders featuring in the main results of \cite{heintornariavoight} are residually unramified everywhere, i.e.\ having local Eichler invariant $\varepsilon_p(\mathcal{O}) \in \{\pm 1\}$ for all primes $p \mid \mathrm{discrd}(\mathcal{O})$. In particular, Eichler orders are examples of such orders, since if $\mathrm{discrd}(\mathcal{O}) = DM$ with $D = \mathrm{disc}(B)$ and $M$ coprime to $D$ then it can be shown that \[\varepsilon_p(\mathcal{O}) = \begin{cases}
    -1, &\text{if } p \mid D, \\
     1, &\text{if } p \mid M
\end{cases}.\] A complete classification of local orders at odd primes via their Eichler invariants is possible, and is given in Section $24.6$ of \cite{voight2021quaternion}. The analogous classification for $p=2$ is given in \cite{lemurell}.

The following main result is then a consequence of the above lemma, along with Theorem 4.10, Theorem 4.12 and Corollary 6.12 in \cite{heintornariavoight}.

\begin{thm}\label{thm: 6.12 in HTV}
Suppose that $\Lambda\subset V$ is an even lattice in a positive definite ternary quadratic space $(V,Q)$ over $\mathbb{Q}$. Suppose that the associated quaternion algebra $B = \textup{Cl}^0(V)$ has discriminant $D$ and that the order $\mathcal{O} = \textup{Cl}^0(\Lambda)$ is residually unramified everywhere of discriminant $N = \textup{discrd}(\mathcal{O}) = \frac{1}{2}\textup{disc}(\Lambda)$. Let $N^+$ and $N^-$ be the products of the prime powers $p^e||N$ that satisfy $\varepsilon_p(\mathcal{O}) = 1$ and $\varepsilon_p(\mathcal{O}) = -1$ respectively. 

Then for each $k \geq 0$ and each square-free $d\mid N$ we have a Hecke-equivariant isomorphism (for all Hecke operators $T_p$ with $p \nmid N$):
$$\mathcal{S}(O(V), O(\widehat{\Lambda}), W_{2k}\otimes \theta_d) \cong 
\bigoplus_{M|N^-} S_{2k+2}^{M\textup{-new}}(\Gamma_0(MN^{+}), \varepsilon_D^{(MN^{+})}\varepsilon_d^{(MN^{+})})$$
where the sum runs over $M|N^-$ with $v_p(M) \equiv v_p(N) \bmod 2$ for all primes $p \,|\, N^-$.
\end{thm}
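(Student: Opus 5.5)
The plan is to assemble the statement from three ingredients. The first is Lemma~\ref{OtoSO}, which moves from $O$ to $SO$. The second is the identification $SO(V)\cong B^\times/\mathbb{Q}^\times$, which turns spaces of $SO$-modular forms into spaces of quaternionic modular forms on $\widehat{\mathcal{O}}^\times$. The third is the cited results of Hein--Tornaria--Voight (Theorems 4.10 and 4.12 and Corollary 6.12 of \cite{heintornariavoight}), which relate quaternionic forms for a residually unramified order to classical newforms.

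\textbf{Step 1: pass from $O$ to $SO$.} By Lemma~\ref{OtoSO}, since $-I$ acts trivially on $W_{2k}\otimes\theta_d$, the space $\mathcal{S}(O(V),O(\widehat{\Lambda}),W_{2k}\otimes\theta_d)$ is Hecke-equivariantly isomorphic to the corresponding cuspidal space for $SO$. We must check that the cuspidal parts correspond. When the weight is nontrivial both spaces are everything. In the trivial-weight case the constant function $\mathbbm{1}$ restricts to $\mathbbm{1}$, so the complements match.

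\textbf{Step 2: translate $SO$-forms into quaternionic forms.} By functoriality of the even Clifford construction (Section~\ref{Ternary Lattices and Quaternion orders}), $SO(V_p)\cong B_p^\times/\mathbb{Q}_p^\times$. Under this identification $SO(\Lambda_p)$ corresponds to the image of the normaliser-type group of $\mathcal{O}_p$; \cite{heintornariavoight} record this as $SO(\Lambda_p)\leftrightarrow N_{B_p^\times}(\mathcal{O}_p)/\mathbb{Q}_p^\times$. The spinor character $\theta_d$ is the pullback of $\phi_d\circ\mathrm{nrd}$. Hence the $SO$-space becomes the space of quaternionic forms of weight $W_{2k}$ and level $N_{\widehat{B}^\times}(\widehat{\mathcal{O}})$, twisted by $\phi_d\circ\mathrm{nrd}$. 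Equivalently, it is the subspace of $M(\widehat{\mathcal{O}}^\times,W_{2k})$ on which the Atkin--Lehner-type normaliser elements at each $p\mid N$ act by the signs prescribed by $d$.

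The Hecke operators $T_p$ for $p\nmid N$ match. On the lattice side they are $p$-neighbours, given by the double coset of $\widehat{p}$. On the order side they are the double coset of an element of reduced norm $p$. For $p=2$ this uses the remark that the neighbour theory still works when $\Lambda$ is even and $v_2(\mathrm{disc})\le 1$.

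\textbf{Step 3: apply the Jacquet--Langlands-type result.} For $\mathcal{O}$ residually unramified of discriminant $N=N^+N^-$, Theorems 4.10 and 4.12 of \cite{heintornariavoight} decompose $M(\widehat{\mathcal{O}}^\times,W_{2k})$ Hecke-equivariantly. At primes with $\varepsilon_p=+1$ the local order is Eichler, giving level $p^e$ in $N^+$. At primes with $\varepsilon_p=-1$ one obtains contributions from newforms of levels $p^{m}$ with $m\equiv e \pmod 2$; this is the source of the parity condition on $M\mid N^-$. Corollary 6.12 then identifies the eigenspaces of the normaliser elements with Atkin--Lehner eigenspaces. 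The sign vector $\varepsilon_D$ comes from the ramified primes of $B$, and $\varepsilon_d$ comes from the twist by $\theta_d$. Removing the Eisenstein part on both sides yields the stated isomorphism.

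I expect the main obstacle to be Step 2. Specifically, one must verify that the spinor twist by $\theta_d$ on the orthogonal side corresponds exactly to the sign condition $\varepsilon_D^{(MN^+)}\varepsilon_d^{(MN^+)}$. This requires tracking how $-I$, the normaliser quotient $N(\mathcal{O}_p)/\mathbb{Q}_p^\times\mathcal{O}_p^\times$, and the reduced-norm valuation interact at each $p\mid N$. I would handle this by comparing directly with the statement of Corollary 6.12 of \cite{heintornariavoight}, which is phrased in this orthogonal language. The proof then reduces to citing it after Lemma~\ref{OtoSO}.
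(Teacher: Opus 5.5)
Your route is the paper's route. The paper gives no argument beyond saying the result follows from Lemma~\ref{OtoSO} together with Theorems 4.10, 4.12 and Corollary 6.12 of \cite{heintornariavoight}, and your Steps 1--3 unpack exactly that citation. However, the point you single out as the main obstacle fails at primes $p \mid d$ with $v_p(N)$ even. That point is the claim that the twist by $\theta_d$ accounts for the factor $\varepsilon_d^{(MN^+)}$ at \emph{every} prime $p\mid d$. More careful citation cannot fix this, because within the paper's definitions of $\theta_d$ and $\varepsilon_d$ the statement itself is not correct there.

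Let $\hat\theta_p$ be the character of $O(\widehat{V})$ given by the parity of $v_p$ of the spinor norm (equivalently of $\mathrm{nrd}$) of the $p$-component. It restricts to $\theta_p$ on $O(V)$.

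\emph{The twist is invisible at even-exponent primes.} If $\hat\theta_p$ is trivial on $O(\Lambda_p)$, then for $p\nmid d$ the map $\varphi\mapsto\hat\theta_p\varphi$ is an isomorphism from weight $W\otimes\theta_d$ to weight $W\otimes\theta_{dp}$. It commutes with every $T_q$, $q\nmid N$, since the Hecke representatives have $p$-component in $O(\Lambda_p)$. This triviality holds whenever $v_p(N)$ is even, using $SO(\Lambda_p)\cong N_{B_p^\times}(\mathcal{O}_p)/\mathbb{Q}_p^\times$:
\begin{itemize}
\item an Eichler order of level $p^e$ has normaliser generated modulo $\mathbb{Q}_p^\times\mathcal{O}_p^\times$ by an element of reduced norm $p^e$;
\item a residually inert order of even level $p^{2m}$ (which forces $B_p$ split) lies in a unique maximal order, so its normaliser lies in $\mathbb{Q}_p^\times\mathrm{GL}_2(\mathbb{Z}_p)$.
\end{itemize}

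\emph{The right-hand side does change.} Adjoining $p$ to $d$ flips the required sign at $p$, and this changes the classical module:
\begin{itemize}
\item For $p\mid N^+$, the eigensystem of a newform of level prime to $p$ occurs with multiplicity $e/2+1$ in the $W_{p^e}=+1$ part but $e/2$ in the $W_{p^e}=-1$ part.
\item For $p\mid N^-$, the summands with $v_p(M)>0$ consist of eigensystems new at $p$, whose sign at $p$ is determined, so the two choices are disjoint.
\item For $k=0$, $\hat\theta_p$ itself lies in the left-hand side, which the paper takes to be all of $\mathcal{M}$ for the nontrivial weight $\theta_p$. It has $T_q$-eigenvalue $q+1$, which no cusp form has.
\end{itemize}
Concretely, for Genus $4$A at weight $W_8$, the statement would make the level-$2$ weight-$10$ newform (which has $w_2=-1$) appear twice for $d=1$ but once for $d=p$. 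Yet the two orthogonal spaces are Hecke-isomorphic.

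So your Steps 2--3, and the paper's citation, are sound exactly when every prime dividing $d$ has odd exponent in $N$, for instance whenever $N$ is squarefree. At even-exponent primes the twist by $\theta_p$ is invisible and the sign component at $p$ should not be flipped. This affects the cases $d\in\{p,2p\}$ of Genera $4$A and $5$A.
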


Note that $D = \mathrm{disc}(B)$ is known to be square-free and to divide $N^-$ and so it makes sense to construct the sign vector $\varepsilon_D^{(N)}$.

The aim of this paper is to generalise this result to the cases of $\Lambda$ of half-discriminant $p^2$ with $p$ prime and $4p^2$ with $p$ an odd prime. We will consider all genera of such discriminants, even and odd (which will prove to be a substantial task!). Such lattices cannot be handled using the above results because when the underlying orders exist (i.e.\ $\Lambda$ is even) they are residually ramified at $p$ or $2$. Our initial strategy for proving Hecke-equivariant isomorphisms between the spaces $\mathcal{S}(O(V),O(\widehat{\Lambda}),W_{2k}\otimes\theta_d)$ and spaces of classical modular forms will be to shift focus to various sublattices and superlattices $\Pi\subset V$ satisfying $O(\widehat{\Lambda}) = O(\widehat{\Pi})$. On the level of Hecke modules, doing this makes no change.

\begin{lem}\label{samespace}
Suppose $\Lambda,\Pi\subset V$ are lattices such that $O(\widehat{\Lambda}) = O(\widehat{\Pi})$. Then for each representation $W$ of $O(V)$ there is an equality of Hecke modules: \[\mathcal{M}(O(V),O(\widehat{\Lambda}),W) = \mathcal{M}(O(V),O(\widehat{\Pi}),W).\]
\end{lem}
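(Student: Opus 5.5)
The plan is to observe that both sides are defined purely in terms of the triple $(O(V), K, W)$ with $K$ an open compact subgroup of $O(\widehat{V})$. The lattice enters only through $K = O(\widehat{\Lambda})$ and through the Hecke operators, so once $K$ agrees for $\Lambda$ and $\Pi$ the spaces should coincide as sets. The remaining work is to check that the Hecke actions also coincide.

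First, the underlying spaces. By definition, $\mathcal{M}(O(V),O(\widehat{\Lambda}),W)$ is the set of functions $\varphi: O(\widehat{V})\to W$ satisfying $\varphi(\gamma g k) = \gamma\varphi(g)$ for $\gamma\in O(V)$ and $k\in O(\widehat{\Lambda})$. The ambient group $O(\widehat{V})$ does not depend on the lattice: the restricted product condition is ``$g_p\in O(\Lambda_p)$ for almost all $p$''. Any two lattices in $V$ agree at almost all primes, so $O(\Lambda_p)=O(\Pi_p)$ for almost all $p$, and in any case this is forced by the hypothesis. Since $O(\widehat{\Lambda}) = O(\widehat{\Pi})$, the defining invariance conditions are literally identical, and the two spaces are equal as subspaces of the space of all functions $O(\widehat{V})\to W$.

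Second, the Hecke operators. For $\hat g\in O(\widehat{V})$, the operator $T_{\hat g}$ is defined from the double coset $K\hat g K$ and its decomposition $\coprod_i \hat g_i K$, which again depend only on $K$. So $T_{\hat g}$ acts identically on both sides. The one point to check is that the operators named $T_p$ are the same. For $p\nmid 2\,\mathrm{disc}(\Lambda)\,\mathrm{disc}(\Pi)$ we have $\Lambda_p$ and $\Pi_p$ both unimodular. Equality of the local factors, $O(\Lambda_p)=O(\Pi_p)$, then lets us choose $\hat p$ with respect to a hyperbolic splitting of $V_p$ adapted to one of them. Since $T_{\hat p}$ depends only on the double coset $K\hat pK$, and any two admissible choices of $\hat p$ are conjugate by an element of $O(\Lambda_p)=O(\Pi_p)$ (both arise from a hyperbolic basis of a unimodular lattice), the operator is well defined and common to both. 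Therefore the Hecke modules are equal, and the Eisenstein/cuspidal decompositions agree as well, since they are defined Hecke-theoretically.

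The only mild obstacle is this bookkeeping about the normalisation of $\hat p$ and the set of primes at which $T_p$ is defined. If $\mathrm{disc}(\Lambda)$ and $\mathrm{disc}(\Pi)$ have different prime supports, we restrict to the common good primes, or simply take all $T_{\hat g}$ for $\hat g\in O(\widehat{V})$, which is the natural generality of the statement.
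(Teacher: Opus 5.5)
Your proposal is correct and takes the same approach as the paper. Equal level subgroups $O(\widehat{\Lambda})=O(\widehat{\Pi})$ give literally identical defining conditions, and each $T_{\hat{g}}$ depends only on the double coset $O(\widehat{\Lambda})\,\hat{g}\,O(\widehat{\Lambda})=O(\widehat{\Pi})\,\hat{g}\,O(\widehat{\Pi})$. Your extra check that the choice of $\hat{p}$ is well defined up to conjugation by $O(\Lambda_p)=O(\Pi_p)$ is a harmless refinement that the paper leaves implicit.
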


\begin{proof}
Since $O(\widehat{\Lambda}) = O(\widehat{\Pi})$ is follows that $\mathcal{M}(O(V),O(\widehat{\Lambda}),W) = \mathcal{M}(O(V),O(\widehat{\Pi}),W)$ as spaces. We also have that: \[O(\widehat{\Lambda})\,g\, O(\widehat{\Lambda}) = O(\widehat{\Pi})\,g\, O(\widehat{\Pi})\] for any $g\in O(\widehat{V})$. This implies that we get the same Hecke representatives and hence the same Hecke operators on both sides.
\end{proof}

We will often find ourselves in a position where the related lattice $\Pi$ can be viewed as a lattice of smaller discriminant upon scaling of the underlying quadratic form. Once again, this provides an equivalent Hecke module.

\begin{lem}\label{scaling}
Let $\Lambda\subset V$ be a lattice in a quadratic space $(V,Q)$ over $\mathbb{Q}$. The Hecke module $\mathcal{M}(O(V),O(\widehat{\Lambda}),W)$ is invariant under non-zero scaling of the underlying quadratic map $Q$.
\end{lem}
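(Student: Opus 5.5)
The plan is to show that scaling changes none of the data that define the Hecke module: the ambient space, the groups, the weight and the double cosets. Fix $\lambda\in\mathbb{Q}^{\times}$ and write $Q^{(\lambda)} = \lambda Q$, with bilinear form $T^{(\lambda)} = \lambda T$. The underlying vector space $V$ and the lattice $\Lambda$ are unchanged; only the form changes.

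The first step is to compare orthogonal groups. For $\phi\in\text{GL}(V)$ we have $Q^{(\lambda)}(\phi v) = Q^{(\lambda)}(v)$ for all $v$ if and only if $Q(\phi v)=Q(v)$ for all $v$, because $\lambda\neq 0$. Hence $O(V,Q) = O(V,Q^{(\lambda)})$ as subgroups of $\text{GL}(V)$. The same argument works after tensoring with $\mathbb{Q}_p$, giving $O(V_p,Q)=O(V_p,Q^{(\lambda)})$ for every prime $p$.

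The lattice stabilisers $O(\Lambda_p)=\{\phi\in O(V_p)\,|\,\phi(\Lambda_p)=\Lambda_p\}$ depend only on the group and on $\Lambda_p$, so they also agree. Consequently the restricted products $O(\widehat{V})$ and the open compact subgroups $O(\widehat{\Lambda})=\prod_p O(\Lambda_p)$ coincide for the two forms.

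The second step concerns the weight. A representation $W$ of $O(V)$ is simply a representation of this common group, so the defining condition $\varphi(\gamma g k)=\gamma\varphi(g)$ is literally the same for both forms. The two spaces of orthogonal modular forms are therefore equal as sets. It is worth remarking that the particular weights $W_{2k}\otimes\theta_d$ are also preserved. The even Clifford algebra of $(V,\lambda Q)$ is isomorphic to that of $(V,Q)$, via $e_ie_j\mapsto\lambda e_ie_j$, and this identification is compatible with $B^{\times}/\mathbb{Q}^{\times}\cong SO(V)$. Since $\text{nrd}$ is only rescaled by $\lambda^2$, a square, the spinor characters are unchanged.

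The third step is to check the Hecke operators. Each operator $T_{\hat g}$ is defined purely from the double coset $O(\widehat{\Lambda})\,\hat g\,O(\widehat{\Lambda})$ inside $O(\widehat{V})$, and these are identical for the two forms. This is the same argument as in Lemma \ref{samespace}.

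The only point needing care is the labelling of the operators $T_p$, since they were defined through $\hat p$, which uses a hyperbolic splitting $V_p=\mathbb{H}\perp\mathbb{H}^{\perp}$. I would handle this as follows. A hyperbolic plane for $Q$ with basis $e,f$ becomes one for $\lambda Q$ with basis $e,\lambda^{-1}f$. The matrix $\operatorname{diag}(p,p^{-1},1,\dots)$ in this new basis is the same linear map, so $\hat p$ is unchanged. Alternatively, for $p\nmid 2\lambda\,\mathrm{disc}(\Lambda)$ one can note that $p$-neighbours are defined by an index condition, which is independent of the form, together with integrality, which is preserved up to the fixed scaling. This is the only step I expect to require any verification, and the verification is routine.
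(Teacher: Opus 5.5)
Your proposal is correct and follows essentially the same route as the paper: scaling by $\lambda\neq 0$ leaves $O(V)$, every $O(V_p)$, and hence $O(\widehat{V})$ and $O(\widehat{\Lambda})$ unchanged, so the argument of Lemma \ref{samespace} gives equality of the spaces and of all the Hecke operators. Your additional checks go beyond the paper's proof and are correct: the weights $W_{2k}\otimes\theta_d$ are unaffected (via the isomorphism of even Clifford algebras), and so are the elements $\hat{p}$. These checks make explicit a point the paper leaves implicit when it later applies this lemma before invoking Hein--Tornaria--Voight.
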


\begin{proof}
This is also clear. For any $\alpha\in\mathbb{Q}^{\times}$ the scaled quadratic space $(V,Q^{(\alpha)})$ has the same orthogonal group as $(V,Q)$. The same is true for $(\widehat{V},Q^{(\alpha)})$ and $(\widehat{V},Q)$. Consequently, the subgroup $O(\widehat{\Lambda})$ is invariant under scaling of the underlying quadratic map, which by Lemma \ref{samespace} implies an equality of spaces of orthogonal modular forms and Hecke operators.
\end{proof}

\section{Local ternary lattices sharing the same orthogonal group}\label{Orthog}

As previously mentioned, we will be interested in constructing sublattices and superlattices of certain lattices $\Lambda$ that share the same adelic orthogonal group. With this in mind, it will be handy to have explicit results concerning local lattices that have equal orthogonal groups. We give the relevant results of this flavour now, in anticipation of later results.

\begin{prop}\label{p-mod}
Let $p$ be an odd prime. Suppose that $(V_p,Q)$ is a ternary quadratic space over $\mathbb{Q}_p$ containing a $p$-adic lattice $\Lambda_p\subset V_p$ with Gram matrix $\langle a,p,pb\rangle$ for some $a,b\in\{1,r\}$. Then the $p$-adic sublattice $\Pi_p\subset V_p$ of index $p$ with Gram matrix $\langle p^2a,p,pb\rangle$ satisfies $O(\Lambda_p) = O(\Pi_p)$.
\end{prop}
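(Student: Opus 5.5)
The plan is to show that each of $\Lambda_p$ and $\Pi_p$ can be recovered from the other by an operation that commutes with every isometry of $V_p$. Both inclusions $O(\Lambda_p)\subseteq O(\Pi_p)$ and $O(\Pi_p)\subseteq O(\Lambda_p)$ then follow at once. The natural operation is ``take the dual lattice with respect to $T$, then multiply by $p$''. For any $\phi\in O(V_p)$ and any lattice $L\subset V_p$ we have $\phi(L^{\ast}) = \phi(L)^{\ast}$, because $T(\phi v,\phi w) = T(v,w)$. Since $\phi$ is $\mathbb{Q}_p$-linear we also have $\phi(pL) = p\,\phi(L)$. Hence if $\phi(L) = L$ then $\phi(pL^{\ast}) = pL^{\ast}$.

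First I fix the basis. Let $e_1,e_2,e_3$ be a basis of $\Lambda_p$ realising the Gram matrix $\langle a,p,pb\rangle$. The index-$p$ sublattice in the statement is then $\Pi_p = \mathbb{Z}_p(pe_1)\oplus\mathbb{Z}_p e_2\oplus\mathbb{Z}_p e_3$, whose Gram matrix in this basis is $\langle p^2a,p,pb\rangle$. I am assuming this is the sublattice intended. If $\Pi_p$ is only given up to its Gram matrix, one first checks that it is this canonical sublattice. I will take this as the meaning of the statement.

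Next I compute the duals. Because $a,b$ are $p$-adic units and the Gram matrix is diagonal,
\[\Lambda_p^{\ast} = \mathbb{Z}_p e_1 \oplus \mathbb{Z}_p\tfrac{e_2}{p}\oplus\mathbb{Z}_p\tfrac{e_3}{p},\]
and therefore $p\Lambda_p^{\ast} = \mathbb{Z}_p(pe_1)\oplus\mathbb{Z}_pe_2\oplus\mathbb{Z}_pe_3 = \Pi_p$. Similarly, for $\Pi_p$ with basis $pe_1,e_2,e_3$ we get
\[\Pi_p^{\ast} = \mathbb{Z}_p\tfrac{pe_1}{p^2}\oplus\mathbb{Z}_p\tfrac{e_2}{p}\oplus\mathbb{Z}_p\tfrac{e_3}{p} = p^{-1}\Lambda_p,\]
so that $p\Pi_p^{\ast} = \Lambda_p$.

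Combining these two identities with the observation in the first paragraph gives the result. Any $\phi\in O(\Lambda_p)$ preserves $p\Lambda_p^{\ast} = \Pi_p$, and any $\phi\in O(\Pi_p)$ preserves $p\Pi_p^{\ast} = \Lambda_p$. Hence $O(\Lambda_p) = O(\Pi_p)$.

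There is no serious obstacle. The only point needing care is the identification of $\Pi_p$ with the canonical sublattice $pe_1\mathbb{Z}_p + e_2\mathbb{Z}_p + e_3\mathbb{Z}_p$. This matters because $\Lambda_p$ may have other index-$p$ sublattices whose Gram matrices look similar. The duality identities use only that $a,b$ are units and that $p$ is odd, which the hypotheses guarantee.
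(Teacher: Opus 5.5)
Your proof is correct, and it takes a different and more elementary route than the paper. The paper first cites Allcock's theorem: since $\Lambda_p$ is elementary, $O(\Lambda_p)$ is maximal among stabilisers of $p$-adic lattices in $O(V_p)$. This reduces the statement to the single inclusion $O(\Lambda_p)\subseteq O(\Pi_p)$. The paper gets that inclusion by listing the $p^2+p+1$ index-$p$ sublattices of $\Lambda_p$ and noting that $\Pi_p$ is the only one with norm ideal $p\mathbb{Z}_p$.

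You replace both the maximality theorem and the enumeration by the identities $\Pi_p=p\Lambda_p^{\ast}$ and $\Lambda_p=p\Pi_p^{\ast}$. In fact one of them suffices, since $L^{\ast\ast}=L$ gives $O(L)=O(L^{\ast})=O(cL)$ for any lattice $L$ and any $c\in\mathbb{Q}_p^{\times}$. Your argument is self-contained, shows that $\Pi_p$ is canonically attached to $\Lambda_p$, and works verbatim for any prime and any units $a,b$.

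You flagged the uniqueness of $\Pi_p$ but left it unchecked; this is exactly what the paper's enumeration supplies. It is also quick in your language:
\begin{itemize}
\item every index-$p$ sublattice contains $p\Lambda_p$;
\item the reduction of $T$ on $\Lambda_p/p\Lambda_p$ is $a\,x_1y_1$;
\item the only plane on which this vanishes is $\{x_1=0\}$.
\end{itemize}
Hence $p\Lambda_p^{\ast}$ is the unique index-$p$ sublattice with scale contained in $p\mathbb{Z}_p$.

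What the paper's template (maximality plus uniqueness in a list of sublattices) buys is uniformity. It is reused for the $2$-adic propositions, where $\Pi_2$ is not a rescaled dual of $\Lambda_2$ and your duality argument does not directly apply.
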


\begin{proof}
Note that since $\Lambda_p$ is an elementary lattice (i.e.\ $\Lambda_p^*/\Lambda_p$ is an elementary $p$-group), Theorem $1.1(1)$ of Allcock \cite{allcock2025} tells us that $O(\Lambda_p)$ is maximal in $O(V_p)$ among the orthogonal groups of all $p$-adic lattices. Hence, if we can show that $O(\Lambda_p)\subseteq O(\Pi_p)$ then we will be done.

By standard arguments, there are exactly $p^2+p+1$ sublattices of $\Lambda_p$ of index $p$ up to change of basis. If $\Lambda_p$ has basis $\{v_1,v_2,v_3\}$ with respect to the above Gram matrix then these sublattices have bases \[\{pv_1,v_2,v_3\} \qquad \{v_1+\alpha v_2, pv_2, v_3\} \qquad \{v_1+\alpha v_3, v_2 + \beta v_3, pv_3\},\] with $\alpha,\beta\in \{0,1,...,p-1\}$, and corresponding Gram matrices: \[\begin{pmatrix}
p^2a & 0 & 0\\ 0 & p & 0\\ 0 & 0 & pb
\end{pmatrix}\qquad \begin{pmatrix}
a+p\alpha^2 & p^2\alpha & 0\\ p^2\alpha & p^3 & 0\\ 0 & 0 & pb
\end{pmatrix} \qquad \begin{pmatrix}
a+pb\alpha^2 & pb\alpha\beta & p^2b
\alpha\\ pb\alpha\beta & p(1+b\beta^2) & p^2b\beta\\ p^2b\alpha & p^2b\beta & p^3b
\end{pmatrix}.\] Note that the first Gram matrix corresponds to the lattice $\Pi_p$, and is visibly the only one having norm ideal $p\mathbb{Z}_p$. Now if $\gamma\in O(\Lambda_p)$ then $\gamma\Lambda_p = \Lambda_p$ and $\gamma\Pi_p$ is a sublattice of $\Lambda_p$ of index $p$. However, isometries preserve norm ideals and so we must have that $\gamma\Pi_p = \Pi_p$, i.e.\ $\gamma\in O(\Pi_p)$. This gives $O(\Lambda_p)\subseteq O(\Pi_p)$, as required.
\end{proof}

\begin{prop}\label{2-unimod}
Let $a,b,c\in\{\pm 1, \pm 5\}$ and let $\Delta = ab+ac+bc$. Suppose that $(V_2,Q)$ is a ternary quadratic space over $\mathbb{Q}_2$ containing a $2$-adic lattice $\Lambda_2\subset V_2$ with Gram matrix $\langle a,b,c\rangle$. Then the $2$-adic sublattice $\Pi_2\subset V_2$ of index $2$ with Gram matrix: \[\begin{pmatrix}
a+c & c & 2c\\ c & b+c & 2c\\ 2c & 2c & 4c    
\end{pmatrix}\sim\begin{cases}\left\langle\begin{pmatrix}
2 & 1\\ 1 & 2
\end{pmatrix}, 12abc\right\rangle & \text{ if } \Delta \equiv 3 \bmod 8\\ \left\langle\begin{pmatrix}
0 & 1\\ 1 & 0
\end{pmatrix},-4abc\right\rangle & \text{ if } \Delta \equiv 7 \bmod 8\end{cases}\] satisfies $O(\Lambda_2) = O(\Pi_2)$.
\end{prop}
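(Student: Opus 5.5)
The plan follows the template of Proposition \ref{p-mod}: a maximality statement for $O(\Lambda_2)$ combined with a characterisation of $\Pi_2$ among the index-$2$ sublattices of $\Lambda_2$ by an isometry invariant.

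First, the claimed Gram matrices. With $\Lambda_2 = \mathbb{Z}_2v_1\oplus\mathbb{Z}_2v_2\oplus\mathbb{Z}_2v_3$ of Gram matrix $\langle a,b,c\rangle$, the displayed matrix is the Gram matrix of $\{v_1+v_3, v_2+v_3, 2v_3\}$. This is exactly the even sublattice
\[\Lambda_2^{\mathrm{ev}}=\{x_1v_1+x_2v_2+x_3v_3 : x_1+x_2+x_3\equiv 0 \bmod 2\},\]
since $Q$ takes values $\tfrac12(ax_1^2+bx_2^2+cx_3^2)$ in the paper's normalisation, up to the convention $T(v,v)=2Q(v)$. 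Its determinant is $4abc$ times a unit check, giving $4abc$ up to squares. The upper-left $2\times 2$ block has determinant $(a+c)(b+c)-c^2=\Delta$, which is odd, and its diagonal entries $a+c$ and $b+c$ are even. So it splits off as an even unimodular binary block, which is the anisotropic plane $\begin{pmatrix}2&1\\1&2\end{pmatrix}$ when $-\Delta\equiv 5 \bmod 8$ and the hyperbolic plane when $-\Delta\equiv 1\bmod 8$; this gives the case split $\Delta\equiv 3,7 \bmod 8$. The orthogonal complement is rank one with value $4abc\cdot\Delta^{-1}$ up to squares. Its class is $4abc\cdot(-3)\sim 12abc$ or $-4abc$ respectively, matching the statement modulo unit squares, using $\Delta^{-1}\equiv\Delta$ modulo squares. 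This part is routine.

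Second, the inclusion $O(\Lambda_2)\subseteq O(\Pi_2)$. The lattice $\Pi_2$ is intrinsically defined as $\{x\in\Lambda_2 : Q(x)\in \mathbb{Z}_2\}$ in the appropriate normalisation, i.e.\ the set of vectors of even norm $T(x,x)\in 2\mathbb{Z}_2$. Indeed $ax_1^2+bx_2^2+cx_3^2\equiv x_1+x_2+x_3 \bmod 2$ because $a,b,c$ are odd. Any $\gamma\in O(\Lambda_2)$ preserves $\Lambda_2$ and norms, hence preserves $\Pi_2$. Equivalently, $\Pi_2$ is the unique index-$2$ sublattice with norm ideal $2\mathbb{Z}_2$, exactly as in the proof of Proposition \ref{p-mod}.

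Third, the reverse inclusion, which I expect to be the main obstacle. For $p=2$ one would like to cite Allcock \cite{allcock2025} for maximality of $O(\Lambda_2)$, since $\Lambda_2$ is unimodular and hence elementary. I would first check whether Theorem $1.1$ of \cite{allcock2025} covers $p=2$ and unimodular odd lattices. If it does, the proof ends here. If not, I would argue directly that $\Lambda_2$ is recovered from $\Pi_2$ intrinsically: I expect $\Lambda_2$ to be the unique lattice $L$ with $\Pi_2\subset L\subset\Pi_2^*$, $[L:\Pi_2]=2$ and $L$ integral, i.e.\ unimodular. Here $\Pi_2^*/\Pi_2\cong\mathbb{Z}/4$, coming from the $4$-modular rank-one component, which has a unique subgroup of order $2$. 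Hence there is a unique intermediate lattice of index $2$ over $\Pi_2$, and it must be $\Lambda_2$. Since $\gamma\in O(\Pi_2)$ preserves $\Pi_2^*$ and hence this unique intermediate lattice, we get $O(\Pi_2)\subseteq O(\Lambda_2)$. This direct argument seems cleanest. The key check is that the discriminant group is cyclic of order $4$, which follows from the decomposition computed in the first step.
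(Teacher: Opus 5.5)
Your proposal is correct. Its first two steps are the paper's argument in slightly different form. The paper lists the seven index-$2$ sublattices and notes that $\Pi_2$ is the only one with norm ideal $\mathbb{Z}_2$ (with respect to $Q$; your ``$2\mathbb{Z}_2$'' is the same statement for $T(x,x)$). That is exactly your intrinsic description of $\Pi_2$ as the even sublattice of $\Lambda_2$. The paper then splits off the binary block by an explicit change of basis and quotes the classification of even unimodular binary $\mathbb{Z}_2$-lattices, as you do. There is one slip: for $\Delta\equiv 3 \bmod 8$ you have $\Delta^{-1}\sim 3$, so the rank-one part is $4abc\cdot 3=12abc$, not $4abc\cdot(-3)$. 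Your final answer is still right, and that unit is forced by the determinant anyway.

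The genuine difference is in the reverse inclusion. The paper takes your first option. Theorem~1.1(2) of \cite{allcock2025} gives maximality of $O(\Lambda_2)$ among orthogonal groups of $2$-adic lattices in $V_2$, so $O(\Lambda_2)\subseteq O(\Pi_2)$ already forces equality. Your fallback is a different, fully self-contained argument. Since $\Lambda_2$ is integral and contains $\Pi_2$, it lies in $\Pi_2^*$. The splitting $\Pi_2\sim\langle U,4u\rangle$ with $U$ unimodular gives $\Pi_2^*/\Pi_2\cong\mathbb{Z}/4\mathbb{Z}$, generated by $\tfrac12(v_1+v_2+v_3)$. Hence $\Lambda_2=\Pi_2+\mathbb{Z}_2(v_1+v_2+v_3)$ is the unique index-$2$ overlattice of $\Pi_2$ inside $\Pi_2^*$, and so it is stabilised by $O(\Pi_2)$.

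Your route buys independence from Allcock's recent maximality theorem, and it recovers each lattice intrinsically from the other. The same device (integral overlattices lie in the dual, and a cyclic discriminant group has a unique subgroup of each order) would also replace the brute-force superlattice enumeration for $\Pi_2,\Omega_2$ in Proposition~\ref{2-mod}(ii), whose discriminant groups are cyclic of order $8$. It would not help where the discriminant group is non-cyclic. The paper's route buys brevity and uniformity with Propositions~\ref{p-mod} and~\ref{2-mod}(i), which rest on the same citation.
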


\begin{proof}
Theorem $1.1(2)$ of Allcock \cite{allcock2025} shows that $O(\Lambda_2)$ is maximal in $O(V_2)$ among the orthogonal groups of all $2$-adic lattices. Hence it again suffices to show that $O(\Lambda_2) \subseteq O(\Pi_2)$.

Fix a basis $\{v_1,v_2,v_3\}$ for $\Lambda_2$ realising the Gram matrix above. The $2^2+2+1 = 7$ sublattices of $\Lambda_2$ of index $2$ then have bases: \[\{2v_1,v_2,v_3\} \qquad \{v_1+\alpha v_2,2v_2,v_3\}\qquad \{v_1+\alpha v_3, v_2 + \beta v_3, 2v_3\}\] with $\alpha,\beta\in\{0,1\}$ and corresponding Gram matrices:

\[\begin{pmatrix}
4a & 0 & 0\\ 0 & b & 0\\ 0 & 0 & c
\end{pmatrix}\qquad \begin{pmatrix}
a+b\alpha^2 & 2b\alpha & 0\\ 2b\alpha & 4b & 0\\ 0 & 0 & c
\end{pmatrix} \qquad \begin{pmatrix}
a+c\alpha^2 & c\alpha\beta & 2c
\alpha\\ c\alpha\beta & b+c\beta^2 & 2c\beta\\ 2c\alpha & 2c\beta & 4c
\end{pmatrix}.\]

Note that $\Pi_2$ is the sublattice with Gram matrix corresponding to the choice $\alpha=\beta = 1$. It is also the only sublattice having norm ideal $\mathbb{Z}_2$ (the rest give norm ideal $\frac{1}{2}\mathbb{Z}_2$). A similar argument to before shows that $O(\Lambda_2)\subseteq O(\Pi_2)$, as required.

It remains to show the equivalence of Gram matrices.  First we notice that: \[\begin{pmatrix}1 & 0 & 0\\ 0 & 1 & 0\\ -\frac{2bc}{\Delta} & -\frac{2ac}{\Delta} & 1\end{pmatrix} \begin{pmatrix}
a+c & c & 2c
\\ c & b+c & 2c\\ 2c & 2c & 4c
\end{pmatrix} \begin{pmatrix}1 & 0 & -\frac{2bc}{\Delta}\\ 0 & 1 & -\frac{2ac}{\Delta}\\ 0 & 0 & 1\end{pmatrix} = \begin{pmatrix}a+c & c & 0\\ c & b+c & 0\\ 0 & 0 & \frac{4abc}{\Delta}\end{pmatrix}\] (noting that $\Delta\in\mathbb{Z}_2^{\times}$ since $a,b,c\in\{1,3,5,7\}$). The rest then follows by realising that the top left $2\times 2$ block is unimodular of type II with determinant $\Delta$, so must be equivalent to $\begin{pmatrix}0 & 1\\ 1 & 0\end{pmatrix}$ when $\Delta \equiv 7 \bmod 8$ or $\begin{pmatrix}2 & 1\\ 1 & 2\end{pmatrix}$ when $\Delta \equiv 3 \bmod 8$ (Theorem $8.9$ of \cite{gerstein2008basic}).
\end{proof}

\begin{prop}\label{2-mod} Let $a,b,c\in\{ \pm 1, \pm 5\}$ and $(V_2,Q)$ be a ternary quadratic space over $\mathbb{Q}_2$.
\begin{enumerate}[label = (\roman*)]
\item{Suppose that $(V_2,Q)$ contains a $2$-adic lattice $\Lambda_2\subset V_2$ with Gram matrix $\left\langle\begin{pmatrix}2 & 1 \\ 1 & 2\end{pmatrix},-2a\right\rangle$. Then the $2$-adic sublattice $\Pi_2\subset V_2$ of index $2$ with Gram matrix $\left\langle\begin{pmatrix}2 & 1 \\ 1 & 2\end{pmatrix},-8a\right\rangle$ satisfies $O(\Lambda_2) = O(\Pi_2)$.}
\item{Suppose that $(V_2,Q)$ contains a $2$-adic lattice $\Lambda_2\subset V_2$ with Gram matrix $\langle a,b,2c\rangle$. Then the $2$-adic sublattices $\Pi_2, \Omega_2\subset V_2$ of index $2$ with Gram matrices: 

\begin{align*}&\langle a,b+2c,8(c+2b)\rangle &&\langle a+2c,b,8(c+2a)\rangle &&\text{ if } a+b\equiv 0 \bmod 4\\ &\langle a,b,8c\rangle &&\langle a+2c,b+2c+4,8(c+2a+2b)\rangle &&\text{ if } a+b \equiv 2 \bmod 4\end{align*} satisfy $O(\Pi_2) = O(\Lambda_2) = O(\Omega_2)$.

Further, the $2$-adic sublattice $\Sigma_2\subset V_2$ with Gram matrix \[\begin{pmatrix}a+b & 2b & 0\\ 2b & 4b & 0\\ 0 & 0 & 2c\end{pmatrix} \sim \begin{cases}\langle-2c,-2c,-10c\rangle & \text{ if } a+b \equiv 0 \bmod 4 \text{ and } ab \equiv 3 \bmod 8\\ \langle2,-2,2c\rangle & \text{ if } a+b \equiv 0 \bmod 4 \text{ and } ab \equiv 7 \bmod 8\\ \langle 2a,2a,2c\rangle & \text{ if } a+b \equiv 2 \bmod 4 \text{ and } ab \equiv 1 \bmod 8\\ \langle -10a,-2a,2c\rangle & \text{ if } a+b \equiv 2 \bmod 4 \text{ and } ab \equiv 5 \bmod 8 \end{cases}\] satisfies $O(\Lambda_2) \subseteq O(\Sigma_2)$, with equality when any of the following conditions are satisfied: \begin{itemize}
\item{$a+b\equiv 0 \bmod 4$ and $ab\equiv 7 \bmod 8$,}
\item{$a+b\equiv 2 \bmod 4$, $ab\equiv 1 \bmod 8$ and $a\not\equiv c \bmod 4$,}
\item{$a+b\equiv 2 \bmod 4$, $ab\equiv 5 \bmod 8$ and $a\equiv c \bmod 4$.}
\end{itemize}}
\end{enumerate}
\end{prop}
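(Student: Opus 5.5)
The plan follows the template of Propositions \ref{p-mod} and \ref{2-unimod}. In each case the lattice $\Lambda_2$ is elementary: in (i) its discriminant group is $\mathbb{Z}/2$ (from the $-2a$ summand), and in (ii) it is $\mathbb{Z}/2$ (from the $2c$ summand). So Theorem $1.1(2)$ of Allcock \cite{allcock2025} should again show that $O(\Lambda_2)$ is maximal among orthogonal groups of $2$-adic lattices in $V_2$. Wherever equality is claimed, it therefore suffices to prove $O(\Lambda_2)\subseteq O(\Pi_2)$ (respectively $O(\Omega_2)$, $O(\Sigma_2)$).

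The containment comes from listing the seven index-$2$ sublattices of $\Lambda_2$. With a basis $\{v_1,v_2,v_3\}$ realising the Gram matrix, they have bases $\{2v_1,v_2,v_3\}$, $\{v_1+\alpha v_2,2v_2,v_3\}$ and $\{v_1+\alpha v_3,v_2+\beta v_3,2v_3\}$. Any $\gamma\in O(\Lambda_2)$ permutes these seven sublattices. It also preserves every isometry invariant of a sublattice (norm ideal, scale, Jordan decomposition, $2$-adic symbol), and it preserves the index-$4$ intersections between pairs of sublattices. It is therefore enough to show that the target sublattice is singled out by such invariants.

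\emph{Case (i).} Every sublattice of the form $\{v_1+\alpha v_3, v_2+\beta v_3, 2v_3\}$ contains $2v_3$. The sublattice $\Pi_2=\{v_1,v_2,2v_3\}$ should be the unique one whose Jordan decomposition has $2$-modular part of type II and norm ideal $2\mathbb{Z}_2$. The others, for example $\{2v_1,v_2,v_3\}$, have a different Jordan structure, such as a unimodular odd component. I will check this by computing the seven Gram matrices, using the relations $2v_1\cdot v_1=4$ and so on.

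\emph{Case (ii).} The seven Gram matrices split according to $a+b\bmod 4$ and $ab\bmod 8$. I will diagonalise each one, in the same way as the explicit change of basis in Proposition \ref{2-unimod}, to obtain the stated forms. The lattices of the form $\langle \ast,\ast,8\ast\rangle$ have Jordan type (rank-$2$ unimodular)$\perp$($8$-modular). I expect exactly two of them, $\Pi_2$ and $\Omega_2$, to have this type. The lattices $\langle v_1,v_2,2v_3\rangle$ with $\alpha=\beta=1$ etc. should give the remaining types, and $\Sigma_2$ is the one with $2$-modular Jordan form of rank $3$.

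In the cases where $\Pi_2$ and $\Omega_2$ share the same invariants, $\gamma$ could a priori swap them. To rule this out I will either compare their $2$-adic symbols (oddities, or the sign after sign walking) to show they are non-isometric, or distinguish them using invariants of the pair ($\Lambda_2$, sublattice), such as the isometry class of $\Lambda_2\cap\Pi_2^{\ast}$ or of $\Pi_2^\ast\cap\Lambda_2/\Pi_2$. Since both index-$2$ sublattices arise from the vectors $v_1,v_2$ symmetrically when $a+b\equiv 0\bmod 4$, the swap is the real worry. In that case I will instead show that $O(\Lambda_2)\subseteq O(\Pi_2)\cap O(\Omega_2)$ via the distinct norm ideals of the quotient forms on $\Lambda_2/\Pi_2$ versus $\Lambda_2/\Omega_2$.

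\emph{The sublattice $\Sigma_2$.} The uniqueness argument gives $O(\Lambda_2)\subseteq O(\Sigma_2)$ unconditionally. Here Allcock's maximality cannot simply be quoted, because $\Sigma_2$ is $2$-modular. A $2$-modular lattice scaled by $\tfrac12$ is unimodular, so the question reduces to whether $\Lambda_2$ is the unique lattice of its type whose index-$2$ sublattice is $\Sigma_2$. In other words, it is about whether $O(\Sigma_2)$ permutes the index-$2$ superlattices $L\supset\Sigma_2$ inside $\tfrac12\Sigma_2$ that are isometric to $\Lambda_2$. I will enumerate the superlattices of $\tfrac12\Sigma_2$ that are integral with norm ideal $\tfrac12\mathbb{Z}_2$. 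Under the three listed congruence conditions, I expect $\Lambda_2$ to be the only one with the correct symbol, which gives equality. I expect this step to be the main obstacle: the bookkeeping of $2$-adic symbols under oddity fusion and sign walking is delicate, and the conditions on $a\bmod 4$ versus $c\bmod 4$ must emerge from exactly this comparison.
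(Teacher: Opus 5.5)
\textbf{Gap: the maximality claim in part (ii).} Your reduction for part (ii) rests on the claim that $O(\Lambda_2)$ is maximal for $\Lambda_2=\langle a,b,2c\rangle$, and this is false in general. Being elementary with $\Lambda_2^{*}/\Lambda_2\cong\mathbb{Z}/2$ does not give maximality here. The paper invokes Allcock's Theorem~1.1(2) only for the unimodular $\langle a,b,c\rangle$ and for the even lattice in (i), never in (ii).

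The statement you are proving already rules your claim out. If $O(\Lambda_2)$ were maximal, the unconditional inclusion $O(\Lambda_2)\subseteq O(\Sigma_2)$ would force equality in every case, yet equality is asserted only under congruence conditions. For $a=b=c$ those conditions fail. The paper's Genus $3$ discussion treats exactly this case: the local lattice is $\langle u,u,2u\rangle$ and $\Sigma_2\sim\langle 2u,2u,2u\rangle$ for a unit $u$. The class numbers there are $2$ and $1$, so the inclusion is genuinely strict. Your own paragraph on $\Sigma_2$, where you correctly see that equality needs extra work, also contradicts the maximality you assumed at the start. As written, your argument for $\Pi_2$ and $\Omega_2$ therefore proves only $O(\Lambda_2)\subseteq O(\Pi_2)\cap O(\Omega_2)$, and the reverse inclusions are missing.

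\textbf{How to close it.} The missing step is what the paper does: prove $O(\Pi_2)\subseteq O(\Lambda_2)$ directly. Any isometry of $\Pi_2$ permutes the seven index-$2$ superlattices of $\Pi_2$ and preserves integrality, and $\Lambda_2$ is the only integral one among them. To see this, write $\Pi_2=U\perp\mathbb{Z}_2u_3$ with $U$ unimodular of rank $2$ and $u_3$ spanning the $8$-modular constituent. An integral overlattice $\Pi_2+\mathbb{Z}_2\tfrac{w}{2}$ requires $w\in\Pi_2\cap 2\Pi_2^{*}=2U\perp\mathbb{Z}_2u_3$, so $w\equiv u_3 \bmod 2\Pi_2$. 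The same argument applies to $\Omega_2$.

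\textbf{Smaller points.}
\begin{itemize}
\item Your treatment of $\Sigma_2$ is the paper's argument: it is the unique even index-$2$ sublattice, and then $\Lambda_2$ is unique up to isometry among the seven superlattices of $\Sigma_2$ under the stated conditions.
\item The swap worry in (ii) disappears. $\Pi_2$ and $\Omega_2$ are non-isometric, since each is unique up to isometry among the seven index-$2$ sublattices, so the auxiliary quotient-form invariant is unnecessary.
\item In (i) your predicted distinguishing invariant is wrong. Every index-$2$ sublattice of the even lattice $\Lambda_2$ is even, and $\Pi_2$ has no $2$-modular constituent. The three lattices $\{v_1+\alpha v_3,v_2+\beta v_3,2v_3\}$ with $(\alpha,\beta)\neq(0,0)$ share $\Pi_2$'s Jordan ranks, scales and norm ideal; each is $\langle\begin{pmatrix}0&1\\1&0\end{pmatrix},24a\rangle$. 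What isolates $\Pi_2$ is that its unimodular plane is the anisotropic one. This is a sign in the $2$-adic symbol that cannot walk to the $8$-modular constituent, and your planned Gram matrix computation would reveal it.
\end{itemize}
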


\begin{proof}
\begin{enumerate}[label = (\roman*)]
\item{Theorem $1.1(2)$ of Allcock \cite{allcock2025} shows that $O(\Lambda_2)$ is maximal in $O(V_2)$ among the orthogonal groups of all $2$-adic lattices. Hence it again suffices to show that $O(\Lambda_2) \subseteq O(\Pi_2)$.

Fix a basis $\{v_1,v_2,v_3\}$ for $\Lambda_2$ realising the Gram matrix above. The sublattices of $\Lambda_2$ of index $2$ then have bases: \[\{2v_1,v_2,v_3\}\qquad \{v_1+\alpha v_2,2v_2,v_3\}\qquad\{v_1+\alpha v_3, v_2 + \beta v_3, 2v_3\}\] with $\alpha,\beta\in\{0,1\}$ and corresponding Gram matrices:

\[\begin{pmatrix}
8 & 2 & 0\\ 2 & 2 & 0\\ 0 & 0 & -2a
\end{pmatrix}\qquad \begin{pmatrix} 2(1+\alpha+\alpha^2) & 2(1+2\alpha) & 0\\ 2(1+2\alpha) & 8 & 0\\ 0 & 0 & -2a
\end{pmatrix} \qquad \begin{pmatrix}
2(1-a\alpha^2) & 1-2a\alpha\beta & -4a\alpha\\ 1-2a\alpha\beta & 2(1-a\beta^2) & -4a\beta\\ -4a\alpha & -4a\beta & -8a
\end{pmatrix}.\] A simple calculation shows that the first lattice and the two lattices in the second family are all isometric (each Gram matrix is equivalent to $\langle 6,2,-2a\rangle$ over $\mathbb{Z}_2$). Similarly, the three lattices in the third family corresponding to $(\alpha,\beta) \neq (0,0)$ are all isometric (each Gram matrix is equivalent to $\left\langle\begin{pmatrix}0 & 1\\ 1 & 0\end{pmatrix},24a\right\rangle$ over $\mathbb{Z}_2$). 

The lattice corresponding to $(\alpha,\beta) = (0,0)$ has Gram matrix $\left\langle\begin{pmatrix}2 & 1\\ 1 & 2\end{pmatrix},-8a\right\rangle$, which is not isometric to any of the other index $2$ lattices, showing that $O(\Lambda_2) \subseteq O(\Pi_2)$, as required.}
\item{Fix a basis $\{v_1,v_2,v_3\}$ for $\Lambda_2$ realising the Gram matrix above. Then the sublattices of $\Lambda_2$ of index $2$ have bases: \[\{2v_1, v_2, v_3\}\qquad \{v_1+\alpha v_2,2v_2,v_3\}\qquad \{v_1+\alpha v_3, v_2 + \beta v_3, 2v_3\}\] with $\alpha,\beta\in\{0,1\}$ and corresponding Gram matrices: \[\begin{pmatrix}
4a & 0 & 0\\ 0 & b & 0\\ 0 & 0 & 2c
\end{pmatrix}\qquad \begin{pmatrix} a+b\alpha^2 & 2b\alpha & 0\\ 2b\alpha & 4b & 0\\ 0 & 0 & 2c
\end{pmatrix} \qquad \begin{pmatrix}
a+2c\alpha^2 & 2c\alpha\beta & 4c\alpha\\ 2c\alpha\beta & b+2c\beta^2 & 4c\beta\\ 4c\alpha & 4c\beta & 8c
\end{pmatrix}.\]

The first lattice and the second with $\alpha = 0$ give isometric lattices (either $a,b \equiv \pm 1 \bmod 8$ or $a,b\equiv \pm 5 \bmod 8$, in which case the lattices have equal Conway-Sloane $2$-adic symbol, otherwise they are related by two applications of the standard Sign Walking rule, see \cite{conway2013sphere} or \cite{allcock2020conway}). These lattices visibly have one-dimensional unimodular, $2$-modular and $4$-modular components.

The lattice $\Sigma_2$ comes from the second family with $\alpha = 1$. This lattice is even and all of the other lattices are visibly odd. It follows that $O(\Lambda_2) \subseteq O(\Sigma_2)$. To show that this lattice has Gram matrix equivalent to the diagonal ones given in the proposition we argue as follows:

\vspace{0.1in}
\noindent\textbf{Case 1:} If $a+b\equiv 2 \bmod 4$ then: \begin{align*}\begin{pmatrix}a+b & 2b & 0\\ 2b & 4b & 0\\ 0 & 0 & 2c\end{pmatrix} = 2\begin{pmatrix}\frac{a+b}{2} & b & 0\\ b & 2b & 0\\ 0 & 0 & c\end{pmatrix} &\sim 2\begin{pmatrix}\frac{a+b}{2} & 0 & 0\\ 0 & 2b-\frac{2b^2}{a+b} & 0\\ 0 & 0 & c\end{pmatrix}\\ &\sim \langle a+b,\,\, \frac{4ab}{a+b},\,\, 2c\rangle\\ &\sim\langle a+b,\,\, ab(a+b),\,\, 2c\rangle\end{align*} (since $\frac{2}{a+b}\in\mathbb{Z}_2^{\times}$). If $ab \equiv 1 \bmod 8$ then $b \equiv a \bmod 8$, giving $\langle 2a, 2a, 2c\rangle$. If $ab \equiv 5 \bmod 8$ then $b \equiv 5a \bmod 8$ and so this lattice is $\langle 6a, 30a, 2c\rangle \sim \langle -10a, -2a, 2c\rangle$.

\vspace{0.1in}
\noindent\textbf{Case 2:} If $a+b\equiv 0 \bmod 4$ then the $2\times2$ submatrix: \[\begin{pmatrix}a+b & 2b\\ 2b & 4b\end{pmatrix}\] is even unimodular of discriminant $[2ab]$. It follows that: \[\begin{pmatrix}a+b & 2b & 0\\ 2b & 4b & 0\\ 0 & 0 & 2c\end{pmatrix}\sim \begin{cases}\begin{pmatrix}0 & 2 & 0\\ 2 & 0 & 0\\ 0 & 0 & 2c\end{pmatrix} & \text{ if } ab \equiv 7 \bmod 8\\ \begin{pmatrix}4 & 2 & 0\\ 2 & 4 & 0\\ 0 & 0 & 2c\end{pmatrix} & \text{ if } ab \equiv 3 \bmod 8\end{cases}.\] It is well known that these lattices are isometric to $\langle -2c,-2c,-10c\rangle$ and $\langle 2,-2,2c\rangle$ respectively (since their Conway-Sloane 2-adic symbols satisfy $2^{+2}_{\text{II}}2^\varepsilon_c\sim 2^{(\varepsilon_c)3}$ and $2^{-2}_{\text{II}}2^{\varepsilon_c}_c \sim 2^{(-\varepsilon_c)3}_c$, by Oddity Fusion).

We now show that $O(\Sigma_2)\subseteq O(\Lambda_2)$, under the special conditions given in the proposition. Consider the seven superlattices of $\Sigma_2$. These have bases: \begin{align*}&\left\{\frac{v_1+v_2}{2},2v_2,v_3\right\}\qquad \left\{v_1+v_2,v_2-\frac{\alpha (v_1+v_2)}{2},v_3\right\} \\ &\left\{v_1+v_2,2v_2,\frac{v_3}{2}-\frac{\alpha(v_1+v_2)}{2}-\beta v_2\right\}\end{align*} with $\alpha,\beta\in\{0,1\}$ and Gram matrices: \begin{align*}&\begin{pmatrix}
\frac{a+b}{4} & b & 0\\ b & 4b & 0\\ 0 & 0 & 2c
\end{pmatrix}\qquad \begin{pmatrix} a+b & -\frac{a\alpha}{2}+b\left(1-\frac{\alpha}{2}\right) & 0\\ -\frac{a\alpha}{2}+b\left(1-\frac{\alpha}{2}\right) & \frac{a\alpha^2}{4}+b\left(1-\frac{\alpha}{2}\right)^2 & 0\\ 0 & 0 & 2c
\end{pmatrix} \\ &\begin{pmatrix}
a+b & 2b & -\frac{a\alpha}{2}-b\left(\frac{\alpha}{2}+\beta\right)\\ 2b & 4b & -b(\alpha+2\beta)\\ -\frac{a\alpha}{2}-b\left(\frac{\alpha}{2}+\beta\right) & -b(\alpha+2\beta) & \frac{c}{2}+\frac{a\alpha^2}{4}+b\left(\frac{\alpha}{2}+\beta\right)^2
\end{pmatrix}.\end{align*} A very lengthy brute force calculation shows that when we have one of the following conditions: \begin{itemize}
\item{$a+b\equiv 0 \bmod 4$ and $ab\equiv 7 \bmod 8$,}
\item{$a+b\equiv 2 \bmod 4$, $ab\equiv 1 \bmod 8$ and $a\not\equiv c \bmod 4$,}
\item{$a+b\equiv 2 \bmod 4$, $ab\equiv 5 \bmod 8$ and $a\equiv c \bmod 4$}
\end{itemize} the lattice $\Lambda_2$ is unique up to isometry in this list (and in all other cases there are repetitions). This proves that $O(\Sigma_2)\subseteq O(\Lambda_2)$ in these cases, so that we have equality.

We now consider the four lattices from the third family. After another tedious calculation we find that the Gram matrices are equivalent to the following over $\mathbb{Z}_2$: \begin{align*}&\langle a,b,8c\rangle\\ &\langle a,b+2c,8(c+2b)\rangle\\ &\langle b,a+2c,8(c+2a)\rangle\\ &\langle a+2c,b+2c+4,8(c+2a+2b)\rangle\end{align*} and one sees that all have a two-dimensional unimodular component and a one-dimensional $8$-modular component (hence cannot be isometric to any of the previously considered lattices). 

When $a+b \equiv 0  \bmod 4$, we see that lattices one and four in this list are isometric (using the Oddity Fusion rule, see \cite{conway2013sphere} or \cite{allcock2020conway}), but that lattices two and three are unique up to isometry in the list. When $a+b \equiv 2 \bmod 4$ one instead finds that the lattices two and three are isometric but that lattices one and four are unique up to isometry in the list. In either case, these are the two lattices $\Pi_2,\Omega_2$ given in the statement of the proposition, and the uniqueness of Gram matrix up to isometry once again proves that $O(\Lambda_2)\subseteq O(\Pi_2)$ and $O(\Lambda_2) \subseteq O(\Omega_2)$.

In order to prove the reverse inclusion, consider the generic lattice in this family, i.e.\ the lattice with basis $\{v_1+\alpha v_3, v_2+\beta v_3, 2v_3\}$. There are seven superlattices of index $2$, having bases: \begin{align*}&\left\{\frac{v_1+\alpha v_3}{2},v_2+\beta v_3, 2v_3\right\} \qquad \left\{v_1 + \alpha v_3, \frac{v_2+\beta v_3}{2}-\frac{\gamma (v_1+\alpha v_3)}{2},2v_3\right\}\\ &\left\{v_1+\alpha v_3, v_2+\beta v_3, v_3 - \frac{\gamma(v_1+\alpha v_3)}{2}-\frac{\delta(v_2+\beta v_3)}{2}\right\}\end{align*} with $\gamma,\delta\in\{0,1\}$ and Gram matrices: \begin{align*}&\begin{pmatrix}
\frac{a+2c\alpha^2}{4} & c\alpha\beta & 2c\alpha\\ c\alpha\beta & b+2c\beta^2 & 4c\beta\\ 2c\alpha & 4c\beta & 8c
\end{pmatrix}\qquad \begin{pmatrix}
a+2c\alpha^2 & -\frac{a\gamma}{2}+c\alpha(\beta-\alpha\gamma) & 4c\\ -\frac{a\gamma}{2}+c\alpha(\beta-\alpha\gamma) & \frac{b}{4}+\frac{a\gamma^2+c(\beta-\alpha\gamma)^2}{2} & 2c(\beta-\alpha\gamma)\\ 4c & 2c(\beta-\alpha\gamma) & 8c
\end{pmatrix}\\ &\begin{pmatrix}
a+2c\alpha^2 & 2c\alpha\beta & -\frac{a\gamma}{2} - c\alpha(\alpha\gamma+\beta\delta-2)\\ 2c\alpha\beta & b+2c\beta^2 & -\frac{b\delta}{2}-c\beta(\alpha\gamma+\beta\delta-2)\\ -\frac{a\gamma}{2} - c\alpha(\alpha\gamma+\beta\delta-2) & -\frac{b\delta}{2}-c\beta(\alpha\gamma+\beta\delta-2) & \frac{a\gamma^2+b\delta^2}{4} + 2c\left(1-\frac{\alpha\gamma+\beta\delta}{2}\right)^2
\end{pmatrix}\end{align*} It is immediate that the Gram matrix corresponding to $(\gamma,\delta) = (0,0)$ is the only one that is integral (this lattice being isometric to the original lattice $\Lambda_2$). It follows that $O(\Pi_2)\subseteq O(\Lambda_2)$ and $O(\Omega_2)\subseteq O(\Lambda_2)$, so that equality follows in all of the relevant cases.}
\end{enumerate}
\end{proof}

\begin{prop}\label{2-misc}
Let $(V_2,Q)$ be a ternary quadratic space over $\mathbb{Q}_2$.

\begin{enumerate}[label = (\roman*)]
\item{Suppose that $(V_2,Q)$ contains a $2$-adic lattice $\Lambda_2\subset V_2$ with Gram matrix $\langle 1,-1,-2\rangle$. Then there exist $2$-adic sublattices $\Pi_2,\Omega_2\subset V_2$ of index $2$ with Gram matrices $\langle 1,-2,-4\rangle$ and $\langle 1,-1,-8\rangle$ and satisfying $O(\Pi_2) = O(\Omega_2)$.}
\item{Suppose that $(V_2,Q)$ contains a $2$-adic lattice $\Lambda_2\subset V_2$ with Gram matrix $\langle 1,1,2\rangle$. Then there exist $2$-adic sublattices $\Pi_2,\Omega_2\subset V_2$ of index $2$ with Gram matrices $\langle 1,2,4\rangle$ and $\langle 1,-5,-40\rangle$ satisfying $O(\Pi_2) = O(\Omega_2)$.}
\end{enumerate}
\end{prop}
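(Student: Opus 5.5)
The plan is to make $\Pi_2$ and $\Omega_2$ explicit inside $\Lambda_2$ and then prove the two inclusions $O(\Pi_2)\subseteq O(\Omega_2)$ and $O(\Omega_2)\subseteq O(\Pi_2)$ separately. Allcock's maximality theorem (Theorem $1.1$ of \cite{allcock2025}) is no longer available, because neither $\Pi_2$ nor $\Omega_2$ is elementary. So I would show that each of the two lattices can be recovered from the other by a canonical construction, meaning one that commutes with every isometry of $V_2$.

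\textbf{Step 1 (explicit sublattices).} Fix a basis $\{v_1,v_2,v_3\}$ of $\Lambda_2$ realising the diagonal Gram matrix. Run through the seven index $2$ sublattices listed in the proofs of Propositions \ref{2-unimod} and \ref{2-mod}. For each one, reduce the Gram matrix over $\mathbb{Z}_2$ to a Conway--Sloane symbol, using Oddity Fusion and Sign Walking where needed. From these I would pick out one sublattice isometric to $\langle 1,-2,-4\rangle$ (resp.\ $\langle 1,2,4\rangle$) and one isometric to $\langle 1,-1,-8\rangle$ (resp.\ $\langle 1,-5,-40\rangle$). For instance, in (i) the basis $\{v_1,v_2,2v_3\}$ already gives $\langle 1,-1,-8\rangle$, so this is $\Omega_2$, and $\Pi_2$ should come from the third family with $(\alpha,\beta)\neq(0,0)$.

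\textbf{Step 2 (recovering $\Lambda_2$).} I would express $\Lambda_2$ canonically in terms of each of $\Pi_2$ and $\Omega_2$. Natural candidates are $\Pi_2+2\Pi_2^{\ast}$, $\Pi_2\cap 2\Pi_2^{\ast}$, the even sublattice, and the scaled dual $2^k\Pi_2^{\ast}$. For example, if $\Pi_2$ has diagonal basis $w_i$ with Gram matrix $\langle1,-2,-4\rangle$, then
\[\Pi_2+2\Pi_2^{\ast}=\mathbb{Z}_2w_1+\mathbb{Z}_2w_2+\mathbb{Z}_2\tfrac{w_3}{2}\sim\langle 1,-2,-1\rangle\sim\langle1,-1,-2\rangle.\]
I expect this to be exactly $\Lambda_2$ as a subset of $V_2$, and this should be checked in the coordinates of Step 1. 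I would then look for the analogous construction for $\Omega_2$, for instance a scaled dual or the sum of $\Omega_2$ with a suitable multiple of $\Omega_2^{\ast}$. These constructions give $O(\Pi_2)\subseteq O(\Lambda_2)$ and $O(\Omega_2)\subseteq O(\Lambda_2)$.

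\textbf{Step 3 (isolating the other lattice).} Given Step 2, it remains to show that $\Omega_2$ is fixed by $O(\Pi_2)$, and that $\Pi_2$ is fixed by $O(\Omega_2)$. Since $O(\Pi_2)$ preserves both $\Lambda_2$ and $\Pi_2$, it permutes the index $2$ sublattices of $\Lambda_2$ while preserving their isometry classes and their position relative to $\Pi_2$. For example, it preserves the index $[\Pi_2:\Pi_2\cap\Omega_2]$ and the isometry class of $\Pi_2\cap\Omega_2$. Step 1's enumeration, refined by these invariants, should then show that $\Omega_2$ is the unique index $2$ sublattice of $\Lambda_2$ with its profile. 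This is the same uniqueness argument used in the proof of Proposition \ref{2-unimod}, and it gives $O(\Pi_2)\subseteq O(\Omega_2)$. The reverse inclusion follows by running the same argument with the roles of $\Pi_2$ and $\Omega_2$ swapped. Part (ii) is handled identically, starting from $\langle1,1,2\rangle$.

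The main obstacle is Step 3, together with the canonical description of $\Lambda_2$ from $\Omega_2$ in Step 2. Several of the index $2$ sublattices of $\Lambda_2$ may be isometric to one another, as already happened in Proposition \ref{2-mod}. In that case isometry class alone cannot single out $\Omega_2$, and I would need a finer invariant relative to $\Pi_2$, such as the isometry class of the intersection or of the sum. If no such invariant separates them, my fallback would be to compute the stabilisers directly: find generators of $O(\Pi_2)$ via reflections in vectors of norms $1,2,4$, and check that each generator maps $\Omega_2$ to itself.
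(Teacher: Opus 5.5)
Your argument is correct, and it takes a different route from the paper's. One guess in Step 1 is wrong, though. The third-family sublattices with $(\alpha,\beta)\neq(0,0)$ all have Jordan type $1^{2}8^{1}$, so $\Pi_2$ is not among them. In both parts $\Pi_2=\mathbb{Z}_2v_1+\mathbb{Z}_2(2v_2)+\mathbb{Z}_2v_3$. $\Omega_2$ is spanned by $v_1,v_2,2v_3$ in (i) and by $v_1+v_3,v_2,2v_3$ in (ii).

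With these choices, $\Lambda_2=\Pi_2+2\Pi_2^{\ast}=\Omega_2+4\Omega_2^{\ast}$ in both parts, which settles Step 2.

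In Step 3, isometry class alone does fail. In both parts $\mathbb{Z}_2(2v_1)+\mathbb{Z}_2v_2+\mathbb{Z}_2v_3\cong\Pi_2$. $\Omega_2$ has a twin spanned by $v_1+v_3,v_2+v_3,2v_3$ in (i), and by $v_1,v_2+v_3,2v_3$ in (ii); in (ii) the swap $v_1\leftrightarrow v_2$ lies in $O(\Lambda_2)$ and exchanges each lattice with its twin. Your intersection invariant does separate them:
\begin{itemize}
\item In (i), $\Pi_2\cap\Omega_2\cong\langle 1,-4,-8\rangle$, while the intersections of $\Pi_2$ with the twin of $\Omega_2$ and of $\Omega_2$ with the twin of $\Pi_2$ are $\langle -1,-4,8\rangle$ and $\langle 4,-1,-8\rangle$.
\item In (ii), $\Pi_2\cap\Omega_2\cong\langle 3,4,24\rangle$, while the corresponding intersections are $\langle 1,4,8\rangle$ and $\langle 4,1,8\rangle$.
\end{itemize}
Each of these is $\langle u\rangle\perp M$ with $\mathfrak{s}(M)\subseteq 4\mathbb{Z}_2$. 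Every odd norm is then $\equiv u \bmod 4$, so this residue is an isometry invariant, and it differs in every comparison. No fallback to stabiliser computations is needed.

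The paper goes down rather than up. It takes $\Sigma_2=\Pi_2\cap\Omega_2$ and proves $O(\Pi_2)=O(\Sigma_2)=O(\Omega_2)$ by checking two things:
\begin{itemize}
\item $\Sigma_2$ is unique up to isometry among the index $2$ sublattices of $\Pi_2$ and of $\Omega_2$;
\item $\Pi_2$ and $\Omega_2$ are unique up to isometry among the index $2$ superlattices of $\Sigma_2$.
\end{itemize}
The remark after the proposition dismisses working through $\Lambda_2$ because $O(\Pi_2)\neq O(\Lambda_2)$. Your argument shows this is not fatal: you only need the one-sided inclusions $O(\Pi_2),O(\Omega_2)\subseteq O(\Lambda_2)$, which duality gives for free, plus one relative invariant.

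What your route buys is economy. You reuse the enumeration of the seven index $2$ sublattices of $\Lambda_2$ already done in Proposition~\ref{2-mod}(ii), plus a few intersection computations, instead of three fresh enumerations.

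What the paper's route buys is uniformity and extra information. It uses only absolute isometry classes, in the same template as the other propositions. It also yields $O(\Sigma_2)=O(\Pi_2)$, which is used in the section on Genera B and E, where the common sublattice with Gram matrix $T$ is asserted to have the same adelic orthogonal group. Your argument gives only $O(\Pi_2)\subseteq O(\Sigma_2)$, since $O(\Pi_2)=O(\Omega_2)$ stabilises the intersection, and not the reverse inclusion.
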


\begin{proof} In each case, fix a basis $\{v_1,v_2,v_3\}$ for $\Lambda_2$ realising the corresponding Gram matrix.
\begin{enumerate}[label = (\roman*)]
\item{We may take $\Pi_2$ to be the lattice with basis $\{v_1,2v_2,v_3\}$ and $\Omega_2$ to be the lattice with basis $\{v_1,v_2,2v_3\}$. Consider the common index $2$ sublattice $\Sigma_2$ with basis $\{v_1,2v_2,2v_3\}$ and Gram matrix $\langle 1,-4,-8\rangle$. A similar calculation to those above shows that $\Sigma_2$ is the unique index sublattice of $\Pi_2$ with Gram matrix isometric to $\langle 1,-4,-8\rangle$, so that $O(\Pi_2)\subseteq O(\Sigma_2)$. Similarly we find that $O(\Omega_2)\subseteq O(\Sigma_2)$.

To show the converse, we compute the superlattices of $\Sigma_2$ of index $2$. There are once again seven of these, having bases \[\left\{\frac{1}{2}v_1,2v_2,2v_3\right\}\qquad \left\{v_1, v_2 - \frac{\alpha}{2}v_1,2v_3\right\}\qquad \left\{v_1, 2v_2, v_3-\frac{\alpha}{2}v_1-\beta v_2\right\}\] with $\alpha,\beta\in\{0,1\}$ and corresponding Gram matrices:\[\begin{pmatrix}
\frac{1}{4} & 0 & 0\\ 0 & -4 & 0\\ 0 & 0 & -8
\end{pmatrix}\qquad \begin{pmatrix} 1 & -\frac{\alpha}{2} & 0\\ -\frac{\alpha}{2} & -1+\frac{\alpha^2}{4} & 0\\ 0 & 0 & -8
\end{pmatrix} \qquad \begin{pmatrix}
1 & 0 & -\frac{\alpha}{2}\\ 0 & -4 & 2\beta\\ -\frac{\alpha}{2} & 2\beta & -2+\frac{\alpha^2}{4}-\beta^2
\end{pmatrix}.\] The lattices corresponding to $\alpha = 0$ and $(\alpha,\beta) = 0$ correspond to $\Omega_2$ and $\Pi_2$ respectively and it can be checked that these Gram matrices are unique up to isometry in this list. This proves that $O(\Sigma_2)\subseteq O(\Pi_2)$ and $O(\Sigma_2)\subseteq O(\Omega_2)$, so that the desired equality now follows.}
\item{We may take $\Pi_2$ to be the lattice with basis $\{v_1,2v_2,v_3\}$ and $\Omega_2$ to be the lattice with basis $\{v_1+v_3,v_2,2v_3\}$ (giving two Gram matrices that are equivalent to $\langle 1,2,4\rangle$ and $\langle 1,-5,-40\rangle$ over $\mathbb{Z}_2$). Consider the common index $2$ sublattice $\Sigma_2$ with basis $\{v_1+v_3,2v_2,2v_3\}$ and Gram matrix \[T = \begin{pmatrix}3 & 0 & 4\\ 0 & 4 & 0\\ 4 & 0 & 8\end{pmatrix}.\] The same argument as (i) then shows that $O(\Pi_2) = O(\Sigma_2) = O(\Omega_2)$ (i.e.\ $\Sigma_2$ is unique up to isometry in the list of index $2$ sublattices of $\Pi_2$ and $\Omega_2$, and $\Pi_2, \Omega_2$ are unique up to isometry in the list of index $2$ superlattices of $\Sigma_2$).}
\end{enumerate}
\end{proof}

\begin{rem}
It is tempting to try and prove the above result by using the fact that $\Pi_2,\Omega_2 \subseteq \Lambda_2$ are index $2$ sublattices. However, the proof of Proposition \ref{2-mod}(ii) shows that $O(\Pi_2) \neq O(\Lambda_2)$ and $O(\Omega_2)\neq O(\Lambda_2)$. We must instead descend to a common index $2$ sublattice $\Sigma_2$ to show equality of orthogonal groups.
\end{rem}

\begin{prop}\label{4-mod}
Let $a\in\{\pm 1,\pm 5\}$. Suppose that $(V_2,Q)$ is a ternary quadratic space over $\mathbb{Q}_2$ containing a $2$-adic lattice $\Lambda_2\subset V_2$ with Gram matrix $\left\langle\begin{pmatrix}
0 & 1\\ 1 & 0
\end{pmatrix},-4a\right\rangle$. Then the $2$-adic sublattice $\Pi_2\subset V_2$ with Gram matrix $\langle 2,-2,-4a\rangle$ satisfies $O(\Lambda_2) = O(\Pi_2)$.
\end{prop}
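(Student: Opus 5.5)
The plan is to prove the two inclusions $O(\Lambda_2)\subseteq O(\Pi_2)$ and $O(\Pi_2)\subseteq O(\Lambda_2)$ separately, using the same ``unique up to isometry in a finite list'' argument as in Propositions \ref{2-mod} and \ref{2-misc}. Allcock's maximality result is not available here, because $\Lambda_2^\ast/\Lambda_2\cong\mathbb{Z}/4\mathbb{Z}$, so $\Lambda_2$ is not elementary. Fix a basis $\{e,f,g\}$ of $\Lambda_2$ with $T(e,f)=1$, $Q(e)=Q(f)=0$, $T(g,g)=-4a$ and $g\perp e,f$. Then $\Pi_2=\mathbb{Z}_2(e+f)+\mathbb{Z}_2(e-f)+\mathbb{Z}_2 g$ has Gram matrix $\langle 2,-2,-4a\rangle$. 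It is the index $2$ sublattice $\{xe+yf+zg : x\equiv y \bmod 2\}$.

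\textbf{Step 1 ($O(\Lambda_2)\subseteq O(\Pi_2)$).} List the seven index $2$ sublattices of $\Lambda_2$, using the bases $\{2e,f,g\}$, $\{e+\alpha f,2f,g\}$ and $\{e+\alpha g,f+\beta g,2g\}$ with $\alpha,\beta\in\{0,1\}$, exactly as in the earlier proofs. Compute their Gram matrices and $2$-adic Jordan decompositions. The point is that $\Pi_2$ has Jordan decomposition with a rank $2$, $2$-modular component of Type I (odd, diagonal $\langle 2,-2\rangle$) together with a $4$-modular line. I expect each of the other six to have either a Type II $2$-modular or unimodular hyperbolic block, for example $\{2e,f,g\}$ gives $2\left\langle\begin{pmatrix}0&1\\1&0\end{pmatrix}\right\rangle\perp\langle -4a\rangle$, or a different Jordan shape, for example a unimodular component when $\beta$ or $\alpha$ allows $e+\alpha g$ and $f+\beta g$ to pair to $1$. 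If so, their Conway--Sloane symbols differ from that of $\Pi_2$ and no sign walking or oddity fusion can relate them, because the type of a Jordan constituent is an invariant. An isometry $\gamma\in O(\Lambda_2)$ permutes these seven sublattices while preserving isometry class, so $\gamma\Pi_2=\Pi_2$.

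\textbf{Step 2 ($O(\Pi_2)\subseteq O(\Lambda_2)$).} List the seven index $2$ superlattices of $\Pi_2$, namely $\Pi_2+\mathbb{Z}_2\tfrac{v}{2}$ for nonzero classes $v\in\Pi_2/2\Pi_2$, and discard the non-integral ones. Adjoining $\tfrac{(e+f)\pm(e-f)}{2}$ returns $\Lambda_2$ (these give the same lattice). Adjoining $\tfrac{g}{2}$ gives the odd lattice $\langle 2,-2,-a\rangle$, which has a unimodular Type I component and so is not isometric to $\Lambda_2$. The remaining classes, such as $\tfrac{e+f}{2}$, $\tfrac{e+f+g}{2}$ and $\tfrac{(e+f)+(e-f)+g}{2}$, should fail integrality or produce odd lattices. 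Where integrality holds, parity and Jordan type separate them from the even lattice $\Lambda_2$, whose symbol is $1^{+2}_{\mathrm{II}}4^{\pm1}$. Hence $\Lambda_2$ is unique up to isometry in this list, and any $\gamma\in O(\Pi_2)$ must fix it.

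The main obstacle is the bookkeeping in the case analysis: making sure that, for every $a\in\{\pm1,\pm5\}$, no candidate accidentally becomes isometric to $\Pi_2$ (respectively $\Lambda_2$) via oddity fusion or sign walking. I would handle this by first comparing coarse invariants (parity, norm ideal, and the ranks and types of the Jordan constituents), which should already separate everything. Only if two candidates agree on all of these would I fall back on comparing full $2$-adic symbols.
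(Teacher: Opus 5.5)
Your proposal is correct and follows the paper's own approach, the two-sided sublattice/superlattice comparison that the paper invokes without giving details. The claims you hedged all hold: the other six index-$2$ sublattices of $\Lambda_2$ are isometric to $2\mathbb{H}\perp\langle -4a\rangle$ (a type II $2$-modular plane) or to $\mathbb{H}\perp\langle -16a\rangle$ (scale $\mathbb{Z}_2$), and the only integral index-$2$ superlattices of $\Pi_2$ come from adjoining half of $2e$, $g$ or $2e+g$, giving $\Lambda_2$ and two odd lattices.
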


\begin{proof}
We omit the details of the proof since this is a very similar computation to the above (compute the sublattices/superlattices and compare isometry classes).
\end{proof}

\section{Genera of half-discriminant $p^2$}\label{2p2}

Let $(V,Q)$ be a positive definite ternary quadratic space over $\mathbb{Q}$, and $\Lambda \subset V$ be an integral lattice of discriminant $2p^2$ with $p$ prime.  We are interested in the spaces $\mathcal{S}(O(V),O(\widehat{\Lambda}),W_{2k}\otimes\theta_d)$ of orthogonal modular forms for various weights and the associated Hecke action coming from the $p$-neighbour operators $T_q$ (for $q\neq 2,p$). We start this task by restricting to $p$ an odd prime and classifying the possible local lattices $\Lambda_q$.

\vspace{0.1in}
\noindent\underline{\textbf{q $\neq$ 2,p}}

\vspace{0.1in}
\noindent Note that $\Lambda_q$ is unimodular if $q\neq 2,p$ and so \[\Lambda_q \sim \langle 1,1,\text{disc}(\Lambda_q)\rangle\sim \langle 1,1,2p^2\rangle\sim\langle 1,1,2\rangle\] 

\noindent\underline{\textbf{q = 2}}

\vspace{0.1in}
\noindent Here $\text{disc}(\Lambda_2) = [2p^2] = [2]\in\mathbb{Z}_2/(\mathbb{Z}_2^{\times})^2$ and a simple calculation gives the following three possibilities (with their Conway-Sloane symbols, Types and Hasse symbols):

\vspace{0.1in}
\begin{center}
\begin{tabular}{c|c|c|c}
$\Lambda_2$ & Symbol & Type & $S(V_2)$\\

\hline

$\langle 1,1,2\rangle$ & $[1^{+2}2^{+1}]_3$ & I & $1$\\

$\langle 1,-1,-2\rangle$ & $[1^{+2}2^{+1}]_7$ & I & $-1$\\

$\left\langle\begin{pmatrix}0 & 1\\ 1 & 0\end{pmatrix},-2\right\rangle$ & $1_{\text{II}}^{+2} 2^{+1}_7$ & II & $-1$
\end{tabular}
\end{center}

\vspace{0.2in}
\noindent\underline{\textbf{q = p}}

\vspace{0.1in}
\noindent Here we have: \[\text{disc}(\Lambda_p) = [2p^2] = \begin{cases} [p^2] & \text{ if } p\equiv 1,7\bmod 8\\ [rp^2] & \text{ if } p\equiv 3,5\bmod 8\end{cases}.\] A similar calculation gives the following possibilities (with $\varepsilon = (-1)^{\frac{p-1}{2}}$):

\vspace{0.1in}
\begin{center}
\begin{tabular}{c|c|c|c}
$\Lambda_p$ & Symbol & $\text{disc}(\Lambda_p)$ & $S(V_p)$\\

\hline
$\langle 1,p,p\rangle$ & $1^{+1}p^{+2}$ & $[p^2]$ & $\varepsilon$\\
$\langle r,p,pr\rangle$ & $1^{-1}p^{-2}$ & $[p^2]$ & $-\varepsilon$\\
$\langle 1,1,p^2\rangle$ & $1^{+2}(p^2)^{+1}$ & $[p^2]$ & $1$\\
$\langle 1,r,p^2r\rangle$ & $1^{-2}(p^2)^{-1}$ & $[p^2]$ & $1$\\
\hline
$\langle r,p,p\rangle$ & $1^{-1}p^{+2}$ & $[rp^2]$ & $\varepsilon$\\
$\langle 1,p,pr\rangle$ & $1^{+1}p^{-2}$ & $[rp^2]$ & $-\varepsilon$\\
$\langle 1,r,p^2\rangle$ & $1^{-2}(p^2)^{+1}$ & $[rp^2]$ & $1$\\
$\langle 1,1,p^2r\rangle$ & $1^{+2}(p^2)^{-1}$ & $[rp^2]$ & $1$
\end{tabular}
\end{center}

\vspace{0.1in}
Now we can classify the genera of such lattices. Since $(V,Q)$ is positive definite we have $S(V_{\infty}) = 1$. We also have $S(V_q) = 1$ for any $q\neq 2,p$, and so each possible genus for $\Lambda$ must give local lattices satisfying $S(V_2)S(V_p) = 1$ in order to satisfy the compatibility relations for Hasse symbols. Hence we must have $S(V_2) = S(V_p)$ and it is now clear that there are five possible genera of interest, with Gram matrices locally at $2$ and $p$ given below (depending on the class $p\bmod 8$):

\vspace{0.2in}
\begin{tabular}{c|c|c|c|c|c}
\multirow{2}{*}{Label} & \multirow{2}{*}{$\Lambda_2$} & \multicolumn{4}{c}{$\Lambda_p$}\\ \cline{3-6} & & $p\equiv 1 \bmod 8$ & $p\equiv 3 \bmod 8$ &  $p\equiv 5 \bmod 8$ & $p\equiv 7 \bmod 8$\\ \hline $1$ & $\left\langle\begin{pmatrix}0 & 1\\ 1 & 0\end{pmatrix},-2\right\rangle$ & \multirow{2}{*}{$\langle r,p,pr\rangle$} & \multirow{2}{*}{$\langle 2,p,p\rangle$} & \multirow{2}{*}{$\langle 1,p,2p\rangle$} & \multirow{2}{*}{$\langle 1,p,p\rangle$}\\ \cline{1-2} $2$ & $\langle 1,-1,-2\rangle$ & & & &\\ \hline $3$ & \multirow{3}{*}{$\langle 1,1,2\rangle$} & $\langle 1,p,p\rangle$ & $\langle 1,p,2p\rangle$ & $\langle 2,p,p\rangle$ & $\langle -1,p,-p\rangle$\\ \cline{3-6} $4$ & & $\langle 1,1,p^2\rangle$ & $\langle 1,2,p^2\rangle$ & $\langle 1,1,2p^2\rangle$ & $\langle 1,-1,-p^2\rangle$\\ \cline{3-6} $5$ & & $\langle 1,r,p^2r\rangle$ & $\langle 1,1,2p^2\rangle$ & $\langle 1,2,p^2\rangle$ & $\langle 1,1,p^2\rangle$
\end{tabular}
\vspace{0.2in}

For a representative lattice $\Lambda\subset V$ coming from each of the genera above, we will either prove or conjecture that there is a Hecke-equivariant isomorphism between $\mathcal{S}(O(V), O(\widehat{\Lambda}),W_{2k}\otimes\theta_d)$ and a space of classical modular forms. 

\subsection{Genus $1$}

This is the only even genus of discriminant $\text{disc}(\Lambda) = 2p^2$. Before proving the relevant transfer result we introduce the linear map $\hat{\psi}_{(p)}\in\text{GL}(\widehat{V})$ with components: \begin{align*}\psi_{(p),q} = \begin{cases}\text{diag}(p,1,1) & \text{ if } q = p\\ \text{id} & \text{ otherwise }\end{cases}\end{align*} where we note that at $p$ this map depends on a choice of basis for $V_p$ (which will usually be a basis for $\Lambda_p$ realising a given Gram matrix).

\begin{thm}\label{genus1} Let $\Lambda$ be a lattice representing Genus $1$. Then for each $k\geq 0$ and $d\in\{1,p\}$ there exists a Hecke-equivariant isomorphism (for the algebra generated by the Hecke operators $T_q$ with $q\neq p$ prime): \[\mathcal{S}(O(V),O(\widehat{\Lambda}),W_{2k}\otimes \theta_d) \cong S_{2k+2}^{\textup{new}}\left(\Gamma_0(p),\varepsilon^{(p)}_p\varepsilon^{(p)}_d\right).\]
\end{thm}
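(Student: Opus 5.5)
The plan is to replace $\Lambda$ by a lattice $\Pi$ of discriminant $2p$ with the same adelic orthogonal group, and then apply Theorem \ref{thm: 6.12 in HTV} to $\Pi$. Locally at $p$, Genus $1$ has $\Lambda_p \sim \langle a,p,pb\rangle$ for suitable $a,b\in\{1,r\}$ (read off from the table according to $p \bmod 8$). We define $\Pi$ to be the lattice with $\Pi_q = \Lambda_q$ for $q\neq p$ and $\Pi_p$ the index $p$ sublattice with Gram matrix $\langle p^2a,p,pb\rangle$ of Proposition \ref{p-mod}. Such a global lattice exists because lattices in $V$ correspond to collections of local lattices agreeing with $\Lambda_q$ almost everywhere. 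By Proposition \ref{p-mod} we have $O(\Pi_p)=O(\Lambda_p)$, hence $O(\widehat{\Pi})=O(\widehat{\Lambda})$.

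Next we rescale. The lattice $\Pi$ has scale $p\mathbb{Z}_p$ at $p$ but its norm at $2$ is unchanged. We therefore scale $Q$ by $1/p$, obtaining a lattice $\Pi'$ with local structure $\langle pa,1,b\rangle$ at $p$, while at $q\neq p$ it is the $1/p$-scaling of a unimodular (resp.\ $1_{\text{II}}^{+2}2^{+1}_7$) lattice, whose isometry class is unchanged up to a harmless unit. In particular $\Pi'$ is even (the $2$-adic part stays type II, since $p$ is a unit at $2$) and has discriminant $2p$ (the determinant $p^5ab$ divided by $p^3$, times the factor from $2$). Lemma \ref{scaling} and Lemma \ref{samespace} then identify $\mathcal{M}(O(V),O(\widehat{\Lambda}),W)$ with the corresponding space for $\Pi'$ in the rescaled space $V'$. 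This identification is compatible with the Hecke operators $T_q$ for $q\neq p$, including $q=2$, since $\Pi'$ is even with $v_2(\mathrm{disc})=1$. The weights $W_{2k}\otimes\theta_d$ are also preserved: scaling does not change $O(V)$, and the spinor character $\theta_p$ only depends on $\mathrm{nrd}$ modulo squares at $p$. This last point needs a brief check, since the Clifford algebra of the scaled space changes, but $SO$ and the spinor norm classes are compatible up to a global square factor.

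Finally we apply Theorem \ref{thm: 6.12 in HTV} to $\Pi'$. Its order $\mathcal{O}'=\mathrm{Cl}^0(\Pi')$ has reduced discriminant $N=p$, which is squarefree, so $\mathcal{O}'$ is a maximal order in the quaternion algebra $B'=\mathrm{Cl}^0(V')$. We need $B'$ to be ramified at $p$, i.e.\ $D=p$. Granted this, $\varepsilon_p(\mathcal{O}')=-1$, giving $N^-=p$ and $N^+=1$. The sum in Theorem \ref{thm: 6.12 in HTV} then reduces to $M=p$, since $v_p(M)\equiv 1 \bmod 2$ is required. This yields $S_{2k+2}^{\text{new}}(\Gamma_0(p),\varepsilon_p^{(p)}\varepsilon_d^{(p)})$ for $d\in\{1,p\}$, as claimed.

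The main obstacle is verifying that $B'$ is definite and ramified exactly at $p$, and that the scaling really preserves the spinor character twists. For the first point I would compute the Hasse invariant of $V'$ at $p$ from $\langle pa,1,b\rangle$ for each residue class of $p \bmod 8$, using the table choices of $a,b$. The local order is maximal at $p$, so $B'$ is either ramified at $p$ or split there; the latter would contradict definiteness combined with the fact that $\mathcal{O}'_2$ is maximal, because the ramification set must have odd size including $\infty$. Hence $D=p$ is forced. Alternatively, this follows from Hilbert reciprocity given ramification at $\infty$ and splitting at $2$.
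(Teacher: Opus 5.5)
Your proposal is correct and is essentially the paper's proof. Like the paper, you pass to the index-$p$ sublattice with local Gram matrix $\langle p^2a,p,pb\rangle$ at $p$ (the paper's $V\cap\hat\psi_{(p)}(\widehat\Lambda)$), use Proposition~\ref{p-mod} with Lemmas~\ref{samespace} and~\ref{scaling}, rescale by $1/p$ to an even lattice of discriminant $2p$ whose Clifford order is maximal in a definite algebra of discriminant $p$, and apply Theorem~\ref{thm: 6.12 in HTV}. Your parity argument for $D=p$ can replace the paper's appeal to the explicit recipe: $B'$ splits at every $q\neq p$ because $\mathrm{discrd}(\mathcal{O}')=p$, and it ramifies at $\infty$, so the number of finite ramified primes is odd. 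Two small slips: the local determinant at $p$ is $p^4ab$, not $p^5ab$, and it is the \emph{finite} ramification set, not the whole one, that has odd size.
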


\begin{proof}
We first pick a basis of $\Lambda_p$ realising the diagonal Gram matrix given in the table above and then apply $\hat{\psi}_{(p)}$. This gives us an even lattice $\Pi = V\cap \hat{\psi}_{(p)}(\widehat{\Lambda})$ of discriminant $2p^4$, which we may view as an even lattice of discriminant $2p$ in $(V,Q^{\left(\frac{1}{p}\right)})$ with local lattices: \[\Pi_q \sim\begin{cases} \langle p^{-1}, p^{-1}, 2p^{-1}\rangle \sim \langle 1,1,2p\rangle & \text{ if } q \nmid 2p\\ \left\langle \begin{pmatrix}0 & p^{-1}\\ p^{-1} & 0\end{pmatrix}, -2p^{-1}\right\rangle \sim  \left\langle\begin{pmatrix}0 & 1\\ 1 & 0\end{pmatrix}, -2p\right\rangle & \text{ if } q = 2\\ \langle 1,r,pr\rangle & \text{ if } q = p\equiv 1 \bmod 8\\ \langle 1,1,pr\rangle & \text{ if } q = p\equiv 3 \bmod 8\\ \langle 1,r,p\rangle & \text{ if } q=p\equiv 5 \bmod 8\\ \langle 1,1,p\rangle & \text{ if } q=p\equiv 7 \bmod 8\end{cases}\]

We have that $O(\widehat{\Lambda}) = O(\widehat{\Pi})$ (since $O(\Lambda_p) = O(\Pi_p)$ by Proposition \ref{p-mod} and $O(\Lambda_q) = O(\Pi_q)$ when $q\neq p$), and both Lemma \ref{samespace} and Lemma \ref{scaling} then imply an equality of Hecke modules: \[\mathcal{S}(O(V),O(\widehat{\Lambda}),W) = \mathcal{S}(O(V),O(\widehat{\Pi}),W)\] for every representation $W$ of $O(V)$.

By the recipe outlined in Subsection \ref{Ternary Lattices and Quaternion orders} we find that the even Clifford algebra of $(V,Q^{(\frac{1}{p})})$ is a definite quaternion algebra $B = \text{Cl}^0(V)$ of discriminant $p$. The order $\mathcal{O} = \text{Cl}^0(\Pi)$ has discriminant $\text{discrd}(\mathcal{O}) = p$ and so is maximal, hence residually unramified everywhere with $\varepsilon_p(\mathcal{O}) = -1$. The result then follows from the results of Hein-Tornaria-Voight (see Theorem \ref{thm: 6.12 in HTV}).
\end{proof}

\begin{example}
We give an example of Theorem \ref{genus1}, performing the computations using the Algebraic Modular Forms package in Magma \cite{magma}. We refer to \cite{hein2016orthogonal} for a more in-depth discussion of the practical computational details.

Let $\Lambda = \mathbb{Z}e_1 \oplus \mathbb{Z}e_2 \oplus \mathbb{Z}e_3\subset V$ be an even lattice with Gram matrix \[T = \begin{pmatrix}
		4 & 1 & -1\\ 
        1 & 28 & 9\\ 
        -1 & 9 & 28
	\end{pmatrix}.\] Here, $\text{disc}(\Lambda) = 2738 = 2 \cdot 37^2$. Computations with $p$-neighbours show that $\text{Cl}(\Lambda) = \{[\Lambda_1], [\Lambda_2]\}$ is of size two. The Hecke operators $T_p$ acting on the $2$-dimensional space of trivial weight orthogonal modular forms $\mathcal{M}(O(V), O(\widehat{\Lambda}), \mathbb{C})$ are given by: \begin{align*}
        T_2= \begin{pmatrix}
				1 & 2\\ 
                1 & 2
			\end{pmatrix},\quad T_3= \begin{pmatrix}
				2 & 2\\ 
                1 & 3
			\end{pmatrix},\quad T_5 = \begin{pmatrix}
				2 & 4\\
                2 & 4
			\end{pmatrix},\quad T_7 = \begin{pmatrix}
				2 & 6\\
                3 & 5
			\end{pmatrix},\quad \dots
        \end{align*}
A quick computation then reveals that the eigenforms and eigenvalues are given by:$$
\begin{tabular}{l l l l l l}
			$v_1 = [1,1]$ & $\gamma_{2} = 3$& $\gamma_{3} = 4$& $\gamma_{5} = 6$& $\gamma_{7} = 8$& $\dots$ \\
			$v_2 = [2,-1]$ &  $\lambda_{2} = 0$&  $\lambda_{3} = 1$&  $\lambda_{5} = 0$&  $\lambda_{7} = -1$& $\dots$
		\end{tabular}$$

Note that the first eigenform is the Eisenstein eigenform with eigenvalues $\lambda_p = p+1$. The other eigenform forms a basis for the $1$-dimensional space $\mathcal{S}(O(V), O(\widehat{\Lambda}), \mathbb{C})$ and has eigenvalues $\lambda_p = a_p$ (for $p \neq 37$), where: 
$$f = \sum_{n\geq 1} a_n q^n = q+q^3-2q^4-q^7-2q^9+3q^{11}-2q^{12}-4q^{13}+ \dots $$
is the $q$-expansion of the unique eigenform in $S_2^{\textup{new}}(\Gamma_0(37),-1)$ (having LMFDB label \href{https://www.lmfdb.org/ModularForm/GL2/Q/holomorphic/37/2/a/b/}{37.2.a.b}).

Note that there exists a newform $f_2\in S_2^{\textup{new}}(\Gamma_0(37),+1)$ with LMFDB label \href{https://www.lmfdb.org/ModularForm/GL2/Q/holomorphic/37/2/a/a/}{37.2.a.a}. In accordance with Theorem \ref{genus1}, we do not see this at trivial weight (but it can be seen if the weight is twisted by $\theta_{37}$). The same scenario presents itself also in higher weights.
\end{example}

\begin{rem}
One can roughly interpret the above result as telling us that spaces of orthogonal modular forms coming from Genus $1$ admit no newforms. All of the eigenforms that arise are explained by a genus of smaller discriminant $2p$ and so give rise to modular forms of level $p$ (as opposed to level $p^2$). This analogy is a good one to keep in mind for later, but we note that it is not completely accurate since although the lattices $\Lambda$ and $\Pi$ are different, the level structures $O(\widehat{\Lambda})$ and $O(\widehat{\Pi
})$ are equal (in contrast to the congruence subgroups $\Gamma_0(p^2)$ and $\Gamma_0(p)$ being unequal on the modular side).
\end{rem}

\subsection{Genus $2$}

The other four genera consist of odd lattices of discriminant $2p^2$, and we will see that Genus $2$ behaves similarly to Genus $1$. We introduce the linear map $\hat{\psi}_{(2)}\in\text{GL}(\widehat{V})$ with components: \[\psi_{(2),q} = \begin{cases}\begin{pmatrix}\frac{1}{2}&\frac{1}{2}&0\\\frac{1}{2}&-\frac{1}{2}&0\\0&0&1\end{pmatrix} & \text{ if } q = 2\\ \text{ id} & \text{ otherwise }\end{cases}.\] This map depends on a choice of basis for $V_2$ (which will usually be a basis for $\Lambda_2$ realising a given Gram matrix).

\begin{thm}\label{genus2} Let $\Lambda$ be a lattice representing Genus $2$. Then for each $k\geq 0$ and $d\in\{1,2,p,2p\}$ there exists a Hecke-equivariant isomorphism (for the algebra generated by the Hecke operators $T_q$ with $q\neq 2, p$ prime): \[\mathcal{S}(O(\widehat{\Lambda}),W_{2k}\otimes \theta_d) \cong S_{2k+2}^{p\textup{-new}}\left(\Gamma_0(2p),\varepsilon^{(2p)}_p\varepsilon^{(2p)}_d\right).\]
\end{thm}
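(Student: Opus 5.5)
The plan is to follow the proof of Theorem \ref{genus1}, but now modify the lattice at both bad primes. The local lattice at $p$ is the same as for Genus $1$. At $2$ we have $\Lambda_2\sim\langle 1,-1,-2\rangle$. The aim is a lattice $\Pi$ with $O(\widehat{\Lambda})=O(\widehat{\Pi})$ which, after rescaling $Q$, is even of discriminant $4p$. Its Clifford order should then be residually unramified, so that Theorem \ref{thm: 6.12 in HTV} applies.

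\textbf{Step 1 (the prime $p$).} Choose a basis of $\Lambda_p$ realising the diagonal Gram matrix $\langle a,p,pb\rangle$ from the table and apply $\hat\psi_{(p)}$. By Proposition \ref{p-mod} the local orthogonal group at $p$ is unchanged. After scaling by $1/p$ at $p$, the local lattice becomes $\langle pa,1,b\rangle$, which is unimodular of rank $2$ plus $p$-modular of rank $1$, exactly as in Theorem \ref{genus1}.

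\textbf{Step 2 (the prime $2$).} Choose a basis $v_1,v_2,v_3$ of $\Lambda_2$ realising $\langle 1,-1,-2\rangle$ and apply $\hat\psi_{(2)}$. This gives the index-$2$ superlattice with basis $\tfrac{v_1+v_2}{2},\tfrac{v_1-v_2}{2},v_3$ and Gram matrix $\left\langle\begin{pmatrix}0&\frac12\\ \frac12&0\end{pmatrix},-2\right\rangle$. Scaling $Q$ by $2$ turns this into $\left\langle\begin{pmatrix}0&1\\1&0\end{pmatrix},-4\right\rangle$. Under the same scaling $\Lambda_2$ becomes $\langle 2,-2,-4\rangle$, a sublattice of it. Proposition \ref{4-mod} with $a=1$ then gives equality of the two $2$-adic orthogonal groups. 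The scaling itself is harmless by Lemma \ref{scaling}.

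\textbf{Step 3 (assembling $\Pi$).} Set $\Pi=V\cap\hat\psi_{(2)}\hat\psi_{(p)}(\widehat{\Lambda})$ and scale $Q$ by $2/p$. One checks the local lattices of $\Pi$ place by place:
\begin{itemize}
\item at $q\nmid 2p$, $\Pi_q$ is unimodular, since $2/p$ is a unit there;
\item at $2$, $\Pi_2\sim\left\langle\begin{pmatrix}0&1\\1&0\end{pmatrix},-4p\right\rangle$ up to a unit;
\item at $p$, $\Pi_p$ is the lattice from Step 1, since $2$ is a unit there.
\end{itemize}
Hence $\Pi$ is even with $\operatorname{disc}(\Pi)=4p$. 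Since $O(\Lambda_q)=O(\Pi_q)$ for every $q$, Lemmas \ref{samespace} and \ref{scaling} identify the two Hecke modules for every weight $W$.

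\textbf{Step 4 (applying Hein--Tornaria--Voight).} Compute $\mathcal{O}=\mathrm{Cl}^0(\Pi)$, which has $\mathrm{discrd}(\mathcal{O})=2p$.
\begin{itemize}
\item At $p$: the local lattice agrees with the one in Theorem \ref{genus1}, so $\mathcal{O}_p$ is maximal in a division algebra and $\varepsilon_p(\mathcal{O})=-1$.
\item At $2$: the lattice $\langle H,-4u\rangle$ contains a hyperbolic plane, so $B_2$ is split, and the order should be the Eichler order of level $2$, giving $\varepsilon_2(\mathcal{O})=+1$. This can be verified from the explicit basis of $\mathcal{O}$ in Subsection \ref{Ternary Lattices and Quaternion orders}.
\end{itemize}
Thus $D=p$, $N^+=2$, $N^-=p$, and the only admissible $M$ is $M=p$. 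Theorem \ref{thm: 6.12 in HTV} then yields $S_{2k+2}^{p\text{-new}}(\Gamma_0(2p),\varepsilon^{(2p)}_p\varepsilon^{(2p)}_d)$ for each squarefree $d\mid 2p$, with Hecke-equivariance for $T_q$, $q\neq 2,p$.

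\textbf{Main obstacle.} I expect Step 2 to be the delicate point: the superlattice produced by $\hat\psi_{(2)}$ must really be the lattice covered by Proposition \ref{4-mod} after the correct scaling. Step 4 also needs care, namely confirming that the resulting $2$-adic order is Eichler (residually split) rather than residually ramified. If the direct identification is awkward, I would instead compare $\mathcal{O}_2$ with $\mathrm{Cl}^0$ of the standard even lattice $\langle H,-4\rangle$ using functoriality of the Clifford correspondence.
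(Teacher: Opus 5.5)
Your proposal is correct and is essentially the paper's proof: apply $\hat\psi_{(p)}$ (Proposition \ref{p-mod}), rescale by $2/p$, and pass to the index-$2$ superlattice via $\hat\psi_{(2)}$ (Proposition \ref{4-mod}) to get an even lattice of discriminant $4p$. Then invoke Theorem \ref{thm: 6.12 in HTV} for its Clifford order of reduced discriminant $2p$ in the algebra of discriminant $p$. The only differences are cosmetic: you perform both local modifications and the scaling at once rather than sequentially. Your Step 4 concern is settled as in the paper: an order of squarefree reduced discriminant $2p$ in an algebra of discriminant $p$ is automatically Eichler, so $\varepsilon_2(\mathcal{O})=1$ and $\varepsilon_p(\mathcal{O})=-1$ without any explicit basis computation.
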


\begin{proof}
We again pick a basis of $\Lambda_p$ realising the diagonal Gram matrix given in the table above and then apply $\hat{\psi}_{(p)}$. This gives us an odd lattice $\Pi = V\cap \hat{\psi}_{(p)}(\widehat{\Lambda})$ of discriminant $2p^4$, which we may view as an odd lattice of discriminant $2p$ in $(V,Q^{\left(\frac{1}{p}\right)})$ with local lattices: \[\Pi_q \sim\begin{cases} \langle p^{-1}, p^{-1}, 2p^{-1}\rangle \sim \langle 1,1,2p\rangle & \text{ if } q \nmid 2p\\ \left\langle p^{-1},-p^{-1},-2p^{-1}\right\rangle \sim  \left\langle p,-p,-2p\right\rangle & \text{ if } q = 2\\ \langle 1,r,pr\rangle & \text{ if } q = p\equiv 1 \bmod 8\\ \langle 1,1,pr\rangle & \text{ if } q = p\equiv 3 \bmod 8\\ \langle 1,r,p\rangle & \text{ if } q=p\equiv 5 \bmod 8\\ \langle 1,1,p\rangle & \text{ if } q=p\equiv 7 \bmod 8\end{cases}\]

We have that $O(\widehat{\Lambda}) = O(\widehat{\Pi})$ (since $O(\Lambda_p) = O(\Pi_p)$ by Proposition \ref{p-mod} and $O(\Lambda_q) = O(\Pi_q)$ when $q\neq p$), and both Lemma \ref{samespace} and Lemma \ref{scaling} then imply an equality of Hecke modules: \[\mathcal{S}(O(V),O(\widehat{\Lambda}),W) = \mathcal{S}(O(V),O(\widehat{\Pi}),W)\] for every representation $W$ of $O(V)$.

Note that we cannot apply the results of Hein-Tornaria-Voight yet, since they require an even lattice (their results implicitly require an integral lattice with respect to the quadratic form $Q$, as opposed to the bilinear form $T$). 

 Scaling the quadratic form by $2$ lets us view $\Pi$ as an even lattice of discriminant $16p$ in $(V,Q^{\left(\frac{2}{p}\right)})$, with local lattice $\Pi_2\sim \langle 2p,-2p,-4p\rangle \sim \langle 2,-2,-4p\rangle$. However, further reductions of the discriminant are possible. The lattice $\Omega = V\cap \hat{\psi}_{(2)}(\widehat{\Pi})$ is an even lattice of discriminant $4p$ in $(V,Q^{\left(\frac{2}{p}\right)})$, with local lattices \begin{align*}\Omega_q\sim\begin{cases} \langle2,2,4p\rangle \sim \langle 1,1,p\rangle\qquad & \text{ if } q\nmid 2p\\ 
\left\langle \begin{pmatrix}0 & 1\\ 1 & 0\end{pmatrix},-4p\right\rangle & \text{ if } q = 2\\ \langle 2,2r,2pr\rangle\sim \langle 1,r,pr\rangle &\text{ if } p\equiv 1 \bmod 8\\ \langle 2,2,2pr\rangle
    \sim \langle 1,1,p\rangle &\text{ if } p\equiv 3 \bmod 8\\
    \langle 2,4,2p\rangle\sim\langle 1,2,2p\rangle &\text{ if } p\equiv 5 \bmod 8\\
    \langle 2,2,2p\rangle \sim \langle 1,1,p\rangle &\text{ if } p\equiv 7 \bmod 8\\
\end{cases}\end{align*}
 
We have that $O(\widehat{\Omega}) = O(\widehat{\Pi})$ (since $O(\Omega_2) = O(\Pi_2)$ by Proposition \ref{4-mod} and $\Pi_q = \Omega_q$ when $q\neq 2$). Another application of Lemma \ref{samespace} and Lemma \ref{scaling} now implies that there is an equality of Hecke modules: \[\mathcal{S}(O(V),O(\widehat{\Pi}),W) = \mathcal{S}(O(V),O(\widehat{\Omega}),W)\] for every representation $W$ of $O(V)$.

By the recipe outlined in Subsection \ref{Ternary Lattices and Quaternion orders} we find that the even Clifford algebra of $(V,Q^{(\frac{2}{p})})$ is a definite quaternion algebra $B = \text{Cl}^0(V)$ of discriminant $p$. The order $\mathcal{O} = \text{Cl}^0(\Omega)$ has discriminant $\text{discrd}(\mathcal{O}) = 2p$ and so is Eichler, hence residually unramified everywhere with $\varepsilon_2(\mathcal{O}) = 1$ and $\varepsilon_p(\mathcal{O}) = -1$. The result then follows from the results of Hein-Tornaria-Voight (see Theorem \ref{thm: 6.12 in HTV}).
\end{proof}

\begin{example}
We provide an example in non-trivial weight. Let $\Lambda\subset V$ be an odd lattice with Gram matrix \[T = \begin{pmatrix}
		2 & 0 & 0\\ 
        0 & 11 & 0\\ 
        0 & 0 & 11
	\end{pmatrix}.\]
    One can easily check that $\Lambda$ is a representative for Genus $2$ and that it has discriminant $\text{disc}(\Lambda) = 242 = 2 \cdot 11^2$.
    
    At weight $W_8$ we compute that $\text{dim}(\mathcal{S}(O(V),O(\widehat{\Lambda}),W_8)) = 7$, and the characteristic polynomials of $T_p$ (for $p\neq 11$) have irreducible factors $F_{p,1}(x)$ and $F_{p,2}(x)$ as follows: 

\begin{align*}
    F_{2,1}(x) &= x^5 - 16x^4 - 1506x^3 + 6428x^2 + 619552x + 4247232\\ F_{2,2}(x) &= (x + 16)^2\\ F_{3,1}(x) &= x^5 - 112x^4 - 80330x^3 + 6248892x^2 + 1106354961x + 6873235452\\ F_{3,2}(x) &= x^2 + 21x - 17892\\ F_{5,1}(x) &= x^5 - 1594x^4 - 6694866x^3 + 8153795144x^2 + 11912315141065x - 8650710939654150\\ F_{5,2}(x) &= x^2 + 521x - 2023290\\ &\vdots
\end{align*}

We match these with Galois orbits of newforms $f \in S_{10}^{\text{new}}(\Gamma_0(11))$ and $g \in S_{10}^{\text{new}}(\Gamma_0(22))$ respectively (with LMFDB labels \href{https://www.lmfdb.org/ModularForm/GL2/Q/holomorphic/11/10/a/b/}{11.10.a.b} and \href{https://www.lmfdb.org/ModularForm/GL2/Q/holomorphic/22/10/a/d/}{22.10.a.d}). These forms are both $11$-new and have Atkin-Lehner eigenvalues $-1$ at $11$ and in the second case $+1$ at $2$ (as predicted by Theorem \ref{genus2}). Note that there exist other newforms of levels $11$ and $22$ but they have incorrect Atkin-Lehner eigenvalues, hence we do not see them in this calculation (but we would if we twisted the weight by the characters $\theta_2,\theta_{11}$ and $\theta_{22}$).
\end{example}

\subsection{Genus $3$}

For a lattice $\Lambda$ in Genus $3$ we may once again apply the map $\hat{\psi}_{(p)}$ to give an odd lattice $\Pi = V\cap \hat{\psi}_{(p)}(\widehat{\Lambda})$ of discriminant $2p^4$, which may be viewed as an odd lattice of discriminant $2p$ in $(V,Q^{(\frac{1}{p})})$. Once again, we have $O(\widehat{\Lambda}) = O(\widehat{\Pi})$ by Proposition \ref{p-mod} and Lemma \ref{samespace} and Lemma \ref{scaling} give an equality of Hecke modules: \[\mathcal{S}(O(V),O(\widehat{\Lambda}),W) = \mathcal{S}(O(V),O(\widehat{\Pi}),W)\] for every representation $W$ of $O(V)$.

We must again address the fact that $\Pi$ is odd. There are two potential ways to deal with this. One way is to apply the map $\hat{\psi}_{(2)}^{-1}$, giving an even lattice $\Sigma = V\cap\hat{\psi}_{(2)}^{-1}(\widehat{\Pi})$ of discriminant $8p$ in $(V,Q^{\left(\frac{1}{p}\right)})$. The local $2$-adic lattice of $\Sigma$ satisfies:

\begin{align*}\Sigma_2 &\sim  \langle 2p, 2p, 2p \rangle \sim\begin{cases}\langle 2,2,2\rangle & \text{ if } p\equiv 1 \bmod 8\\ \langle -10,-10,-10\rangle & \text{ if } p\equiv 3 \bmod 8\\ \langle 10,10,10\rangle & \text{ if } p\equiv 5 \bmod 8\\ \langle -2,-2,-2\rangle & \text{ if } p\equiv 7 \bmod 8\\ \end{cases}\end{align*} However, unlike the previous genus, Proposition \ref{2-mod}(ii) can only provide us with an inclusion $O(\Pi_2)\subseteq O(\Sigma_2)$, which induces a Hecke-equivariant inclusion: \[\mathcal{S}(O(V),O(\widehat{\Sigma}),W) \longhookrightarrow \mathcal{S}(O(V),O(\widehat{\Pi}),W)\] for every representation $W$ of $O(V)$. We are unable to conclude that $O(\Pi_2) = O(\Sigma_2)$, since it simply is not true (e.g. see Table $4.1$ of Allcock \cite{allcock2025}). We are also able to observe this fact by direct computation as follows.

\begin{example}
Consider the lattices $\Lambda$ and $\Sigma$ with Gram matrices
\begin{align*}
    T_{\Lambda} = \begin{pmatrix} 1& 0 & 0 \\ 0& 11 & 0 \\ 0& 0& 22
    \end{pmatrix} \quad \text{and} \quad & T_{\Sigma} = \begin{pmatrix} 2& 0 & 0 \\ 0& 4 & 2 \\ 0& 2& 12
    \end{pmatrix}
\end{align*}
and discriminants $\text{disc}(\Lambda) = 242 = 2\cdot 11^2$ and $\text{disc}(\Sigma) = 88 = 8\cdot 11$ respectively. It is easily checked that these lattices match the description of the two lattices of interest. A quick computation reveals that $|\text{Cl}(\Lambda)| = 2$ and $|\text{Cl}(\Sigma)| = 1$, showing that $O(\widehat{\Lambda}) \neq O(\widehat{\Sigma})$ (the issue must then be that $O(\Lambda_2) \neq O(\Sigma_2)$ since the local orthogonal groups are equal at all other primes). Further computation reveals that the characteristic polynomial of $T_p$ acting on the second class set always divides the one coming from the action on the first class set, in agreement with the embedding above.
\end{example}

As a consequence of the above discussion, we conclude that for Genus $3$ we have both an oldform subspace (coming from orthogonal modular forms for $\Sigma$) and a newform subspace.

The second way to proceed is to simply scale the quadratic map by $2$ and view $\Pi$ as an even lattice in $(V,Q^{\left(\frac{2}{p}\right)})$. This space gives rise to the quaternion algebra $B = \text{Cl}^0(V)$ of discriminant $\text{disc}(B) = 2$, and the corresponding order $\mathcal{O} = \text{Cl}^0(\Pi)$ is an order of discriminant $8p$ with $\varepsilon_p(\mathcal{O}) = 1$ and $\varepsilon_2(\mathcal{O}) = 0$ (i.e. it is residually ramified at $2$). This complicates matters, since we are now unable to use the results of Hein-Tornaria-Voight. However, computational evidence suggests that the following conjecture is true.

\begin{conj}\label{genus3}
Let $\Lambda$ be a lattice representing Genus $3$. Then for each $k\geq 0$ and $d\in\{1,2,p,2p\}$ there exists a Hecke-equivariant isomorphism (for the algebra generated by the Hecke operators $T_q$ with $q\neq 2,p$ prime): \[\mathcal{S}(O(\widehat{\Lambda}),W_{2k}\otimes \theta_d) \cong S_{2k+2}^{2\textup{-new}}\left(\Gamma_0(4p),\varepsilon^{(4p)}_2\varepsilon^{(4p)}_d\right).\]
\end{conj}

\begin{example}
Let $\Lambda\subset V$ be an odd lattice representing Genus $3$ with Gram matrix \[T = \begin{pmatrix}
		2 & 0 & 0\\ 
        0 & 5 & 0\\ 
        0 & 0 & 5
	\end{pmatrix}\]
    and discriminant $\text{disc}(\Lambda) = 50 = 2 \cdot 5^2$. As in previous examples, the class set is trivial and so we must use higher weights in order to see any non-trivial modular data. At weight $W_8$ we compute that $\text{dim}(\mathcal{S}(O(V),O(\widehat{\Lambda}),W_8)) = 5$. The four characteristic polynomials of $T_p$ (for $p\neq 5$) have irreducible factors $F_{p,1}(x), F_{p,2}(x),F_{p,3}(x)$ and $F_{p,4}(x)$ as follows: 
\[
\renewcommand{\arraystretch}{1.25}
\begin{array}{c|llll}
 p & F_{p,1}(x) & F_{p,2}(x) & F_{p,3}(x) & F_{p,4}(x) \\ \hline
 2 & x-16  & x      & x-16   & x^2 \\
 3 & x+156 & x-228  & x-174  & x^2+260x-3324 \\
 7 & x+952 & x+6328 & x-4658 & x^2+380x-96250364
\end{array}
\]

We match these with newforms $f \in S_{10}^{\text{new}}(\Gamma_0(2)), g \in S_{10}^{\text{new}}(\Gamma_0(4)), h \in S_{10}^{\text{new}}(\Gamma_0(10))$ and a Galois orbit of newforms $j \in S_{10}^{\text{new}}(\Gamma_0(20))$ respectively (with LMFDB labels \href{https://www.lmfdb.org/ModularForm/GL2/Q/holomorphic/2/10/a/a/}{2.10.a.a},  \href{https://www.lmfdb.org/ModularForm/GL2/Q/holomorphic/4/10/a/a/}{4.10.a.d}, \href{https://www.lmfdb.org/ModularForm/GL2/Q/holomorphic/10/10/a/c/}{10.10.a.c} and \href{https://www.lmfdb.org/ModularForm/GL2/Q/holomorphic/20/10/a/b/}{20.10.a.b}). These are precisely the forms that are $2$-new of level $20$ and have Atkin-Lehner eigenvalues $-1$ at $2$ and $+1$ at $5$ (as predicted by Theorem \ref{genus3}). Note that there exist other newforms of levels $10$ and $20$ but they have incorrect Atkin-Lehner eigenvalues and we would reach them via twisting the weight. 

Additionally, we point out that no forms of level $40$ occur (which might be expected to occur, given that the corresponding order has $\text{discrd}(\mathcal{O}) = 8p = 40$). In particular, the form \href{https://www.lmfdb.org/ModularForm/GL2/Q/holomorphic/40/10/a/c/}{40.10.a.c} has the correct Atkin-Lehner eigenvalues but does not feature in this computation.
\end{example}

\subsection{Genus $4$ and Genus $5$}

The remaining two genera are even more problematic. Firstly, we are unable to apply the map $\hat{\psi}_{(p)}$ and reduce the power of $p$ in the discriminant. One might be tempted to try other transformations, e.g.\ go from a local Gram matrix of the form $\langle 1,a,p^2b\rangle$ to $\langle 1,a,b\rangle$ by scaling the third basis vector by $\frac{1}{p}$, but no such transformations seem to preserve the orthogonal group (e.g.\ see Table $4.1$ of Allcock \cite{allcock2025}). This issue might not be so much of a problem since the corresponding quaternion algebra $B = \text{Cl}^0(V)$ has discriminant $2$ in this case, and the two local orders at $p$ are unramified with Eichler invariants $\varepsilon_p(\mathcal{O}_p) = 1$ or $\varepsilon_p(\mathcal{O}_p) = -1$ according to whether $\Lambda$ is in Genus $4$ or Genus $5$ respectively (the first order is Eichler and the second is not). 

The real issue is instead at the prime $2$, since all attempts by the authors to turn $\Lambda$ into an even lattice resulted in quaternion orders that are residually ramified at $2$, i.e.\ have $\varepsilon_2(\mathcal{O}) = 0$. Proving results here would require a more intimate knowledge of the Jacquet-Langlands correspondence for such orders, which the authors were unable to find in the literature. It would also require an explicit and precise understanding of the transfer from orthogonal modular forms for $\Lambda$ to quaternionic modular forms for $\mathcal{O}$ via the even Clifford functor. For example, in \cite{heintornariavoight} it is proved that $\mathcal{S}(O(V),O(\widehat{\Lambda}),W)$ can be identified with a space of quaternionic modular forms for the order $\mathcal{O}$, i.e.\ the fixed subspace under the action of the Atkin-Lehner group $\text{AL}(\mathcal{O}) = \prod_{p|\text{discrd}(\mathcal{O})}(N_{B_p^{\times}}(\mathcal{O}_p)/\mathbb{Q}_p^{\times}\mathcal{O}_p^{\times})$. The fact that this group is not generally isomorphic to $\prod_{p|\text{discrd}(\mathcal{O})}(\mathbb{Z}/2\mathbb{Z})$ makes it trickier to generalise the results of Hein-Tornaria-Voight.

The best that the authors are able to offer at present are conjectures based on computational evidence.

\begin{conj}\label{genus4} Let $\Lambda$ be a lattice representing Genus $4$. Then for each $k\geq 0$ and $d\in\{1,2,p,2p\}$ there exists a Hecke-equivariant isomorphism (for the algebra generated by the Hecke operators $T_q$ with $q\neq 2,p$ prime) so that:
$$ \mathcal{S}(O(\widehat{\Lambda}),W_{2k}\otimes \theta_d) \cong S^{2\textup{-new}}_{2k+2}\left(\Gamma_0(4p^2),\varepsilon_2^{(4p^2)}\varepsilon_d^{(4p^2)}\right).$$
\end{conj}

\begin{example}
Let $\Lambda$ be a lattice representing Genus $4$ with Gram matrix
\[T = \begin{pmatrix}
        1 & 0 & 0\\
        0 & 2 & 0\\
        0 & 0 & 121
    \end{pmatrix},\]
so that $\text{disc}(\Lambda) = 242 = 2\cdot 11^2$. The class set is trivial, and at weight $W_4$ we compute
$\dim(\mathcal{S}(O(V),O(\widehat{\Lambda}),W_4)\big) = 45$. The characteristic polynomials of $T_3$, $T_5$ and $T_7$ split in the following way and match Galois orbits of newforms (for forms of dimension at least $3$ we only give the degree of the factor):
\[
\renewcommand{\arraystretch}{1.15}
\begin{array}{l|l|l|l|c}
F_{3,i}(x) & F_{5,i}(x) & F_{7,i}(x) & \text{newform orbit} & (w_2,w_{11})\\ \hline
(x+12)^2 & (x-54)^2 & (x+88)^2 & \href{https://www.lmfdb.org/ModularForm/GL2/Q/holomorphic/4/6/a/a/}{4.6.a.a} & (-,*)\\ \hline
x+29 & x+31 & x+230 & \href{https://www.lmfdb.org/ModularForm/GL2/Q/holomorphic/22/6/a/c/}{22.6.a.c} & (-,-)\\
x^2-29x+12 & x^2+13x-4914 & x^2+14x-7088 & \href{https://www.lmfdb.org/ModularForm/GL2/Q/holomorphic/22/6/a/d/}{22.6.a.d} & (-,+)\\
x-7 & x+79 & x+50 & \href{https://www.lmfdb.org/ModularForm/GL2/Q/holomorphic/44/6/a/a/}{44.6.a.a} & (-,-)\\
x^2+6x-487 & x^2+22x-1863 & x^2-268x+13492 & \href{https://www.lmfdb.org/ModularForm/GL2/Q/holomorphic/44/6/a/b/}{44.6.a.b} & (-,+)\\ \hline
\deg 3 & \deg 3 & \deg 3 & \href{https://www.lmfdb.org/ModularForm/GL2/Q/holomorphic/242/6/a/j/}{242.6.a.j} & (-,+)\\
\deg 4 & \deg 4 & \deg 4 & \href{https://www.lmfdb.org/ModularForm/GL2/Q/holomorphic/242/6/a/n/}{242.6.a.n} & (-,+)\\
\deg 6 & \deg 6 & \deg 6 & \href{https://www.lmfdb.org/ModularForm/GL2/Q/holomorphic/242/6/a/p/}{242.6.a.p} & (-,+)\\
(x+15)^2 & (x-9)^2 & x^2-44256 & \href{https://www.lmfdb.org/ModularForm/GL2/Q/holomorphic/484/6/a/c/}{484.6.a.c} & (-,+)\\
x^2-31x+232 & x^2+57x-6126 & x^2 & \href{https://www.lmfdb.org/ModularForm/GL2/Q/holomorphic/484/6/a/e/}{484.6.a.e} & (-,+)\\
\deg 10 & \deg 10 & \deg 10 & \href{https://www.lmfdb.org/ModularForm/GL2/Q/holomorphic/484/6/a/h/}{484.6.a.h} & (-,+)\\
(\deg 5)^2 & (\deg 5)^2 & \deg 10 & \href{https://www.lmfdb.org/ModularForm/GL2/Q/holomorphic/484/6/a/j/}{484.6.a.j} & (-,+)
\end{array}
\]
These are precisely the forms that we expect to see given Conjecture \ref{genus4} (for example, we do not see newforms with Atkin-Lehner sign $+1$ at 2).
Note that the newform of level $4$ appears twice and that we see newforms of levels $22$ and $44$ with Atkin-Lehner signs $(-1,-1)$. Both of these are new phenomena, but are to be expected since oldforms of levels $N$ and $Np$ can lift in multiple ways to give eigenforms of level $Np^2$ with a variety of Atkin-Lehner signs. We did not observe this behaviour earlier since our levels were generally square-free.

Increasing the weight to $W_8$ allows us to also see the level $2$ newform of weight $10$  (\href{https://www.lmfdb.org/ModularForm/GL2/Q/holomorphic/2/10/a/a/}{2.10.a.a}) with multiplicity two. This justifies the need for the $2$-new subspace as opposed to some other proper subspace.
\end{example}

\begin{conj}\label{genus5} Let $\Lambda$ be a lattice representing Genus $5$. Then for each $k\geq 0$ and $d\in\{1,2,p,2p\}$ there exists a Hecke-equivariant isomorphism (for the algebra generated by the Hecke operators $T_q$ with $q\neq 2,p$ prime) so that:
$$\mathcal{S}(O(\widehat{\Lambda}),W_{2k}\otimes \theta_d) \cong S^{2\textup{-new}}_{2k+2}\left(\Gamma_0(4),\varepsilon^{(4)}_2\varepsilon^{(4)}_d\right)\oplus S^{2p^2\textup{-new}}_{2k+2}\left(\Gamma_0(4p^2),\varepsilon^{(4p^2)}_2\varepsilon^{(4p^2)}_d\right).$$
\end{conj}

\begin{example}
Let $\Lambda$ be a lattice representing Genus $5$ with Gram matrix
\[T = \begin{pmatrix}
        1 & 0 & 0\\
        0 & 2 & 0\\
        0 & 0 & 25
    \end{pmatrix},\]
so that $\text{disc}(\Lambda) = 50 = 2\cdot 5^2$. The class set is trivial and at weight $W_8$ we compute
$\dim\big(\mathcal{S}(O(V),O(\widehat{\Lambda}),W_8)\big) = 13$. The Hecke operators $T_3$ and $T_7$ split as follows:
\begin{align*}
    F_{3,1}(x) &= x + 156, & F_{7,1}(x) &= x + 952,\\
    F_{3,2}(x) &= x - 228, & F_{7,2}(x) &= x + 6328,\\
    F_{3,3}(x) &= x + 46, & F_{7,3}(x) &= x - 10318,\\
    F_{3,4}(x) &= x - 204, & F_{7,4}(x) &= x + 5432,\\
    F_{3,5}(x) &= x^2 - 68x - 24069, & F_{7,5}(x) &= x^2 - 56x - 44496116,\\
    F_{3,6}(x) &= x + 228, & F_{7,6}(x) &= x - 6328,\\
    F_{3,7}(x) &= x - 48, & F_{7,7}(x) &= x - 532,\\
    F_{3,8}(x) &= x^2 - 260x - 3324, & F_{7,8}(x) &= x^2 - 380x - 96250364,\\
    F_{3,9}(x) &= x^3 + 73x^2 - 25209x - 1912257, & F_{7,9}(x) &= x^3 + 1482x^2 - 107965284x + 170184217112.
\end{align*}
The factors $F_{q,i}$ can be matched with Galois orbits of newforms with respective LMFDB symbols
    \href{https://www.lmfdb.org/ModularForm/GL2/Q/holomorphic/2/10/a/a/}{2.10.a.a},
    \href{https://www.lmfdb.org/ModularForm/GL2/Q/holomorphic/4/10/a/a/}{4.10.a.a},
    \href{https://www.lmfdb.org/ModularForm/GL2/Q/holomorphic/50/10/a/d/}{50.10.a.d},
    \href{https://www.lmfdb.org/ModularForm/GL2/Q/holomorphic/50/10/a/f/}{50.10.a.f},
    \href{https://www.lmfdb.org/ModularForm/GL2/Q/holomorphic/50/10/a/j/}{50.10.a.j},
    \href{https://www.lmfdb.org/ModularForm/GL2/Q/holomorphic/100/10/a/a/}{100.10.a.a},
    \href{https://www.lmfdb.org/ModularForm/GL2/Q/holomorphic/100/10/a/b/}{100.10.a.b},
    \href{https://www.lmfdb.org/ModularForm/GL2/Q/holomorphic/100/10/a/c/}{100.10.a.c} and
    \href{https://www.lmfdb.org/ModularForm/GL2/Q/holomorphic/100/10/a/d/}{100.10.a.d}. All of these forms have Atkin-Lehner sign $w_2=-1$, and the forms of level $50$ and $100$ have $w_5=+1$.
Moreover, they are exactly the newforms that we expect to see in Conjecture \ref{genus5}. No newforms of level
$10$ or $20$ occur for this genus. For example, the newforms
\href{https://www.lmfdb.org/ModularForm/GL2/Q/holomorphic/10/10/a/c/}{10.10.a.c} and
\href{https://www.lmfdb.org/ModularForm/GL2/Q/holomorphic/20/10/a/b/}{20.10.a.b},
do not appear, even though their Atkin-Lehner signs are $(w_2,w_5)=(-,+)$. As with the previous example, it is also possible to see some forms with Atkin-Lehner signs $(w_2,w_5) = (-1,-1)$, without needing to twist the weight. This is all in agreeent with Conjecture \ref{genus5}.

 Note also that this time we only see the newforms of level $2$ and $4$ appear once. This is one of the main differences between Conjecture \ref{genus4} and Conjecture \ref{genus5} (the direct sum decomposition in this case is explicitly considering this form as new at level $4$, as opposed to considering it via its lifts to level $100$).
\end{example}

\section{Genera of half-discriminant $4$}

In the last subsection we considered the five genera of discriminant $2p^2$ for $p$ an odd prime. In this subsection we now consider the case $p=2$.  The possible local lattices are as follows.

\vspace{0.1in}
\noindent\underline{\textbf{q $\neq$ 2}}

\vspace{0.1in}
\noindent Note that $\Lambda_q$ is unimodular if $q\neq 2$ and so \[\Lambda_q \sim \langle 1,1,\text{disc}(\Lambda_q)\rangle\sim \langle 1,1,8\rangle\sim\langle 1,1,2\rangle\] 

\noindent\underline{\textbf{q = 2}}

\vspace{0.1in}
\noindent Here the discriminant is $\text{disc}(\Lambda_2) = [8] \in\mathbb{Z}_2/(\mathbb{Z}_2^{\times})^2$ and a simple calculation gives the following possibilities (with their Conway-Sloane symbols, Types and Hasse symbols):

\vspace{0.1in}
\begin{center}
\begin{tabular}{c|c|c|c}
$\Lambda_2$ & Symbol & Type & $S(V_2)$\\ \hline
$\left\langle\begin{pmatrix}0 & 1\\ 1 & 0\end{pmatrix},-8\right\rangle$ & $1^{+2}_{\text{II}}8^{+1}_{-1}$ & II & -1  \\
$\left\langle\begin{pmatrix}2 & 1\\ 1 & 2\end{pmatrix},-40\right\rangle$ & $1^{-2}_{\text{II}}8^{-1}_{-5}$ & II & -1 \\
$\langle 2,-2,-2\rangle$ & $2_{-1}^{+3}$ &II & -1  \\ 
$\langle 1,-2,-4\rangle$ & $[1^{+1}2^{+1}4^{+1}]_7$ &I&-1 \\ 
$\langle -1, -1, 8\rangle$ & $1^{+2}_{6}8^{+1}_{1} $ & I & -1  \\
$\langle 1,5,40\rangle$ & $1^{-2}_{6}8^{-1}_{5}$ & I & -1  \\
$\langle 1,-1,-8\rangle$ & $1^{+2}_{0}8^{+1}_{-1} $ & I & -1\\
$\langle 2,2,2\rangle $ & $ 2^{+3}_{3}$ & II & +1 \\
$\langle 1,2,4\rangle $ & $[1^{+1}2^{+1}4^{+1}]_3$ &I & +1 \\
$\langle -1,-5,40\rangle $ & $1^{-2}_{2}8^{-1}_5$ &I & +1 \\
$\langle 1,1,8\rangle $ & $ 1^{+2}_2 8^{+1}_1$ &I & +1\\
$\langle 1,-5,-40\rangle $ & $1^{-2}_48^{-1}_{-5}$ &I & +1\\
\end{tabular} \\
\end{center}

Now we can classify the genera of such lattices. Since $(V,Q)$ is positive definite we have $S(V_{\infty}) = 1$. We also have $S(V_q) = 1$ for any $q\neq 2$, and so each possible genus for $\Lambda$ must give local lattice satisfying $S(V_2) = 1$ in order to satisfy the compatibility relations for Hasse symbols. Hence there are five possible genera of interest, with Gram matrices locally at $2$ given below:

\begin{center}
\begin{tabular}{c|c}
Label & $\Lambda_2$ \\ \hline
A & $\langle 2,2,2\rangle$  \\
B & $\langle 1,2,4\rangle $  \\
C& $\langle -1,-5,40\rangle $  \\
D& $\langle 1,1,8\rangle $ \\
E& $\langle 1,-5,-40\rangle $ \\
\end{tabular} \\
\end{center}

 Each such genus satisfies $|\mathrm{Cl}(\Lambda)| = 1$, so that when $W = \mathbb{C}$ is the trivial representation we find that \[\mathcal{M}(O(V),O(\widehat{\Lambda}),W) = \mathbb{C}\mathbbm{1}\] is $1$-dimensional spanned by the Eisenstein form with Hecke eigenvalues $p+1$ for all primes $p\neq 2$. Hence, for trivial weight there is no modular data to be found (which will agree with the results below, since the analogous classical modular forms will not exist either!). However, for other weights there may still exist non-trivial orthogonal modular forms and so these genera are worthy of study.

\subsection{Genus A}

This is the unique even genus of discriminant $8$. Before proving the relevant transfer result we introduce the linear map $\hat{\eta}_{(2)}\in\text{GL}(\widehat{V})$ with local components: 

\[\eta_{(2),q} = \begin{cases}\begin{pmatrix}1 & 0 & 1\\ 0 & 1 & 1\\ -2 & -2 & 2\end{pmatrix} & \text{ if } q = 2\\ \text{id} & \text{ if } q\neq 2\end{cases}.\] 

Again, this map depends on a choice of basis for $V_2$ (which will usually be a basis for $\Lambda_2$ realising a given Gram matrix).

\begin{thm}\label{genusA}
    Let $\Lambda$ be a lattice representing Genus \textup{A}. Then for each $k\geq 0$ and $d\in\{1,2\}$ there exists a Hecke-equivariant isomorphism (for the algebra generated by the Hecke operators $T_q$ with $q\neq 2$ prime): \[\mathcal{S}(O(V),O(\widehat{\Lambda}),W_{2k}\otimes \theta_d) \cong S_{2k+2}^{\textup{new}}\left(\Gamma_0(2),\varepsilon^{(2)}_2\varepsilon^{(2)}_d\right).\]
\end{thm}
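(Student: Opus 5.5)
The plan is to follow the template of Theorem \ref{genus1}: transform $\Lambda$ at the prime $2$ into an even lattice whose even Clifford order is maximal, while keeping the adelic orthogonal group fixed, and then quote Theorem \ref{thm: 6.12 in HTV}. The relevant local tool should be Proposition \ref{2-unimod}. The map $\hat{\eta}_{(2)}$ has exactly the shape of the index-$2$ sublattice appearing there, namely the choice $\alpha=\beta=1$ with last basis vector $2v_3$, up to a harmless change of sign and scale in the third vector.

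First I pass to the scaled space $(V,Q^{(\frac12)})$. There $\Lambda$ becomes an odd lattice of discriminant $1$, i.e.\ unimodular everywhere, with $\Lambda_2\sim\langle1,1,1\rangle$. Lemma \ref{scaling} shows that the Hecke modules do not change under this scaling.

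Next I choose a basis $\{v_1,v_2,v_3\}$ of $\Lambda_2$ realising $\langle 1,1,1\rangle$ and set $\Pi=V\cap\hat{\eta}_{(2)}(\widehat{\Lambda})$. A direct computation gives new basis vectors $v_1+v_3,\ v_2+v_3,\ -2v_1-2v_2+2v_3$. Their Gram matrix is $\left\langle\begin{pmatrix}2&1\\1&2\end{pmatrix},12\right\rangle$, and this agrees with Proposition \ref{2-unimod} for $a=b=c=1$, where $\Delta=3\equiv 3\bmod 8$ gives $\langle\begin{pmatrix}2&1\\1&2\end{pmatrix},12abc\rangle$.

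I expect the main obstacle to be this middle step: checking that the image lattice generated by $\hat{\eta}_{(2)}$ really is the index-$2$ sublattice $\Pi_2$ of Proposition \ref{2-unimod}. The third vector is $2(v_3-v_1-v_2)$ rather than $2v_3$, so one has to verify the index and the lattice identity. A cleaner fallback is to define $\Pi_2$ directly as in Proposition \ref{2-unimod} and use $\hat{\eta}_{(2)}$ only as a name for it.

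Once that is settled, Proposition \ref{2-unimod} gives $O(\Lambda_2)=O(\Pi_2)$. For $q\neq 2$ we have $\Pi_q=\Lambda_q$, so $O(\widehat{\Lambda})=O(\widehat{\Pi})$. Lemma \ref{samespace} then gives an equality of Hecke modules for every weight $W$, including $W_{2k}\otimes\theta_d$. The lattice $\Pi$ is even in $(V,Q^{(\frac12)})$ with discriminant $4$, so it has half-discriminant $2$.

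Finally, by the recipe of Subsection \ref{Ternary Lattices and Quaternion orders}, $\mathcal{O}=\mathrm{Cl}^0(\Pi)$ has $\mathrm{discrd}(\mathcal{O})=2$. The algebra $B=\mathrm{Cl}^0(V)$ is definite, so its discriminant is a product of an odd number of primes dividing $2$, forcing $D=2$. Hence $\mathcal{O}$ is maximal, so it is residually unramified with $\varepsilon_2(\mathcal{O})=-1$, and $N=N^-=D=2$, $N^+=1$.

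Theorem \ref{thm: 6.12 in HTV} now applies with only $M=2$ allowed (since $v_2(M)\equiv v_2(N)\bmod 2$). This gives $S^{2\text{-new}}_{2k+2}(\Gamma_0(2),\varepsilon_2^{(2)}\varepsilon_d^{(2)})=S^{\text{new}}_{2k+2}(\Gamma_0(2),\varepsilon_2^{(2)}\varepsilon_d^{(2)})$ for $d\in\{1,2\}$, which is the claimed isomorphism.
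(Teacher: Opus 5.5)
Your proposal is correct and follows the paper's proof essentially step for step: rescale to $(V,Q^{(\frac{1}{2})})$, apply $\hat{\eta}_{(2)}$ to obtain the even lattice with $\Pi_2\sim\left\langle\begin{pmatrix}2&1\\1&2\end{pmatrix},12\right\rangle$, invoke Proposition~\ref{2-unimod} together with Lemmas~\ref{samespace} and~\ref{scaling}, and finish with Theorem~\ref{thm: 6.12 in HTV} applied to the maximal order of reduced discriminant $2$. The identification you flag as the main obstacle is immediate: $-2v_1-2v_2+2v_3=-2(v_1+v_3)-2(v_2+v_3)+3\cdot(2v_3)$ with $3\in\mathbb{Z}_2^{\times}$, so the $\hat{\eta}_{(2)}$-image coincides over $\mathbb{Z}_2$ with the sublattice $\Pi_2$ of Proposition~\ref{2-unimod}.
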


\begin{proof}
Pick a basis of $\Lambda_2$ realising the diagonal Gram matrix given in the table above. We view $\Lambda$ as an odd lattice of determinant $1$ in $(V,Q^{(\frac{1}{2})})$ with local lattice $\Lambda_2 \sim \langle 1,1,1\rangle$. Then the lattice $\Pi = V\cap \hat{\eta}_{(2)}(\widehat{\Lambda})$ is an even lattice of discriminant $4$ with local lattices: \[\Pi_q\sim \begin{cases}\langle 2^{-1},2^{-1},1\rangle \sim \langle 1,1,1\rangle & \text{ if } q \neq 2\\ \left\langle\begin{pmatrix}2 & 1\\ 1 & 2\end{pmatrix}, 12\right\rangle & \text{ if } q = 2\end{cases}\]

We have that $O(\widehat{\Lambda}) = O(\widehat{\Pi})$ (since $O(\Lambda_2) = O(\Pi_2)$ by Proposition \ref{2-unimod} and $\Lambda_q = \Pi_q$ when $q\neq 2$). Another application of Lemma \ref{samespace} and Lemma \ref{scaling} now implies that there is an equality of Hecke modules: \[\mathcal{S}(O(V),O(\widehat{\Lambda}),W) = \mathcal{S}(O(V),O(\widehat{\Pi}),W)\] for every representation $W$ of $O(V)$.

By the recipe outlined in Subsection \ref{Ternary Lattices and Quaternion orders} we find that the even Clifford algebra of $(V,Q^{(\frac{1}{2})})$ is a definite quaternion algebra $B = \text{Cl}^0(V)$ of discriminant $2$. The order $\mathcal{O} = \text{Cl}^0(\Pi)$ has discriminant $\text{discrd}(\mathcal{O}) = 2$ and so is maximal, hence residually unramified everywhere with $\varepsilon_2(\mathcal{O}) = -1$. The result then follows from the results of Hein-Tornaria-Voight (see Theorem \ref{thm: 6.12 in HTV}).
\end{proof}

\subsection{Genera C and D}
Genera C and D are odd genera with local lattices $\langle -1,-5,40\rangle$ and $\langle 1,1,8\rangle$ at $2$. By Proposition \ref{2-mod}(ii), we see that these genera share the same adelic orthogonal group with the global lattice $\Pi$ having Gram matrix $\langle 1,1,2\rangle$ and discriminant $2$. Hence by Lemma \ref{samespace} both genera give rise to Hecke modules of orthogonal modular forms equal to those coming from the lattice $\Pi$.

Also by Proposition \ref{2-mod}(ii) we find that the even lattice $\Sigma$ of discriminant $8$ and Gram matrix $\langle 2,2,2\rangle$ is an index $2$ sublattice of $\Pi$ satisfying $O(\widehat{\Pi}) \subseteq O(\widehat{\Sigma})$ (note that $\Sigma$ represents Genus A). This means that there is a Hecke-equivariant injection: \[\mathcal{S}(O(V),O(\widehat{\Sigma}),W) \longhookrightarrow \mathcal{S}(O(V),O(\widehat{\Lambda}),W)\] for every representation $W$ of $O(V)$. When $W = W_{2k}\otimes\theta_d$ we may use Theorem \ref{genusA} to deduce that $S_{2k}^{\text{new}}\left(\Gamma_0(2),\varepsilon_2^{(2)}\varepsilon_d^{(2)}\right)$ embeds as a Hecke module into the RHS as a space of ``oldforms".

We may view $\Pi$ as an even lattice of discriminant $16$ in the quadratic space $(V,Q^{(2)})$. The even Clifford algebra of this space is a definite quaternion algebra $\text{Cl}^0(V)$ of discriminant $2$. The order $\mathcal{O} = \text{Cl}^0(\Pi)$ has discriminant $\text{discrd}(\mathcal{O}) = 8$ and is residually ramified (i.e.\ $\varepsilon_2(\mathcal{O}) = 0$). Hence the results of Hein-Tornaria-Voight cannot be applied here.

At present we are only able to make the following conjecture, based on computational evidence and supported by the deductions above.

\begin{conj}\label{genusCD}
Let $\Lambda$ be a lattice representing Genus \textup{C} or \textup{D}. Then for each $k\geq 0$ and $d\in\{1,2\}$ there exists a Hecke-equivariant isomorphism (for the algebra generated by the Hecke operators $T_q$ with $q\neq 2$ prime): \[\mathcal{S}(O(V),O(\widehat{\Lambda}),W_{2k}\otimes \theta_d) \cong S_{2k+2}^{2\textup{-new}}\left(\Gamma_0(4),\varepsilon^{(2)}_2\varepsilon^{(2)}_d\right).\]
\end{conj}

\begin{example}\label{Example genus C and D disc 8}

Consider the odd lattice $\Lambda$ with Gram matrix \[ T = \begin{pmatrix}
    1 & 0 & 0 \\
    0 & 1 & 0 \\
    0 & 0 & 8
\end{pmatrix}\] representing Genus D. We have that $\text{Cl}(\Lambda) = \{[\Lambda]\}$ and so there is no modular data to be found at trivial weight (which fits with $S_2^{2\text{-new}}(\Gamma_0(4)) = \{0\}$). However, at weight $W_8$ we find that $\text{dim}(\mathcal{S}(O(V),O(\widehat{\Lambda}),W_8)) = 2$, and the characteristic polynomials of $T_p$ (for $p \neq 2$) have linear factors $F_{p,1}(x)$ and $F_{p,2}(x)$ as follows:

\vspace{0.1in}
\begin{tabular}{l l l l l}
			 $F_{3,1}(x) = x + 156$&\quad $F_{5,1}(x) = x-870$&\quad $F_{7,1}(x) = x+952$ &\quad $\dots$ \\
		$F_{3,2}(x) = x - 228$&\quad  $F_{5,2}(x) = x + 666$&\quad $F_{7,2}(x) = x + 6328$&\quad $\dots$
		\end{tabular}
\vspace{0.1in}

\noindent These match with the newforms $f \in S_{10}^{\text{new}}(\Gamma_0(2))$ and $g \in S_{10}^{\text{new}}(\Gamma_0(4))$ having LMFDB labels \href{https://www.lmfdb.org/ModularForm/GL2/Q/holomorphic/2/10/a/a/}{2.10.a.a} and \href{https://www.lmfdb.org/ModularForm/GL2/Q/holomorphic/4/10/a/a/}{4.10.a.a} and Fourier expansions: 

\begin{align*}
    f(q) &=q + 16q^2 - 156q^3 + 256q^4 + 870q^5 - 2496q^6 - 952q^7 + \dots  \\
    g(q) &= q + 228 q^3 - 666 q^5 - 6328 q^7 + 32301 q^9 - 30420 q^{11}  \dots
\end{align*} The newforms above are precisely those of level $4$ that are $2$-new and have Atkin-Lehner sign $-1$, as expected from Conjecture \ref{genusCD}.
\end{example}

\subsection{Genus B and E}
Genera B and E are odd genera with local lattices $\langle 1,2,4\rangle$ and $\langle 1,-5,-40\rangle$ at $2$. By Proposition \ref{2-misc}(ii) we see that these genera share the same adelic orthogonal group, hence by Lemma \ref{samespace} the two genera give rise to the same Hecke modules of orthogonal modular forms. The proof of this lemma also reveals that these lattices share the same adelic orthogonal group as the common sublattice $\Pi$ of index $2$ with Gram matrix \[T = \begin{pmatrix}3 & 0 & 4\\ 0 & 4 & 0\\ 4 & 0 & 8\end{pmatrix}.\] It can be checked that $\Pi$ has a superlattice $\Sigma$ of index $2$ with Gram matrix $\langle 2,2,2\rangle$, and that $O(\widehat{\Pi})\subseteq O(\widehat{\Sigma})$ (note that $\Sigma$ represents Genus A). This means that there is a Hecke-equivariant injection: \[\mathcal{S}(O(V),O(\widehat{\Sigma}),W) \longhookrightarrow \mathcal{S}(O(V),O(\widehat{\Lambda}),W)\] for every representation $W$ of $O(V)$. When $W = W_{2k}\otimes\theta_d$ we may use Theorem \ref{genusA} to deduce that $S_{2k+2}^{\text{new}}\left(\Gamma_0(2),\varepsilon_2^{(2)}\varepsilon_d^{(2)}\right)$ embeds as a Hecke module into the RHS as a space of ``oldforms".

We can view either Genus B or Genus E as being represented by an even lattice of discriminant $64$ in $(V,Q^{(2)})$. The even Clifford algebra of this space is a definite quaternion algebra $\text{Cl}^0(V)$ of discriminant $2$. The order $\mathcal{O} = \text{Cl}^0(\Lambda)$ has discriminant $\text{discrd}(\mathcal{O})=32$ and is residually ramified (i.e.\ $\varepsilon_2(\mathcal{O}) = 0$). Hence the results of Hein-Tornaria-Voight cannot be applied here.

At present we are only able to make the following conjecture, based on computational evidence and supported by the deductions above.

\begin{conj}\label{conjBE} 
    Let $\Lambda$ be a lattice representing Genus \textup{B} or \textup{E}. Then for each $k\geq 0$ and $d\in\{1,2\}$ there exists a Hecke-equivariant isomorphism (for the algebra generated by the Hecke operators $T_q$ with $q\neq 2$ prime): \[\mathcal{S}(O(V),O(\widehat{\Lambda}),W_{2k}\otimes \theta_d) \cong S_{2k+2}^{\textup{new}}\left(\Gamma_0(2),\varepsilon_2^{(2)}\varepsilon_d^{(2)}\right) \oplus S_{2k+2}^{4\textup{-new}}\left(\Gamma_0(8),\varepsilon^{(2)}_2\varepsilon^{(2)}_d\right).\]
\end{conj}

\begin{example}
Consider the odd lattice with Gram matrix \[ T = \begin{pmatrix}
    1 & 0 & 0 \\
    0 & 2 & 0 \\
    0 & 0 & 4
\end{pmatrix}\] representing Genus B. Again,  $\text{Cl}(\Lambda) = \{[\Lambda]\}$ and so no modular data is found at trivial weight (which agrees with the fact that $S_2^{2\text{-new}}(\Gamma_0(8)) = \{0\}$). At weight $W_8$ we find that $\text{dim}(\mathcal{S}(O(V),O(\widehat{\Lambda}),W_8)) = 3$. The characteristic polynomials of the Hecke operators $T_p$ (with $p\neq 2$) decompose into linear factors $F_{p,1}(x)$, $F_{p,2}(x)$ and $F_{p,3}(x)$ where the first two are the forms $f$ and $g$ with LMFDB labels \href{https://www.lmfdb.org/ModularForm/GL2/Q/holomorphic/2/10/a/a/}{2.10.a.a} and \href{https://www.lmfdb.org/ModularForm/GL2/Q/holomorphic/4/10/a/a/}{4.10.a.a} from Example \ref{Example genus C and D disc 8}. The third linear factor is given by:

\vspace{0.1in}
\begin{tabular}{l l l l l}
			$F_{3,3}(x) = x - 68$& $F_{5,3}(x) = x-1510$& $F_{7,3}(x) = x-10248$ & $\dots$ 
		\end{tabular}
\vspace{0.1in}

\noindent which is identified as the newform $h\in S_{10}^{\text{new}}(\Gamma_0(8))$ with LMFDB label
        \href{https://www.lmfdb.org/ModularForm/GL2/Q/holomorphic/8/10/a/b/}{8.10.a.b} and Fourier expansion:
$$f(q) = q + 68q^3 + 1510q^5 + 10248q^7 - 15059q^9 - 3916q^{11} + 176594q^{13} +\dots $$

All three forms observed are precisely those predicted by Conjecture \ref{conjBE} (in particular all have Atkin-Lehner eigenvalue $-1$). The other forms with Atkin-Lehner eigenvalue $+1$ are not seen in this calculation (but would be seen if we twist the weight by $\theta_2$).
\end{example}

\section{Genera of half-discriminant $4p^2$}\label{8p2}

Let $(V,Q)$ be a positive definite ternary quadratic space over $\mathbb{Q}$, and $\Lambda \subset V$ be an integral lattice of discriminant $8p^2$ with $p$ an odd prime.  We are interested in the spaces $\mathcal{S}(O(V),O(\widehat{\Lambda}),W_{2k}\otimes\theta_d)$ of orthogonal modular forms for various weights and the associated Hecke action coming from the $p$-neighbour operators $T_q$ (for $q\neq 2,p$). To get us started we once again classify the local lattices.

\vspace{0.1in}
\noindent\underline{\textbf{q $\neq$ 2,p}}

\vspace{0.1in}
\noindent Note that $\Lambda_q$ is unimodular if $q\neq 2,p$ and so \[\Lambda_q \sim \langle 1,1,\text{disc}(\Lambda_q)\rangle \sim \langle 1,1,8p^2\rangle\sim\langle 1,1,2\rangle.\] 

\noindent\underline{\textbf{q = 2}}

\vspace{0.1in}
\noindent Here the discriminant is $\text{disc}(\Lambda_2) = [8p^2] = [8]\in\mathbb{Z}_2/(\mathbb{Z}_2^{\times})^2$ and a simple calculation gives the following possibilities (with their Conway-Sloane symbols, Types and Hasse symbols):

\vspace{0.1in}
\begin{center}
\begin{tabular}{c|c|c|c}
$\Lambda_2$ & Symbol & Type & $S(V_2)$\\ \hline
$\left\langle\begin{pmatrix}0 & 1\\ 1 & 0\end{pmatrix},-8\right\rangle$ & $1^{+2}_{\text{II}}8^{+1}_{-1}$ & II & -1  \\
$\left\langle\begin{pmatrix}2 & 1\\ 1 & 2\end{pmatrix},-40\right\rangle$ & $1^{-2}_{\text{II}}8^{-1}_{-5}$ & II & -1 \\
$\langle 2,-2,-2\rangle$ & $2_{-1}^{+3}$ &II & -1  \\ 
$\langle 1,-2,-4\rangle$ & $[1^{+1}2^{+1}4^{+1}]_7$ &I&-1 \\ 
$\langle -1, -1, 8\rangle$ & $1^{+2}_{6}8^{+1}_{1} $ & I & -1  \\
$\langle 1,5,40\rangle$ & $1^{-2}_{6}8^{-1}_{5}$ & I & -1  \\
$\langle 1,-1,-8\rangle$ & $1^{+2}_{0}8^{+1}_{-1} $ & I & -1\\
$\langle 2,2,2\rangle $ & $ 2^{+3}_{3}$ & II & +1 \\
$\langle 1,2,4\rangle $ & $[1^{+1}2^{+1}4^{+1}]_3$ &I & +1 \\
$\langle -1,-5,40\rangle $ & $1^{-2}_{2}8^{-1}_5$ &I & +1 \\
$\langle 1,1,8\rangle $ & $ 1^{+2}_2 8^{+1}_1$ &I & +1\\
$\langle 1,-5,-40\rangle $ & $1^{-2}_48^{-1}_{-5}$ &I & +1\\
\end{tabular} \\
\end{center}

\newpage
\noindent\underline{\textbf{q = p}}

\vspace{0.1in}
\noindent Here the discriminant is: \[\text{disc}(\Lambda_p) = [8p^2] = [2p^2]= \begin{cases} [p^2] & \text{ if } p\equiv 1,7\bmod 8\\ [rp^2] & \text{ if } p\equiv 3,5\bmod 8\end{cases}.\] A similar calculation gives (with $\varepsilon = (-1)^{\frac{p-1}{2}}$):

\vspace{0.1in}
\begin{center}
\begin{tabular}{c|c|c|c}
$\Lambda_p$ & Symbol & $\text{disc}(\Lambda_p)$ & $S(V_p)$\\

\hline
$\langle 1,p,p\rangle$ & $1^{+1}p^{+2}$ & $[p^2]$ & $\varepsilon$\\
$\langle r,p,pr\rangle$ & $1^{-1}p^{-2}$ & $[p^2]$ & $-\varepsilon$\\
$\langle 1,1,p^2\rangle$ & $1^{+2}(p^2)^{+1}$ & $[p^2]$ & $1$\\
$\langle 1,r,p^2r\rangle$ & $1^{-2}(p^2)^{-1}$ & $[p^2]$ & $1$\\
\hline
$\langle r,p,p\rangle$ & $1^{-1}p^{+2}$ & $[rp^2]$ & $\varepsilon$\\
$\langle 1,p,pr\rangle$ & $1^{+1}p^{-2}$ & $[rp^2]$ & $-\varepsilon$\\
$\langle 1,r,p^2\rangle$ & $1^{-2}(p^2)^{+1}$ & $[rp^2]$ & $1$\\
$\langle 1,1,p^2r\rangle$ & $1^{+2}(p^2)^{-1}$ & $[rp^2]$ & $1$
\end{tabular}
\end{center}

\vspace{0.1in}
Now we can classify the genera of such lattices. Since $(V,Q)$ is positive definite we have $S(V_{\infty}) = 1$. We also have $S(V_q) = 1$ for any $q\neq 2,p$, and so each possible genus for $\Lambda$ must give local lattices satisfying $S(V_2)S(V_p) = 1$ in order to satisfy the compatibility relations for Hasse symbols. Hence we must have $S(V_2) = S(V_p)$ and it is now clear that there are twenty two possible genera of interest, with Gram matrices locally at $2$ and $p$ given below (depending on the class $p\bmod 8$):

\vspace{0.2in}
\begin{tabular}{c|c|c|c|c|c}
\multirow{2}{*}{Label} & \multirow{2}{*}{$\Lambda_2$} & \multicolumn{4}{c}{$\Lambda_p$}\\ 
\cline{3-6} & & $p\equiv 1 \bmod 8$ & $p\equiv 3 \bmod 8$ &  $p\equiv 5 \bmod 8$ & $p\equiv 7 \bmod 8$\\ \hline $1$A & $\left\langle\begin{pmatrix}0 & 1\\ 1 & 0\end{pmatrix},-8\right\rangle$ & \multirow{7}{*}{$\langle r,p,pr\rangle$} & \multirow{7}{*}{$\langle 2,p,p\rangle$} & \multirow{7}{*}{$\langle 1,p,2p\rangle$} & \multirow{7}{*}{$\langle 1,p,p\rangle$}\\ 
$1$B & $\left\langle\begin{pmatrix}2 & 1\\ 1 & 2\end{pmatrix},-40\right\rangle$ &  &  &  & \\ 
\cline{1-2} $2$A & $\langle 2,-2,-2\rangle$ &  &  &  & \\ 
$2$B & $\langle 1,-2,-4\rangle$ & & & & \\ 
$2$C & $\langle-1,-1,8\rangle$ & & & & \\ 
$2$D & $\langle 1,5,40 \rangle$ & & & & \\ 
$2$E & $\langle 1,-1,-8 \rangle$ & & & & \\ 
\hline $3$A & $\langle 2,2,2\rangle$ & \multirow{5}{*}{$\langle 1,p,p\rangle$} & \multirow{5}{*}{$\langle 1,p,2p\rangle$} & \multirow{5}{*}{$\langle 2,p,p\rangle$} & \multirow{5}{*}{$\langle -1,p,-p\rangle$}\\ 
$3$B & $\langle 1,2,4\rangle$ & & & & \\ 
$3$C & $\langle-1,-5,40\rangle$ & & & & \\ 
$3$D & $\langle 1,1,8 \rangle$ & & & & \\ 
$3$E & $\langle 1,-5,-40 \rangle$ & & & & \\ 
\hline $4$A & $\langle 2,2,2\rangle$ & \multirow{5}{*}{$\langle 1,1,p^2\rangle$} & \multirow{5}{*}{$\langle 1,2,p^2\rangle$} & \multirow{5}{*}{$\langle 1,1,2p^2\rangle$} & \multirow{5}{*}{$\langle 1,-1,-p^2\rangle$}\\ 
$4$B & $\langle 1,2,4\rangle$ & & & & \\ 
$4$C & $\langle-1,-5,40\rangle$ & & & & \\ 
$4$D & $\langle 1,1,8 \rangle$ & & & & \\ 
$4$E & $\langle 1,-5,-40 \rangle$ & & & & \\ 
\hline $5$A & $\langle 2,2,2\rangle$ & \multirow{5}{*}{$\langle 1,r,p^2r\rangle$} & \multirow{5}{*}{$\langle 1,1,2p^2\rangle$} & \multirow{5}{*}{$\langle 1,2,p^2\rangle$} & \multirow{5}{*}{$\langle 1,1,p^2\rangle$}\\ 
$5$B & $\langle 1,2,4\rangle$ & & & & \\ 
$5$C & $\langle-1,-5,40\rangle$ & & & & \\ 
$5$D & $\langle 1,1,8 \rangle$ & & & & \\ 
$5$E & $\langle 1,-5,-40 \rangle$ & & & & \\ 
\end{tabular} 

\vspace{0.2in} \noindent The labels have been chosen to most suit comparison with the genera of discriminant $2p^2$ and $8$ previously considered.

Again, for a representative lattice $\Lambda\subset V$ coming from each of the genera above, we will either prove or conjecture that there is a Hecke-equivariant isomorphism between $\mathcal{S}(O(V), O(\widehat{\Lambda}),W_{2k}\otimes\theta_d)$ and a space of classical modular forms. 

\subsection{Genera $1$B, $2$A, type C and type D}

All of these genera can be studied directly by realising representative lattices as index $2$ sublattices of genera of discriminant $2p^2$.

\begin{thm}\label{genus1b2acd}
Let $\Lambda$ be a lattice representing Genus $1$\textup{B}, Genus $2$\textup{A} or any of the genera of type \textup{C} or \textup{D}, and let $N$ denote the numeric part of the genus label. Then for each representation $W$ of $V$ there is an equality of Hecke modules: \[\mathcal{S}(O(V),O(\widehat{\Lambda}),W) = \mathcal{S}(O(V),O(\widehat{\Pi}),W),\] where $\Pi$ is a lattice of discriminant $2p^2$ representing Genus $N$.

In particular, for each $k\geq 0$ and $d\in\{1,2,p,2p\}$ we have the following Hecke-equivariant isomorphisms (for the algebra generated by the Hecke operators $T_q$ with $q\neq 2,p$ prime): \[\mathcal{S}(O(V),O(\widehat{\Lambda}),W_{2k}\otimes \theta_d)\cong \begin{cases}S^{\textup{new}}_{2k+2}\left(\Gamma_0(p),\varepsilon^{(p)}_p\varepsilon^{(p)}_d\right) & \text{ if } \Lambda\sim 1\textup{B}\\ S^{p\textup{-new}}_{2k+2}\left(\Gamma_0(2p),\varepsilon^{(2p)}_p\varepsilon^{(2p)}_d\right) & \text{ if } \Lambda\sim 2\textup{A},2\textup{C},2\textup{D}\\ S_{2k+2}^{2\textup{-new}}\left(\Gamma_0(4p),\varepsilon^{(4p)}_2\varepsilon^{(4p)}_d\right) & \text{ if } \Lambda\sim 3\textup{C},3\textup{D}\\ S^{2\textup{-new}}_{2k+2}\left(\Gamma_0(4p^2),\varepsilon_2^{(4p^2)}\varepsilon_d^{(4p^2)}\right) & \text{ if } \Lambda\sim 4\textup{C},4\textup{D}\\ S^{2\textup{-new}}_{2k+2}\left(\Gamma_0(4),\varepsilon^{(4)}_2\varepsilon^{(4)}_d\right)\oplus S^{2p^2\textup{-new}}_{2k+2}\left(\Gamma_0(4p^2),\varepsilon^{(4p^2)}_2\varepsilon^{(4p^2)}_d\right) & \text{ if } \Lambda\sim 5\textup{C},5\textup{D}\end{cases}\] where we have assumed Conjectures \ref{genus3}, \ref{genus4} and \ref{genus5} for the last three isomorphisms.\end{thm}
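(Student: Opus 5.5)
The plan is to work one prime at a time. For each listed genus we build a superlattice $\Pi\supset\Lambda$ of index $2$ that is modified only at the prime $2$. By construction $\Pi_q=\Lambda_q$ for all $q\neq 2$, so $O(\Pi_q)=O(\Lambda_q)$ at those primes, and we need $O(\Pi_2)=O(\Lambda_2)$. Since only the $2$-adic component changes, $\operatorname{disc}(\Pi)=\operatorname{disc}(\Lambda)/4=2p^2$. Once $O(\widehat{\Lambda})=O(\widehat{\Pi})$, Lemma \ref{samespace} gives the equality of Hecke modules. The stated isomorphisms then follow from Theorem \ref{genus1}, Theorem \ref{genus2} and Conjectures \ref{genus3}, \ref{genus4}, \ref{genus5}, provided we identify which genus of discriminant $2p^2$ the lattice $\Pi$ represents.

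The local input at $2$ comes from Section \ref{Orthog}, read from the superlattice side.
\begin{itemize}
\item Type C and D: $\Lambda_2\sim\langle -1,-5,40\rangle$ or $\langle 1,1,8\rangle$. Proposition \ref{2-mod}(ii) shows these are the sublattices $\Pi_2$ or $\Omega_2$ of $\langle a,b,2c\rangle$. With $(a,b,c)=(1,1,1)$ and $a+b\equiv 2\bmod 4$, the sublattice $\langle a,b,8c\rangle=\langle1,1,8\rangle$ appears. The other sublattice $\langle a+2c,b+2c+4,8(c+2a+2b)\rangle=\langle 3,7,40\rangle$ should be $2$-adically equivalent to $\langle -1,-5,40\rangle$, which I will check via Conway--Sloane symbols. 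In both cases the proposition gives equal orthogonal groups with the superlattice $\Pi_2\sim\langle 1,1,2\rangle$.
\item Genus $2$A: $\Lambda_2\sim\langle 2,-2,-2\rangle$. I expect the superlattice $\langle 1,-1,-2\rangle$, via Proposition \ref{2-mod}(ii) with $(a,b,c)=(1,-1,-1)$. Here $a+b\equiv 0\bmod 4$ and $ab\equiv 7\bmod 8$, so $\Sigma_2\sim\langle 2,-2,2c\rangle=\langle 2,-2,-2\rangle$, and the first bullet condition gives equality.
\item Genus $1$B: $\Lambda_2\sim\left\langle\left(\begin{smallmatrix}2&1\\1&2\end{smallmatrix}\right),-40\right\rangle\sim\left\langle\left(\begin{smallmatrix}2&1\\1&2\end{smallmatrix}\right),-8\cdot 5\right\rangle$. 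Proposition \ref{2-mod}(i) with $a=5$ gives the superlattice $\left\langle\left(\begin{smallmatrix}2&1\\1&2\end{smallmatrix}\right),-10\right\rangle$. I will check this is $2$-adically equivalent to $\left\langle\left(\begin{smallmatrix}0&1\\1&0\end{smallmatrix}\right),-2\right\rangle$; I expect this because the discriminants agree and the Type II symbols should match after sign walking.
\end{itemize}

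Globally, take $\widehat{\Pi}$ with $\Pi_q=\Lambda_q$ for $q\ne2$ and the chosen $\Pi_2\supset\Lambda_2$, and set $\Pi=V\cap\widehat{\Pi}$. This is a lattice in the same space $V$, and $\Pi$ is integral because each $\Pi_q$ is integral. The genus label then follows by matching local data with the table for half-discriminant $p^2$:
\begin{itemize}
\item $\Pi_p=\Lambda_p$ retains the numeric label $N$.
\item $\Pi_2$ is the $2$-adic lattice for Genus $N$ in that table: $\langle 1,1,2\rangle$ for $N=3,4,5$, $\langle1,-1,-2\rangle$ for $N=2$, and the Type II lattice for $N=1$.
\end{itemize}
The Hasse-invariant compatibility is automatic, since $V$ is unchanged.

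The main obstacle is the $2$-adic bookkeeping. We must confirm that each representative of $2$C and $2$D, which have $S(V_2)=-1$, fits Proposition \ref{2-mod}(ii) with a suitable $(a,b,c)$. For example, $\langle-1,-1,8\rangle$ should come from $(a,b,c)=(-1,-1,1)$, and $\langle 1,5,40\rangle$ from $\langle 1,-1,-2\rangle$ as an $\Omega_2$-type lattice. We must also confirm the Type II identification for $1$B. I would do these via Conway--Sloane symbol calculations using oddity fusion and sign walking. If some case falls outside the hypotheses of the propositions, the fallback is to repeat their sub/superlattice enumeration directly, using Allcock maximality to reduce to one inclusion.
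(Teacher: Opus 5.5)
Your proposal is correct and follows the same route as the paper's proof. Proposition \ref{2-mod}(i) with $a=5$ handles Genus $1$B, Proposition \ref{2-mod}(ii) handles $2$A and the types C and D, and Lemma \ref{samespace} together with the results of Section \ref{2p2} finishes the argument; the parameter choices and the identification $\left\langle\left(\begin{smallmatrix}2&1\\1&2\end{smallmatrix}\right),-10\right\rangle\sim\left\langle\left(\begin{smallmatrix}0&1\\1&0\end{smallmatrix}\right),-2\right\rangle$ (one sign walk, $1^{-2}_{\text{II}}2^{-1}_3\to 1^{+2}_{\text{II}}2^{+1}_7$) that you flag as checks all go through, and they are exactly the bookkeeping the paper leaves implicit.
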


\begin{proof}
Proposition \ref{2-mod}(i) shows that if $\Lambda$ represents Genus $1$B and $\Pi$ represents Genus $1$ then $O(\Lambda_2) = O(\Pi_2)$. Since $O(\Lambda_p) = O(\Pi_p)$ for all $p\neq 2$ we then have $O(\widehat{\Lambda}) = O(\widehat{\Pi})$.

Proposition \ref{2-mod}(ii) shows that if $\Lambda$ represents any of the other genera in consideration and $\Pi$ represents Genus $N$ then $O(\Lambda_2) = O(\Pi_2)$. Since $O(\Lambda_p) = O(\Pi_p)$ for all $p\neq 2$ we then have $O(\widehat{\Lambda}) = O(\widehat{\Pi})$.

Having established equality of adelic orthogonal groups in all cases, Lemma \ref{samespace} gives an equality of Hecke modules: \[\mathcal{S}(O(V),O(\widehat{\Lambda}),W) = \mathcal{S}(O(V),O(\widehat{\Pi}),W)\] for any representation $W$ of $O(V)$. The second claim now follows from the explicit results proved and conjectured in Section $4$.
\end{proof}

\subsection{Remaining genera of type A}

The remaining genera cannot be reduced to genera of discriminant $2p^2$. While these lattices can also be constructed as index $2$ sublattices of lattices $\Pi$ of discriminant $2p^2$, we simply do not get an equality of adelic orthogonal groups. Indeed, extensive computation reveals that the corresponding spaces of orthogonal modular forms are not isomorphic in general.

For genera $3$A, $4$A and $5$A, Proposition \ref{2-mod}(ii) only provides us with an inclusion $O(\widehat{\Pi})\subseteq O(\widehat{\Lambda})$ (as the local $2$-adic lattice $\langle 1,1,2\rangle$ fails the final condition). This provides us with a Hecke-equivariant injection: \[\mathcal{S}(O(V),O(\widehat{\Lambda}),W)\longhookrightarrow \mathcal{S}(O(V),O(\widehat{\Pi}),W),\] which establishes the fact that all such eigenforms are ``old" (and can be translated to classical modular forms by the theorems in Section \ref{2p2}). However, it does not tell us precisely which subspace of the RHS we should expect to see.

Something similar happens for Genus $1$A. The local $2$-adic lattice for this genus is $\left\langle\begin{pmatrix}0 & 1\\ 1 & 0\end{pmatrix},-8\right\rangle$. Applying the map $\hat{\mu}_{(2)}\in\text{GL}(\widehat{V})$ with local components: \[\mu_{(2),q} = \begin{cases} \text{diag}\left(1,1,\frac{1}{2}\right) & \text{ if } q = 2\\ \text{ id } & \text{ if } q\neq 2\end{cases},\] with respect to any basis realising this Gram matrix, gives the superlattice $\Pi = V\cap\hat{\mu}_{(2)}(\widehat{\Lambda})$ of index $2$ has discriminant $2p^2$ (which represents Genus $1$). An argument similar to those in Section \ref{Orthog} shows that $O(\widehat{\Lambda})\subseteq O(\widehat{\Pi})$, giving rise to a Hecke-equivariant inclusion: \begin{align*}\mathcal{S}(O(V),O(\widehat{\Pi}),W) &\longhookrightarrow \mathcal{S}(O(V),O(\widehat{\Lambda}),W)\end{align*} for any representation $W$ of $O(V)$. In this case we also find that the RHS has ``oldforms" (which again can be translated to classical modular forms by the theorems in Section \ref{2p2}). However, in general the space on the RHS is larger and we cannot learn what the full space of orthogonal modular forms should be from this calculation alone.

By instead shifting focus to lattices of discriminant $4p^2$ we can actually settle three of the four genera completely.

\begin{thm}\label{3a4a5a}
Let $\Lambda$ be a lattice representing Genus $3$\textup{A}, Genus $4$\textup{A} or Genus $5$\textup{A}. Then for each representation $W$ of $V$ there is an equality of Hecke modules: \[\mathcal{S}(O(V),O(\widehat{\Lambda}),W) = \mathcal{S}(O(V),O(\widehat{\Pi}),W),\] where $\Pi$ is a suitable even lattice of discriminant $4p^2$ in $(V,Q^{\left(\frac{1}{2}\right)})$.

In particular, for each $k\geq 0$ and $d\in\{1,2,p,2p\}$ we have the following Hecke-equivariant isomorphisms (for the algebra generated by the Hecke operators $T_q$ with $q\neq 2,p$ prime): \[\mathcal{S}(O(V),O(\widehat{\Lambda}),W_{2k}\otimes \theta_d)\cong \begin{cases}S^{2\textup{-new}}_{2k+2}\left(\Gamma_0(2p),\varepsilon^{(2p)}_2\varepsilon^{(2p)}_d\right) & \text{ if } \Lambda\sim 3\textup{A}\\ S^{2\textup{-new}}_{2k+2}\left(\Gamma_0(2p^2),\varepsilon^{(2p^2)}_2\varepsilon^{(2p^2)}_d\right) & \text{ if } \Lambda\sim 4\textup{A}\\ S^{\textup{new}}_{2k+2}\left(\Gamma_0(2),\varepsilon^{(2)}_2\varepsilon^{(2)}_d\right)\oplus S^{\textup{new}}_{2k+2}\left(\Gamma_0(2p^2),\varepsilon^{(2p^2)}_2\varepsilon^{(2p^2)}_d\right) & \text{ if } \Lambda\sim 5\textup{A}\end{cases}\]
\end{thm}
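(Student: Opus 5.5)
The plan is to copy the proof of Theorem \ref{genusA}, now working one prime at a time. A lattice $\Lambda$ in Genus 3A, 4A or 5A has $\Lambda_2\sim\langle 2,2,2\rangle$ and is even. We view it in $(V,Q^{(\frac12)})$, where it becomes an odd lattice of discriminant $p^2$ with $\Lambda_2\sim\langle 1,1,1\rangle$. We leave $\Lambda_p$ and all $\Lambda_q$ with $q\neq 2,p$ untouched and apply $\hat\eta_{(2)}$ with respect to a basis realising $\langle1,1,1\rangle$. This produces $\Pi=V\cap\hat\eta_{(2)}(\widehat\Lambda)$ with
\[\Pi_2\sim\left\langle\begin{pmatrix}2&1\\1&2\end{pmatrix},12\right\rangle\]
(Proposition \ref{2-unimod} with $a=b=c=1$, $\Delta=3$), and $\Pi_q=\Lambda_q$ for $q\neq 2$. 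Hence $\Pi$ is even of discriminant $4p^2$ in $(V,Q^{(\frac12)})$.

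Proposition \ref{2-unimod} gives $O(\Lambda_2)=O(\Pi_2)$, and equality holds trivially at the other primes. So $O(\widehat\Lambda)=O(\widehat\Pi)$. Lemmas \ref{samespace} and \ref{scaling} then give the stated equality of Hecke modules for every $W$. This settles the first claim. The only check needed is that the scaling by $\frac12$ does not alter the shape of $\Lambda_p$ beyond multiplying by a unit at $p$. This is harmless because $2$ is a unit at $p$, though it may swap $1$ and $r$ in the Jordan components.

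For the explicit isomorphisms we pass to $\mathcal O=\mathrm{Cl}^0(\Pi)$, of reduced discriminant $2p^2$. The space $(V,Q^{(\frac12)})$ has Hasse invariant $+1$ at $p$ for genera 4A and 5A, so $B$ has discriminant $2$ there. For 3A the Hasse invariant at $p$ is $\varepsilon$ in the scaled sense, and a short computation should give $D=2$ as well. At $2$ the local order has discriminant $2$, so it is maximal in the division algebra $B_2$ with $\varepsilon_2=-1$. This matches the $\varepsilon^{(\cdot)}_2$ sign vector appearing in all three cases.

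At $p$ we use the classification of local orders of Section 24.6 of \cite{voight2021quaternion}. For 4A, $\Lambda_p\sim\langle1,1,p^2\rangle$ up to units, which gives an Eichler order of level $p^2$ with $\varepsilon_p=1$. For 5A, $\Lambda_p\sim\langle1,r,p^2r\rangle$, which gives the residually inert order with $\varepsilon_p=-1$ and $v_p=2$. For 3A, $\Lambda_p\sim\langle u,p,pu'\rangle$, and here I expect the Clifford order to be $\mathbb Z_p+p\mathcal O'_p$-type or an Eichler order of level $p$ inside a split algebra. This needs care, since only $v_p(\mathrm{discrd})=2$ is known a priori.

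With residual unramifiedness in hand, Theorem \ref{thm: 6.12 in HTV} gives the answers. For 4A, $N^+=p^2$ and $N^-=2$, so the only term is $M=2$, giving $S^{2\text{-new}}_{2k+2}(\Gamma_0(2p^2))$. For 5A, $N^-=2p^2$ and the admissible $M$ are $2$ and $2p^2$, giving exactly the stated direct sum. For 3A we want $N^+=p$, $N^-=2$.

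\textbf{Main obstacle.} The hard step is genus 3A. Its $p$-part $\langle u,p,pu'\rangle$ is $p$-modular of rank 2, and the naive order has $p^2$ in its discriminant while the claimed level is $2p$. Here I would first apply $\hat\psi_{(p)}$, using Proposition \ref{p-mod}, exactly as in Theorem \ref{genus1}, before or after the $2$-adic move. This reduces to an even lattice of discriminant $4p$ after scaling by $\frac1p$, whose order is Eichler of level $p$ in $B$ of discriminant $2$. Theorem \ref{thm: 6.12 in HTV} then yields $S^{2\text{-new}}_{2k+2}(\Gamma_0(2p))$. So in the 3A case the lattice $\Pi$ used for the classical identification is really this reduced one, which is still compatible with the statement via Lemma \ref{samespace}. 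Throughout, the remaining bookkeeping risk is tracking the sign vectors $\varepsilon_d$ through the two scalings.
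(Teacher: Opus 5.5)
Your proposal is correct and follows the paper's proof essentially step for step: you view $\Lambda$ in $(V,Q^{(\frac12)})$ and apply $\hat\eta_{(2)}$ with Proposition \ref{2-unimod} to get the even lattice $\Pi$ of discriminant $4p^2$. For 3A you then apply $\hat\psi_{(p)}$ with Proposition \ref{p-mod} (the paper does this after the $2$-adic move) to reach an Eichler order of reduced discriminant $2p$ in the algebra of discriminant $2$, and for 4A and 5A you apply Theorem \ref{thm: 6.12 in HTV} directly to $\mathrm{Cl}^0(\Pi)$ with $\varepsilon_2=-1$ and $\varepsilon_p=\pm 1$, with your choices of $N^+$, $N^-$ and admissible $M$ matching the stated spaces in each case.
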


\begin{proof}
Lattices representing genera $3$A, $4$A and $5$A have local $2$-adic lattice isometric to $\langle 2,2,2\rangle$. Following suit from the proof of Theorem \ref{genusA}, we first view $\Lambda$ as an odd lattice of discriminant $p^2$ in $(V,Q^{\left(\frac{1}{2}\right)})$ with local Gram matrix $\Lambda_2\sim\langle 1,1,1\rangle$. We then apply the map $\hat{\eta}_{(2)}\in\text{GL}(\hat{V})$ to get an even lattice $\Pi = V\cap \hat{\eta}_{(2)}(\hat{\Lambda})$ of discriminant $4p^2$ with local $2$-adic lattice isometric to $\Pi_2\sim\left\langle\begin{pmatrix}2 & 1\\ 1 & 2\end{pmatrix},12\right\rangle$. Proposition \ref{2-unimod} tells us that $O(\Lambda_2) = O(\Pi_2)$, and since $O(\Lambda_p) = O(\Pi_p)$ for all primes $p\neq 2$ we must have that $O(\widehat{\Lambda}) = O(\widehat{\Pi})$. As usual, Lemma \ref{samespace} and Lemma \ref{scaling} give us an equality of Hecke modules: \[\mathcal{S}(O(V),O(\widehat{\Lambda}),W) = \mathcal{S}(O(V),O(\widehat{\Pi}),W)\] for every representation $W$ of $O(V)$.

In the case of Genus $3$A we have local lattice $\Pi_p\sim \langle a,p,pb\rangle$ for some $a,b\in\{1,r\}$ and so may also apply the map $\hat{\psi}_{(p)}$ and scale the quadratic map by $\frac{1}{p}$ to receive an even lattice $\Omega = V\cap\hat{\psi}_{(p)}(\widehat{\Pi})$ of discriminant $4p$ in $(V,Q^{\left(\frac{1}{2p}\right)})$. This lattice satisfies $O(\widehat{\Pi}) = O(\widehat{\Omega})$ (by Proposition \ref{p-mod}). Lemma \ref{samespace} and Lemma \ref{scaling} once again give an equality of Hecke modules (for each representation $W$ of $O(V)$): \[\mathcal{S}(O(V),O(\widehat{\Pi}),W) = \mathcal{S}(O(V),O(\widehat{\Omega}),W).\] By the recipe outlined in Subsection \ref{Ternary Lattices and Quaternion orders} we find that the even Clifford algebra of $(V,Q^{(\frac{1}{2p})})$ is a definite quaternion algebra $B = \text{Cl}^0(V)$ of discriminant $2$. The order $\mathcal{O} = \text{Cl}^0(\Omega)$ has discriminant $\text{discrd}(\mathcal{O}) = 2p$ and so is Eichler, hence residually unramified everywhere with $\varepsilon_2(\mathcal{O}) = -1$ and $\varepsilon_p(\mathcal{O}) = 1$. The result then follows from the results of Hein-Tornaria-Voight (see Theorem \ref{thm: 6.12 in HTV}).

In the case of Genus $4$A and Genus $5$A we are unable to further reduce the discriminant. However, one again uses the recipe in Subsection \ref{Ternary Lattices and Quaternion orders} to see that the quaternion algebra $B = \text{Cl}^0(V)$ corresponding to $(V,Q^{\left(\frac{1}{2}\right)})$ has discriminant $\text{disc}(B) = 2$ and that the quaternion order $\mathcal{O} = \text{Cl}^0(\Pi)$ has $\text{discrd}(\mathcal{O}) = 2p^2$. This order is locally maximal at $2$ with $\varepsilon_2(\mathcal{O}) = -1$. We also find that $\mathcal{O}$ is locally Eichler at $p$ in the case of Genus $4$A (with $\varepsilon_p(\mathcal{O}) = 1$), whereas it is not locally Eichler at $p$ in the case of Genus $5$A (but has $\varepsilon_p(\mathcal{O}) = -1$). In either case, the order $\mathcal{O}$ is residually unramified everywhere and so another application of Hein-Tornaria-Voight then provides the results as claimed for these genera.
\end{proof}

We finish the subsection by dealing with Genus $1$A.

\begin{thm}\label{genus1a}
Let $\Lambda$ be a lattice representing Genus $1$\textup{A}. Then for each $k\geq 0$ and $d\in\{1,2,p,2p\}$ we have the following Hecke-equivariant isomorphisms (for the algebra generated by the Hecke operators $T_q$ with $q\neq 2,p$ prime): \[\mathcal{S}(O(\hat{\Lambda}),W_{2k}\otimes \theta_d)\cong S^{p\textup{-new}}_{2k+2}\left(\Gamma_0(4p),\varepsilon^{(4p)}_p\varepsilon^{(4p)}_d\right).\]
\end{thm}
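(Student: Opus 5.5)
Genus $1$A has $\Lambda_2\sim\left\langle\begin{pmatrix}0&1\\1&0\end{pmatrix},-8\right\rangle$, which is even of type II with $v_2(\mathrm{disc})=3$. At $p$ it has $\Lambda_p\sim\langle a,p,pb\rangle$, the same local lattice as Genus $1$. The plan is to run the Genus $1$ argument at $p$ unchanged and to perform one new reduction at $2$, arriving at an Eichler order of level $4p$ in the quaternion algebra of discriminant $p$.

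\textbf{Step 1 (reduction at $p$).} Choose a basis of $\Lambda_p$ realising the diagonal Gram matrix and apply $\hat{\psi}_{(p)}$. By Proposition \ref{p-mod}, the lattice $\Pi=V\cap\hat{\psi}_{(p)}(\widehat{\Lambda})$ satisfies $O(\widehat{\Pi})=O(\widehat{\Lambda})$. Viewed in $(V,Q^{(1/p)})$, $\Pi$ is an even lattice of discriminant $8p$. Its local lattice at $p$ is as in the proof of Theorem \ref{genus1}, and its local lattice at $2$ is
\[\Pi_2\sim\left\langle\begin{pmatrix}0&1\\1&0\end{pmatrix},-8p\right\rangle\sim\left\langle\begin{pmatrix}0&1\\1&0\end{pmatrix},-8u\right\rangle\]
for some $u\in\{\pm1,\pm5\}$. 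Lemmas \ref{samespace} and \ref{scaling} then identify the Hecke modules for $\Lambda$ and $\Pi$.

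\textbf{Step 2 (the prime $2$).} I would not try to shrink the discriminant at $2$. As the discussion before Theorem \ref{3a4a5a} shows, passing to the superlattice via $\hat{\mu}_{(2)}$ only gives an inclusion of orthogonal groups. Instead I would use $\Pi$ directly, noting that $\mathrm{Cl}^0(\Pi)=\mathcal{O}$ has $\mathrm{discrd}(\mathcal{O})=4p$ and sits in $B=\mathrm{Cl}^0(V)$, which has discriminant $p$ as in Theorem \ref{genus1}. The local order $\mathcal{O}_2$ is then an order of discriminant $4$ in $M_2(\mathbb{Q}_2)$. The key local claim is that $\mathcal{O}_2$ is the Eichler order of level $4$, so that $\varepsilon_2(\mathcal{O})=+1$.

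To check this claim, I would compute the even Clifford order of the lattice with Gram matrix $\left\langle\begin{pmatrix}0&1\\1&0\end{pmatrix},-8u\right\rangle$ using the explicit basis formulas of Subsection \ref{Ternary Lattices and Quaternion orders}, taking $a=b=0$, $u=1$, $v=w=0$, $c=-4u$. Those formulas involve division by $b$, so the basis has to be permuted first. Equivalently, and more cleanly, I would compare with the ternary lattice attached to the standard Eichler order $\begin{pmatrix}\mathbb{Z}_2&\mathbb{Z}_2\\4\mathbb{Z}_2&\mathbb{Z}_2\end{pmatrix}$. Its trace-zero dual lattice has Gram matrix equivalent to $\langle H,-2\cdot 4\rangle$ up to units, i.e.\ a hyperbolic plane plus $8$-modular rank one. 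By the bijection between ternary lattices and quaternion orders (functorial over $\mathbb{Z}_2$), this matches $\Pi_2$ up to the unit $u$, which can be absorbed by similarity. The same bijection also shows the residually ramified option cannot occur here: a hyperbolic plane in the Jordan decomposition forces $\mathcal{O}_2/\mathrm{rad}\cong\mathbb{F}_2\times\mathbb{F}_2$.

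\textbf{Step 3 (conclusion).} With $\mathcal{O}$ residually unramified everywhere, $N=4p$, $N^+=4$, $N^-=p$ and $D=p$, Theorem \ref{thm: 6.12 in HTV} gives the sum over $M\mid p$ with $v_p(M)\equiv1$, so only $M=p$ contributes. This yields $S^{p\text{-new}}_{2k+2}(\Gamma_0(4p),\varepsilon_p^{(4p)}\varepsilon_d^{(4p)})$, which is the stated isomorphism.

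The main obstacle is Step 2: correctly identifying the $2$-adic Eichler invariant and the level. The even Clifford functor scales discriminants by $\tfrac12$ and is sensitive to how $Q$ is scaled. I would double-check the claim by verifying that the $2$-adic lattice attached to the Eichler order of level $4$ really has symbol $1^{+2}_{\text{II}}8^{\pm1}$ rather than something like $2^{\pm2}_{\text{II}}\dots$. I would also cross-check numerically with Magma on a small example such as $p=3$.
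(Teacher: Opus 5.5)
Your proposal is correct and matches the paper's proof: reduce at $p$ via $\hat{\psi}_{(p)}$ and Proposition \ref{p-mod}, rescale by $\frac{1}{p}$ to obtain an even lattice of discriminant $8p$ whose Clifford order is Eichler of level $4p$ in the algebra of discriminant $p$ (so $\varepsilon_2(\mathcal{O})=+1$ and $\varepsilon_p(\mathcal{O})=-1$), then apply Theorem \ref{thm: 6.12 in HTV}, where only $M=p$ contributes. Your justification of $\varepsilon_2(\mathcal{O})=+1$ adds detail the paper simply asserts: you match $\Pi_2$, a hyperbolic plane plus $\langle -8u\rangle$, with the ternary lattice of the standard level-$4$ Eichler order.
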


\begin{proof}
First we apply the map $\hat{\psi}_{(p)}$ and scale the quadratic map by $\frac{1}{p}$. As always, this produces a lattice $\Pi = V\cap\hat{\psi}_{(p)}(\widehat{\Lambda})$ of discriminant $8p$ in $(V,Q^{\left(\frac{1}{p}\right)})$ that satisfies $O(\widehat{\Lambda}) = O(\widehat{\Pi})$ by Proposition \ref{p-mod}. Lemma \ref{samespace} and Lemma \ref{scaling} then give an equality of Hecke modules: \[\mathcal{S}(O(V),O(\widehat{\Lambda}),W) = \mathcal{S}(O(V),O(\widehat{\Pi}),W),\] for every representation $W$ of $O(V)$. 

By the recipe outlined in Subsection \ref{Ternary Lattices and Quaternion orders} we find that the even Clifford algebra of $(V,Q^{(\frac{1}{p})})$ is a definite quaternion algebra $B = \text{Cl}^0(V)$ of discriminant $p$. The order $\mathcal{O} = \text{Cl}^0(\Pi)$ has discriminant $\text{discrd}(\mathcal{O}) = 4p$ and is Eichler with $\varepsilon_2(\mathcal{O}) = 1$ and $\varepsilon_p(\mathcal{O}) = -1$. Hence $\mathcal{O}$ is residually unramified everywhere and the result then follows from the results of Hein-Tornaria-Voight (see Theorem \ref{thm: 6.12 in HTV}).
\end{proof}

\subsection{Remaining genera of types B and E}
We are now left with the genera of types B and E (except $1$B, which we have already handled). The first step is to prove that these give rise to the same spaces of orthogonal modular forms.

\begin{thm}
Let $\Pi$ and $\Omega$ be lattices representing genera of types \textup{B} and \textup{E} with the same numerical part. Then for each representation $W$ of $V$ there is an equality of Hecke modules: \[\mathcal{S}(O(V),O(\widehat{\Pi}),W) = \mathcal{S}(O(V),O(\widehat{\Omega}),W).\]
\end{thm}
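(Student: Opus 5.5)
The plan is to reduce everything to a comparison of local orthogonal groups at the prime $2$ and then invoke Lemma \ref{samespace}. Genera of type B and type E with the same numerical part $N\in\{2,3,4,5\}$ share the same local lattice at $p$ (read off from the table of genera of half-discriminant $4p^2$). They also share the same unimodular local lattice $\langle 1,1,2\rangle$ at every prime $q\neq 2,p$. The only difference is at $2$, where type B has $\langle 1,2,4\rangle$ (or $\langle 1,-2,-4\rangle$ in the case $N=2$) and type E has $\langle 1,-5,-40\rangle$ (or $\langle 1,-1,-8\rangle$ for $N=2$).

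First, for $N\in\{3,4,5\}$ I will realise $\Pi$ and $\Omega$ inside a common global lattice. Start with a lattice $\Lambda$ of discriminant $2p^2$ representing Genus $N$, whose local lattice at $2$ is $\langle 1,1,2\rangle$. Then take $\Pi$ and $\Omega$ to be the global index $2$ sublattices $V\cap\widehat{\Pi}$ and $V\cap\widehat{\Omega}$. Their completions at $2$ are the sublattices with Gram matrices $\langle 1,2,4\rangle$ and $\langle 1,-5,-40\rangle$ constructed in Proposition \ref{2-misc}(ii), and they agree with $\Lambda_q$ at all $q\neq 2$. Because the genus of a lattice is determined by its completions, these lattices really do represent genera $N$B and $N$E. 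It therefore suffices to treat these particular representatives, since any two lattices in the same genus have conjugate adelic orthogonal groups and hence isomorphic Hecke modules. Proposition \ref{2-misc}(ii) then gives $O(\Pi_2)=O(\Omega_2)$, while $\Pi_q=\Omega_q$ for $q\neq 2$. Hence $O(\widehat{\Pi})=O(\widehat{\Omega})$, and Lemma \ref{samespace} yields the equality of Hecke modules.

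For $N=2$ I run the same construction starting from a representative of Genus $2$, which has local lattice $\langle 1,-1,-2\rangle$ at $2$, and use Proposition \ref{2-misc}(i) instead. It provides index $2$ sublattices with Gram matrices $\langle 1,-2,-4\rangle$ and $\langle 1,-1,-8\rangle$ having equal orthogonal groups. These are exactly the local lattices of genera $2$B and $2$E, so the argument finishes identically.

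The main obstacle is confirming that the statement covers exactly the cases handled by Proposition \ref{2-misc}. One check is that for each $N$ the pairing of B with E matches cases (i) and (ii) of that proposition. The other is that the local $p$-adic lattices coincide, which is immediate from the table. If a mismatch appeared in the case $N=2$, I would instead verify $O(\Pi_2)=O(\Omega_2)$ directly. I would do this by descending to their common index $2$ sublattice $\Sigma_2$, exactly as in the proof of Proposition \ref{2-misc}.
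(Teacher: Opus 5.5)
Your proposal is correct and follows essentially the same route as the paper. Both use Proposition \ref{2-misc}(i) for the pair $2$B/$2$E and Proposition \ref{2-misc}(ii) for the pairs $N$B/$N$E with $N\in\{3,4,5\}$, then the agreement of local lattices away from $2$, and finally Lemma \ref{samespace}. Your extra step of realising $\Pi$ and $\Omega$ as index $2$ sublattices of a common representative of Genus $N$ makes precise how the representatives must be positioned, which the paper's proof leaves implicit. It guarantees that $\Pi_q=\Omega_q$ for $q\neq 2$ and that the $2$-adic completions are exactly the sublattices of Proposition \ref{2-misc}. Your closing hedge about a possible mismatch when $N=2$ is unnecessary, since the genus tables confirm the match.
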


\begin{proof}
The genera $2$B and $2$E have local $2$-adic lattices isometric to $\langle 1,-2,-4\rangle$ and $\langle 1,-1,-8\rangle$. Proposition \ref{2-misc}(i) tells us that $O(\Pi_2) = O(\Omega_2)$. Since $O(\Pi_q) = O(\Omega_q)$ for all $q\neq 2$ we have that $O(\widehat{\Pi}) = O(\widehat{\Omega})$.

The other pairs of genera have local $2$-adic lattices isometric to $\langle 1,2,4\rangle$ and $\langle 1,-5,-40\rangle$. Proposition \ref{2-misc}(ii) tells us that $O(\Pi_2) = O(\Omega_2)$. Since $O(\Pi_q) = O(\Omega_q)$ for all $q\neq 2$ we have that $O(\widehat{\Pi}) = O(\widehat{\Omega})$.

Once again we have equality of adelic orthogonal groups in both cases, so Lemma \ref{samespace} provides us with the required equality of Hecke modules in each case: \[\mathcal{S}(O(V),O(\widehat{\Pi}),W) = \mathcal{S}(O(V),O(\widehat{\Omega}),W).\]
\end{proof}

It now suffices to work with lattices of genera $2$B, $3$B, $4$B and $5$B. These are all odd genera and so we run into the same issues as explained previously. In all cases we cannot apply the results of Hein-Tornaria-Voight, as we explain below, and can instead only offer conjectures based on computational evidence.

For Genus $2$B, the local Gram matrix at $p$ is of the form $\Lambda_p\sim\langle a,p,pb\rangle$ for some $a,b\in\{1,r\}$ and so we may apply the map $\hat{\psi}_{(p)}$ to get an odd lattice $\Pi = V \cap \hat{\psi}_{(p)}(\widehat{\Lambda})$ of discriminant $8p$ in $(V,Q^{(\frac{1}{p})})$. We can view $\Pi$ as an even lattice in $(V,Q^{(\frac{2}{p})})$ of discriminant $64p$, with corresponding quaternion algebra $B = \text{Cl}^0(V)$ of discriminant $p$ and order $\mathcal{O} = \text{Cl}^0(\Pi)$ of discriminant $\text{discrd}(\mathcal{O}) = 32p$. This order is maximal at $p$ with $\varepsilon_p(\mathcal{O}) = -1$ but is residually ramified at $2$ with $\varepsilon_2(\mathcal{O}) = 0$.

\begin{conj}\label{genus2be} Let $\Lambda$ be a lattice representing Genus $2$\textup{B}. Then for each $k\geq 0$ and $d\in\{1,2,p,2p\}$ there exists a Hecke-equivariant isomorphism (for the algebra generated by the Hecke operators $T_q$ with $q\neq 2,p$ prime) so that:
\[\mathcal{S}(O(V),O(\widehat{\Lambda}),W_{2k}\otimes \theta_d) \cong S^{p\textup{-new}}_{2k+2}\left(\Gamma_0(2p),\,\varepsilon_p^{(2p)}\varepsilon_d^{(2p)}\right)\oplus S_{2k+2}^{\textup{new}}\left(\Gamma_0(8p),\varepsilon_p^{(8p)}\varepsilon_d^{(8p)}\right).\]
\end{conj}

\begin{example}
Let $\Lambda$ be an odd lattice representing Genus $2$B with Gram matrix \[T = \begin{pmatrix}
        1 & 0 & 0\\
        0 & 14 & 0\\
        0 & 0 & 28
    \end{pmatrix},\] so that $\text{disc}(\Lambda) = 392 =  8\cdot 7^2$. Here, the class set has size $3$, and at weight $W_8$ one can compute that $\dim(\mathcal{S}(O(V),O(\widehat{\Lambda}),W_8)\big) = 9$. The characteristic polynomials of $T_3$, $T_5$ and $T_{11}$ factor as follows:
\[
\vspace{0.2in}
\renewcommand{\arraystretch}{1.15}
\begin{array}{l|l|l|l|c}
F_{3,i}(x) & F_{5,i}(x) & F_{11,i}(x) & \text{newform orbit} & (w_2,w_7)\\ \hline
x^2+14x-57576 & x^2+2730x+846720 & x^2-44940x-2036361600 & \href{https://www.lmfdb.org/ModularForm/GL2/Q/holomorphic/14/10/a/c/}{14.10.a.c} & (+,-)\\
\deg 3 & \deg 3 & \deg 3 & \href{https://www.lmfdb.org/ModularForm/GL2/Q/holomorphic/7/10/a/b/}{7.10.a.b} & (*, -)\\
\deg 4 & \deg 4 & \deg 4 & \href{https://www.lmfdb.org/ModularForm/GL2/Q/holomorphic/56/10/a/c/}{56.10.a.c} & (+,-)
\end{array}
\]

These are exactly the forms predicted by Conjecture \ref{genus2be}, i.e. we see exactly the newforms of level $7$ with $w_p=-1$,
and of levels $14$ and $56$ with $(w_2,w_p)=(+,-)$. The other newforms of these levels have
different Atkin-Lehner values and do not appear at trivial weight. Note that we do not see any newforms of level $28$ as these have $w_2 = +1$ (as does every newform of level $4p$ with $p$ an odd prime, explaining why no forms of level $4p$ feature in Conjecture \ref{genus2be}).
\end{example}

For Genus $3$B, the local Gram matrix at $p$ is again of the form $\Lambda_p\sim\langle a,p,pb\rangle$ for some $a,b\in\{1,r\}$ and so we may apply the map $\hat{\psi}_{(p)}$ to get an odd lattice $\Pi = V \cap \hat{\psi}_{(p)}(\widehat{\Lambda})$ of discriminant $8p$ in $(V,Q^{(\frac{1}{p})})$. We can view $\Pi$ as an even lattice in $(V,Q^{(\frac{2}{p})})$ of discriminant $64p$, with corresponding quaternion algebra $B = \text{Cl}^0(V)$ of discriminant $2$ and order $\mathcal{O} = \text{Cl}^0(\Pi)$ of discriminant $\text{discrd}(\mathcal{O}) = 32p$. This order is Eichler at $p$ with $\varepsilon_p(\mathcal{O}) = 1$ but is residually ramified at $2$ with $\varepsilon_2(\mathcal{O}) = 0$.

\newpage
\begin{conj}\label{genus3be}
Let $\Lambda$ be a lattice representing Genus $3$\textup{B}. Then for each $k\geq 0$ and $d\in\{1,2,p,2p\}$ we have the following Hecke-equivariant isomorphisms (for the algebra generated by the Hecke operators $T_q$ with $q\neq 2,p$ prime): 
\[\mathcal{S}(O(V),O(\widehat{\Lambda}),W_{2k}\otimes \theta_d)\cong S^{2\textup{-new}}_{2k+2}\left(\Gamma_0(4p),\varepsilon^{(4p)}_2\varepsilon^{(4p)}_d\right)\oplus S_{2k+2}^{8\textup{-new}}\left(\Gamma_0(8p),\varepsilon_2^{(8p)}\varepsilon_d^{(8p)}\right).\]
\end{conj}

\begin{example}
Let $\Lambda$ be an odd lattice representing Genus 3B with Gram matrix \[T = \begin{pmatrix}
        2 & 0 & -4\\
        0 & 25 & 20\\
        -4 & 20 & 28
    \end{pmatrix}.\] Here $\text{disc}(\Lambda) = 200 = 8\cdot 5^2$. At weight $W_8$ the space $\mathcal{S}(O(V),O(\widehat{\Lambda}),W_8)$ is $8$-dimensional. The characteristic polynomials of $T_3$ and $T_7$ factor as follows:
\[
\renewcommand{\arraystretch}{1.15}
\begin{array}{l|l|l|c}
F_{3,i}(x) & F_{7,i}(x) & \text{newform orbit} & \text{$(w_2,w_5)$}\\ \hline
x+156 & x+952 & \href{https://www.lmfdb.org/ModularForm/GL2/Q/holomorphic/2/10/a/a/}{2.10.a.a} & (-,*)\\
x-228 & x+6328 & \href{https://www.lmfdb.org/ModularForm/GL2/Q/holomorphic/4/10/a/a/}{4.10.a.a} & (-,*)\\
x-68 & x-10248 & \href{https://www.lmfdb.org/ModularForm/GL2/Q/holomorphic/8/10/a/b/}{8.10.a.b} & (-,*)\\ \hline
x-174 & x-4658 & \href{https://www.lmfdb.org/ModularForm/GL2/Q/holomorphic/10/10/a/c/}{10.10.a.c} & (-,+)\\
x^2+260x-3324 & x^2+380x-96250364 & \href{https://www.lmfdb.org/ModularForm/GL2/Q/holomorphic/20/10/a/b/}{20.10.a.b} & (-,+)\\
x^2-108x-23580 & x^2+908x-4271708 & \href{https://www.lmfdb.org/ModularForm/GL2/Q/holomorphic/40/10/a/c/}{40.10.a.c} & (-,+)
\end{array}
\]
The forms we see here are in accordance with Conjecture \ref{genus3be}, i.e.\ we see all newforms of level $2$, $4$, $8$ with $w_2=-1$,
together with all newforms of level $10$, $20$, $40$ with $(w_2,w_p)=(-,+)$. All other newforms of these levels have different Atkin-Lehner signs and so do not appear.
\end{example}

Genera $4$B and $5$B cannot benefit from an application of the map $\hat{\psi}_{(p)}$ due to having $p^2$-modular components in their local lattices at $p$. The best we can do at present is to view the lattice as an even one lying in $(V,Q^{(2)})$ of discriminant $64p^2$, with quaternion algebra $B = \text{Cl}^0(V)$ of discriminant $2$ and order $\mathcal{O} = \text{Cl}^0(\Lambda)$ of discriminant $\text{discrd}(\mathcal{O}) = 32p^2$. Both orders are residually ramified at $2$ (i.e.\ $\varepsilon_2(\mathcal{O}) = 0$) and are locally Eichler at $p$ with $\varepsilon_p(\mathcal{O}) = 1$ for Genus $4$B and locally non-Eichler at $p$ with $\varepsilon_p(\mathcal{O}) = -1$ for Genus $5$B. 

\begin{conj}\label{genus4be}
Let $\Lambda$ be a lattice representing Genus $4$\textup{B}. Then for each $k\geq 0$ and $d\in\{1,2,p,2p\}$ we have the following Hecke-equivariant isomorphisms (for the algebra generated by the Hecke operators $T_q$ with $q\neq 2,p$ prime): 
\[\mathcal{S}(O(V),O(\widehat{\Lambda}),W_{2k}\otimes \theta_d)\cong S^{2\textup{-new}}_{2k+2}\left(\Gamma_0(4p^2),\varepsilon^{(8)}_2\varepsilon^{(8)}_d\right)\oplus S^{8\textup{-new}}_{2k+2}\left(\Gamma_0(8p^2),\varepsilon^{(8p^2)}_2\varepsilon^{(8p^2)}_d\right).\]
\end{conj}

\begin{example}
Let $\Lambda$ be an odd lattice representing Genus $4B$ with Gram matrix \[T = \begin{pmatrix}
        1 & 0 & 0\\
        0 & 6 & 2\\
        0 & 2 & 66
    \end{pmatrix},\] so that $\text{disc}(\Lambda) = 392 = 8 \cdot 7^2$. At weight $W_8$ we compute $\dim  (\mathcal{S}(O(V),O(\widehat{\Lambda}),W_8) ) = 66$.
    The characteristic polynomials of $T_3$ and $T_5$ factor as follows: 
\[
\renewcommand{\arraystretch}{1.15}
\begin{array}{l|l|l|c}
F_{3,i}(x) & F_{5,i}(x) & \text{newform orbit} & (w_2,w_7)\\ \hline
(x+156)^2 & (x-870)^2 & \href{https://www.lmfdb.org/ModularForm/GL2/Q/holomorphic/2/10/a/a/}{2.10.a.a} & (-, *)\\
(x-228)^2 & (x+666)^2 & \href{https://www.lmfdb.org/ModularForm/GL2/Q/holomorphic/4/10/a/a/}{4.10.a.a} & (-, *)\\
(x-68)^2 & (x-1510)^2 & \href{https://www.lmfdb.org/ModularForm/GL2/Q/holomorphic/8/10/a/b/}{8.10.a.b} & (-, *)\\ \hline
x-170 & x-544 & \href{https://www.lmfdb.org/ModularForm/GL2/Q/holomorphic/14/10/a/b/}{14.10.a.b} & (-,+)\\
x^2+224x-5700 & x^2-1596x-840960 & \href{https://www.lmfdb.org/ModularForm/GL2/Q/holomorphic/28/10/a/a/}{28.10.a.a} & (-,-)\\
x^2+70x-9984 & x^2-1554x-3442720 & \href{https://www.lmfdb.org/ModularForm/GL2/Q/holomorphic/28/10/a/b/}{28.10.a.b} & (-,+)\\
\deg 3 & \deg 3 & \href{https://www.lmfdb.org/ModularForm/GL2/Q/holomorphic/56/10/a/b/}{56.10.a.b} & (-,-)\\
\deg 4 & \deg 4 & \href{https://www.lmfdb.org/ModularForm/GL2/Q/holomorphic/56/10/a/d/}{56.10.a.d} & (-,+)\\ \hline
\deg 3 & \deg 3 & \href{https://www.lmfdb.org/ModularForm/GL2/Q/holomorphic/98/10/a/j/}{98.10.a.j} & (-,+)\\
\deg 6 & \deg 6 & \href{https://www.lmfdb.org/ModularForm/GL2/Q/holomorphic/98/10/a/l/}{98.10.a.l} & (-,+)\\
\deg 6 & \deg 6 & \href{https://www.lmfdb.org/ModularForm/GL2/Q/holomorphic/196/10/a/f/}{196.10.a.f} & (-,+)\\
\deg 10 & \deg 10 & \href{https://www.lmfdb.org/ModularForm/GL2/Q/holomorphic/196/10/a/g/}{196.10.a.g} & (-,+)\\
\deg 9 & \deg 9 & \href{https://www.lmfdb.org/ModularForm/GL2/Q/holomorphic/392/10/a/l/}{392.10.a.l} & (-,+)\\
\deg 14 & \deg 14 & \href{https://www.lmfdb.org/ModularForm/GL2/Q/holomorphic/392/10/a/n/}{392.10.a.n} &(-,+)
\end{array}
\]

The forms we see here are in accordance with Conjecture \ref{genus4be}, i.e.\ we see all newforms of level $2$, $4$, $8$ with $w_2=-1$ twice, and all newforms of level $14$, $28$, $56$, $98$, $196$ and $392$ with $(w_2,w_7)=(-,*)$ once. All other newforms of these levels have different Atkin-Lehner signs and so do not appear.
\end{example}

\begin{conj}\label{genus5be}
Let $\Lambda$ be a lattice representing Genus $5$\textup{B}. Then for each $k\geq 0$ and $d\in\{1,2,p,2p\}$ there exists a Hecke-equivariant isomorphism (for the algebra generated by the Hecke operators $T_q$ with $q\neq 2,p$ prime) so that:
\begin{align*}\mathcal{S}(O(V),O(\widehat{\Lambda}),W_{2k}\otimes \theta_d) &\cong S_{2k+2}^{\textup{new}}\left(\Gamma_0(2),\varepsilon_2^{(2)}\varepsilon_d^{(2)}\right)&&\oplus S_{2k+2}^{4\textup{-new}}\left(\Gamma_0(8),\varepsilon_2^{(8)}\varepsilon_d^{(8)}\right)\\ &\oplus S_{2k+2}^{\textup{new}}\left(\Gamma_0(2p^2),\varepsilon_2^{(2p^2)}\varepsilon_d^{(2p^2)}\right)&&\oplus S_{2k+2}^{4p^2\textup{-new}}\left(\Gamma_0(8p^2),\varepsilon_2^{(8p^2)}\varepsilon_d^{(8p^2)}\right).\end{align*}
\end{conj}

\begin{example}
Let $\Lambda$ be an odd lattice representing Genus $5$B with Gram matrix \[T = \begin{pmatrix}
        1 & 0 & 0\\
        0 & 4 & 0\\
        0 & 0 & 98
    \end{pmatrix},\] where $\text{disc}(\Lambda) = 392 = 8\cdot 7^2$. For weight $W_8$ we find that $\dim(\mathcal{S}(O(V),O(\widehat{\Lambda}),W_8)) = 51$, and the characteristic polynomials of $T_3$, $T_5$ and $T_{11}$ factor as follows:
    \vspace{0.2in}
\[
\renewcommand{\arraystretch}{1.15}
\begin{array}{l|l|l|l|c}
F_{3,i}(x) & F_{5,i}(x) & F_{11,i}(x) & \text{newform orbit} & \text{$(w_2, w_7)$}\\ \hline
x+156 & x-870 & x+56148 & \href{https://www.lmfdb.org/ModularForm/GL2/Q/holomorphic/2/10/a/a/}{2.10.a.a} & (-, *)\\
x-228 & x+666 & x+30420 & \href{https://www.lmfdb.org/ModularForm/GL2/Q/holomorphic/4/10/a/a/}{4.10.a.a} & (-, *)\\
x-68 & x-1510 & x-3916 & \href{https://www.lmfdb.org/ModularForm/GL2/Q/holomorphic/8/10/a/b/}{8.10.a.b} & (-, *)\\ \hline
\deg 3 & \deg 3 & \deg 3 & \href{https://www.lmfdb.org/ModularForm/GL2/Q/holomorphic/98/10/a/j/}{98.10.a.j} & (-, +)\\
\deg 6 & \deg 6 & (\deg 3)^2 & \href{https://www.lmfdb.org/ModularForm/GL2/Q/holomorphic/98/10/a/l/}{98.10.a.l} & (-, +)\\
\deg 6 & \deg 6 & \deg 6 & \href{https://www.lmfdb.org/ModularForm/GL2/Q/holomorphic/196/10/a/f/}{196.10.a.f} & (-, +)\\
\deg 10 & \deg 10 & (\deg 5)^2 & \href{https://www.lmfdb.org/ModularForm/GL2/Q/holomorphic/196/10/a/g/}{196.10.a.g} & (-, +)\\
\deg 9 & \deg 9 & \deg 9 & \href{https://www.lmfdb.org/ModularForm/GL2/Q/holomorphic/392/10/a/l/}{392.10.a.l} & (-, +)\\
\deg 14 & \deg 14 & (\deg 7)^2 & \href{https://www.lmfdb.org/ModularForm/GL2/Q/holomorphic/392/10/a/n/}{392.10.a.n} &(-, +)
\end{array}
\]

\vspace{0.2in}
In accordance with Conjecture \ref{genus5be}, we see the newforms of level $2$, $4$ and $8$ with $w_2=-1$, each appearing once,
together with newforms of level $98$, $196$ and $392$ and Atkin-Lehner signs $(w_2,w_7) = (-1,+1)$, each also appearing once. In particular, the form \href{https://www.lmfdb.org/ModularForm/GL2/Q/holomorphic/8/10/a/a/}{8.10.a.a} (with $w_2=+1$) does not occur. Additionally, no newforms of level $7$, $14$, $28$, $56$ or $49$ occur. For example, \href{https://www.lmfdb.org/ModularForm/GL2/Q/holomorphic/14/10/a/b/}{14.10.a.b}, \href{https://www.lmfdb.org/ModularForm/GL2/Q/holomorphic/28/10/a/b/}{28.10.a.b} and \href{https://www.lmfdb.org/ModularForm/GL2/Q/holomorphic/56/10/a/d/}{56.10.a.d} (each having $(w_2,w_7)=(-,+)$) are not seen in this computation.
\end{example}

\appendix

\section{Summary table of results}\label{Summary table of results}
\noindent The following tables provide quick access to the results proved in this paper.\\

\noindent\textbf{Genera of half-discriminant $4$:}\\

\begin{center}
\begin{tabular}{c|c}
Label & $\Lambda_2$ \\ \hline
A & $\langle 2,2,2\rangle$  \\
B & $\langle 1,2,4\rangle $  \\
C& $\langle -1,-5,40\rangle $  \\
D& $\langle 1,1,8\rangle $ \\
E& $\langle 1,-5,-40\rangle $ \\
\end{tabular} \\
\end{center}
\vspace{0.1in}
\noindent\textbf{Modular forms arising from genera of half-discriminant $4$:}

\begin{center}
\vspace{0.1in}
\begin{tabular}{c|c|c}
Label & Space of modular forms & Result\\
\hline A & $S^{\textup{new}}_{2k+2}\left(\Gamma_0(2),\varepsilon^{(2)}_2\varepsilon^{(2)}_d\right)$ & Theorem \ref{genusA}\\
\hline B & $S_{2k+2}^{\text{new}}\left(\Gamma_0(2),\varepsilon_2^{(2)}\varepsilon_d^{(2)}\right) \oplus S_{2k+2}^{4\textup{-new}}\left(\Gamma_0(8),\varepsilon^{(2)}_2\varepsilon^{(2)}_d\right)$ & Conjecture \ref{conjBE}\\
\hline C & $S^{2\textup{-new}}_{2k+2}\left(\Gamma_0(4),\varepsilon^{(2)}_2\varepsilon^{(2)}_d\right)$ & Conjecture \ref{genusCD}\\
\hline D & $S^{2\textup{-new}}_{2k+2}\left(\Gamma_0(4),\varepsilon^{(2)}_2\varepsilon^{(2)}_d\right)$ & Conjecture \ref{genusCD}\\
\hline E & $S_{2k+2}^{\text{new}}\left(\Gamma_0(2),\varepsilon_2^{(2)}\varepsilon_d^{(2)}\right) \oplus S_{2k+2}^{4\textup{-new}}\left(\Gamma_0(8),\varepsilon^{(2)}_2\varepsilon^{(2)}_d\right)$ & Conjecture \ref{conjBE}\\
\end{tabular}
\end{center}

\vspace{0.2in}
\noindent\textbf{Genera of half-discriminant $p^2$:}

\begin{center}
\begin{tabular}{c|c|c|c|c|c}
\multirow{2}{*}{Label} & \multirow{2}{*}{$\Lambda_2$} & \multicolumn{4}{c}{$\Lambda_p$}\\ \cline{3-6} & & $p\equiv 1 \bmod 8$ & $p\equiv 3 \bmod 8$ &  $p\equiv 5 \bmod 8$ & $p\equiv 7 \bmod 8$\\ \hline $1$ & $\left\langle\begin{pmatrix}0 & 1\\ 1 & 0\end{pmatrix},-2\right\rangle$ & \multirow{2}{*}{$\langle r,p,pr\rangle$} & \multirow{2}{*}{$\langle 2,p,p\rangle$} & \multirow{2}{*}{$\langle 1,p,2p\rangle$} & \multirow{2}{*}{$\langle 1,p,p\rangle$}\\ \cline{1-2} $2$ & $\langle 1,-1,-2\rangle$ & & & &\\ \hline $3$ & \multirow{3}{*}{$\langle 1,1,2\rangle$} & $\langle 1,p,p\rangle$ & $\langle 1,p,2p\rangle$ & $\langle 2,p,p\rangle$ & $\langle -1,p,-p\rangle$\\ \cline{3-6} $4$ & & $\langle 1,1,p^2\rangle$ & $\langle 1,2,p^2\rangle$ & $\langle 1,1,2p^2\rangle$ & $\langle 1,-1,-p^2\rangle$\\ \cline{3-6} $5$ & & $\langle 1,r,p^2r\rangle$ & $\langle 1,1,2p^2\rangle$ & $\langle 1,2,p^2\rangle$ & $\langle 1,1,p^2\rangle$
\end{tabular}
\end{center}
\vspace{0.1in}
\noindent\textbf{Modular forms arising from half-genera of discriminant $p^2$:}

\begin{center}
\vspace{0.1in}
\begin{tabular}{c|c|c}
Label & Space of modular forms & Result\\
\hline $1$ & $S^{\textup{new}}_{2k+2}\left(\Gamma_0(p),\varepsilon^{(p)}_p\varepsilon^{(p)}_d\right)$ & Theorem \ref{genus1}\\
\hline $2$ & $S^{p\textup{-new}}_{2k+2}\left(\Gamma_0(2p),\varepsilon^{(2p)}_p\varepsilon^{(2p)}_d\right)$ & Theorem \ref{genus2}\\
\hline $3$ & $S^{2\textup{-new}}_{2k+2}\left(\Gamma_0(4p),\varepsilon^{(4p)}_2\varepsilon^{(4p)}_d\right)$ & Conjecture \ref{genus3}\\
\hline $4$ & $S^{2\textup{-new}}_{2k+2}\left(\Gamma_0(4p^2),\varepsilon_2^{(4p^2)}\varepsilon_d^{(4p^2)}\right)$ & Conjecture \ref{genus4}\\
\hline $5$ & $S^{2\textup{-new}}_{2k+2}\left(\Gamma_0(4),\varepsilon^{(4)}_2\varepsilon^{(4)}_d\right)\oplus S^{2p^2\textup{-new}}_{2k+2}\left(\Gamma_0(4p^2),\varepsilon^{(4p^2)}_2\varepsilon^{(4p^2)}_d\right)$ & Conjecture \ref{genus5}\\
\end{tabular}
\end{center}

\vspace{0.1in}
\newpage
\noindent\textbf{Genera of half-discriminant $4p^2$:}

\begin{center}
\begin{tabular}{c|c|c|c|c|c}
\multirow{2}{*}{Label} & \multirow{2}{*}{$\Lambda_2$} & \multicolumn{4}{c}{$\Lambda_p$}\\ 
\cline{3-6} & & $p\equiv 1 \bmod 8$ & $p\equiv 3 \bmod 8$ &  $p\equiv 5 \bmod 8$ & $p\equiv 7 \bmod 8$\\ \hline $1$A & $\left\langle\begin{pmatrix}0 & 1\\ 1 & 0\end{pmatrix},-8\right\rangle$ & \multirow{7}{*}{$\langle r,p,pr\rangle$} & \multirow{7}{*}{$\langle 2,p,p\rangle$} & \multirow{7}{*}{$\langle 1,p,2p\rangle$} & \multirow{7}{*}{$\langle 1,p,p\rangle$}\\ 
$1$B & $\left\langle\begin{pmatrix}2 & 1\\ 1 & 2\end{pmatrix},-40\right\rangle$ &  &  &  & \\ 
\cline{1-2} $2$A & $\langle 2,-2,-2\rangle$ &  &  &  & \\ 
$2$B & $\langle 1,-2,-4\rangle$ & & & & \\ 
$2$C & $\langle-1,-1,8\rangle$ & & & & \\ 
$2$D & $\langle 1,5,40 \rangle$ & & & & \\ 
$2$E & $\langle 1,-1,-8 \rangle$ & & & & \\ 
\hline $3$A & $\langle 2,2,2\rangle$ & \multirow{5}{*}{$\langle 1,p,p\rangle$} & \multirow{5}{*}{$\langle 1,p,2p\rangle$} & \multirow{5}{*}{$\langle 2,p,p\rangle$} & \multirow{5}{*}{$\langle -1,p,-p\rangle$}\\ 
$3$B & $\langle 1,2,4\rangle$ & & & & \\ 
$3$C & $\langle-1,-5,40\rangle$ & & & & \\ 
$3$D & $\langle 1,1,8 \rangle$ & & & & \\ 
$3$E & $\langle 1,-5,-40 \rangle$ & & & & \\ 
\hline $4$A & $\langle 2,2,2\rangle$ & \multirow{5}{*}{$\langle 1,1,p^2\rangle$} & \multirow{5}{*}{$\langle 1,2,p^2\rangle$} & \multirow{5}{*}{$\langle 1,1,2p^2\rangle$} & \multirow{5}{*}{$\langle 1,-1,-p^2\rangle$}\\ 
$4$B & $\langle 1,2,4\rangle$ & & & & \\ 
$4$C & $\langle-1,-5,40\rangle$ & & & & \\ 
$4$D & $\langle 1,1,8 \rangle$ & & & & \\ 
$4$E & $\langle 1,-5,-40 \rangle$ & & & & \\ 
\hline $5$A & $\langle 2,2,2\rangle$ & \multirow{5}{*}{$\langle 1,r,p^2r\rangle$} & \multirow{5}{*}{$\langle 1,1,2p^2\rangle$} & \multirow{5}{*}{$\langle 1,2,p^2\rangle$} & \multirow{5}{*}{$\langle 1,1,p^2\rangle$}\\ 
$5$B & $\langle 1,2,4\rangle$ & & & & \\ 
$5$C & $\langle-1,-5,40\rangle$ & & & & \\ 
$5$D & $\langle 1,1,8 \rangle$ & & & & \\ 
$5$E & $\langle 1,-5,-40 \rangle$ & & & & \\ 
\end{tabular}
\end{center}

\vspace{0.1in}
\newpage
\noindent\textbf{Modular forms arising from genera of half-discriminant $4p^2$:}

\begin{center}
\begin{tabular}{c|c|c}
Label & Space of modular forms & Result\\
\hline $1$A & $S^{p\textup{-new}}_{2k+2}\left(\Gamma_0(4p),\varepsilon^{(4p)}_p\varepsilon^{(4p)}_d\right)$ & Theorem \ref{genus1a}\\
 $1$B & $S^{\textup{new}}_{2k+2}\left(\Gamma_0(p),\varepsilon^{(p)}_p\varepsilon^{(p)}_d\right)$ & Theorem \ref{genus1b2acd}\\
\hline $2$A & $S^{p\textup{-new}}_{2k+2}\left(\Gamma_0(2p),\varepsilon^{(2p)}_p\varepsilon^{(2p)}_d\right)$ & Theorem \ref{genus1b2acd}\\
 $2$B & $S^{p\textup{-new}}_{2k+2}\left(\Gamma_0(2p),\,\varepsilon_p^{(2p)}\varepsilon_d^{(2p)}\right)\oplus S_{2k+2}^{\text{new}}\left(\Gamma_0(8p),\varepsilon_p^{(8p)}\varepsilon_d^{(8p)}\right)$ &  Conjecture \ref{genus2be}\\
 $2$C & $S^{p\textup{-new}}_{2k+2}\left(\Gamma_0(2p),\varepsilon^{(2p)}_p\varepsilon^{(2p)}_d\right)$ & Theorem \ref{genus1b2acd}\\
 $2$D & $S^{p\textup{-new}}_{2k+2}\left(\Gamma_0(2p),\varepsilon^{(2p)}_p\varepsilon^{(2p)}_d\right)$ & Theorem \ref{genus1b2acd}\\
 $2$E & $S^{p\textup{-new}}_{2k+2}\left(\Gamma_0(2p),\,\varepsilon_p^{(2p)}\varepsilon_d^{(2p)}\right)\oplus S_{2k+2}^{\text{new}}\left(\Gamma_0(8p),\varepsilon_p^{(8p)}\varepsilon_d^{(8p)}\right)$ & Conjecture \ref{genus2be}\\
\hline $3$A & $S^{2\textup{-new}}_{2k+2}\left(\Gamma_0(2p),\varepsilon^{(2p)}_2\varepsilon^{(2p)}_d\right)$ & Theorem \ref{3a4a5a}\\
 $3$B & $S^{2\textup{-new}}_{2k+2}\left(\Gamma_0(4p),\varepsilon^{(4p)}_2\varepsilon^{(4p)}_d\right)\oplus S_{2k+2}^{8\textup{-new}}\left(\Gamma_0(8p),\varepsilon_2^{(8p)}\varepsilon_d^{(8p)}\right)$ & Conjecture \ref{genus3be}\\
 $3$C & $S^{2\textup{-new}}_{2k+2}\left(\Gamma_0(4p),\varepsilon^{(4p)}_2\varepsilon^{(4p)}_d\right)$ & Theorem \ref{genus1b2acd} (Conjecture \ref{genus3})\\
 $3$D & $S^{2\textup{-new}}_{2k+2}\left(\Gamma_0(4p),\varepsilon^{(4p)}_2\varepsilon^{(4p)}_d\right)$ & Theorem \ref{genus1b2acd} (Conjecture \ref{genus3})\\
 $3$E & $S^{2\textup{-new}}_{2k+2}\left(\Gamma_0(4p),\varepsilon^{(4p)}_2\varepsilon^{(4p)}_d\right)\oplus S_{2k+2}^{8\textup{-new}}\left(\Gamma_0(8p),\varepsilon_2^{(8p)}\varepsilon_d^{(8p)}\right)$ & Conjecture \ref{genus3be}\\
\hline $4$A & $S^{2\textup{-new}}_{2k+2}\left(\Gamma_0(2p^2),\varepsilon^{(2p^2)}_2\varepsilon^{(2p^2)}_d\right)$ & Theorem \ref{3a4a5a}\\
 $4$B & $S^{2\textup{-new}}_{2k+2}\left(\Gamma_0(4p^2),\varepsilon^{(8)}_2\varepsilon^{(8)}_d\right)\oplus S^{8\textup{-new}}_{2k+2}\left(\Gamma_0(8p^2),\varepsilon^{(8p^2)}_2\varepsilon^{(8p^2)}_d\right)$ & Conjecture \ref{genus4be}\\
 $4$C & $S^{2\textup{-new}}_{2k+2}\left(\Gamma_0(4p^2),\varepsilon_2^{(4p^2)}\varepsilon_d^{(4p^2)}\right)$ & Theorem \ref{genus1b2acd} (Conjecture \ref{genus4})\\
 $4$D & $S^{2\textup{-new}}_{2k+2}\left(\Gamma_0(4p^2),\varepsilon_2^{(4p^2)}\varepsilon_d^{(4p^2)}\right)$ & Theorem \ref{genus1b2acd} (Conjecture \ref{genus4})\\
 $4$E & $S^{2\textup{-new}}_{2k+2}\left(\Gamma_0(4p^2),\varepsilon^{(8)}_2\varepsilon^{(8)}_d\right)\oplus S^{8\textup{-new}}_{2k+2}\left(\Gamma_0(8p^2),\varepsilon^{(8p^2)}_2\varepsilon^{(8p^2)}_d\right)$ &Conjecture \ref{genus4be}\\
\hline $5$A & $S^{\textup{new}}_{2k+2}\left(\Gamma_0(2),\varepsilon^{(2)}_2\varepsilon^{(2)}_d\right)\oplus S^{\textup{new}}_{2k+2}\left(\Gamma_0(2p^2),\varepsilon^{(2p^2)}_2\varepsilon^{(2p^2)}_d\right)$ & Theorem \ref{3a4a5a}\\
&&\\
 \multirow{2}{*}{$5$B} & $S_{2k+2}^{\textup{new}}\left(\Gamma_0(2),\varepsilon_2^{(2)}\varepsilon_d^{(2)}\right)\oplus S_{2k+2}^{4\textup{-new}}\left(\Gamma_0(8),\varepsilon_2^{(8)}\varepsilon_d^{(8)}\right)$ & \multirow{2}{*}{Conjecture \ref{genus5be}}\\ & $\oplus S_{2k+2}^{\textup{new}}\left(\Gamma_0(2p^2),\varepsilon_2^{(2p^2)}\varepsilon_d^{(2p^2)}\right)\oplus S_{2k+2}^{4p^2\textup{-new}}\left(\Gamma_0(8p^2),\varepsilon_2^{(8p^2)}\varepsilon_d^{(8p^2)}\right)$ & \\
 && \\
 $5$C & $S^{2\textup{-new}}_{2k+2}\left(\Gamma_0(4),\varepsilon^{(4)}_2\varepsilon^{(4)}_d\right)\oplus S^{2p^2\textup{-new}}_{2k+2}\left(\Gamma_0(4p^2),\varepsilon^{(4p^2)}_2\varepsilon^{(4p^2)}_d\right)$ & Theorem \ref{genus1b2acd} (Conjecture \ref{genus5})\\
 $5$D & $S^{2\textup{-new}}_{2k+2}\left(\Gamma_0(4),\varepsilon^{(4)}_2\varepsilon^{(4)}_d\right)\oplus S^{2p^2\textup{-new}}_{2k+2}\left(\Gamma_0(4p^2),\varepsilon^{(4p^2)}_2\varepsilon^{(4p^2)}_d\right)$ & Theorem \ref{genus1b2acd} (Conjecture \ref{genus5})\\
 &&\\
   \multirow{2}{*}{$5$E} & $S_{2k+2}^{\textup{new}}\left(\Gamma_0(2),\varepsilon_2^{(2)}\varepsilon_d^{(2)}\right)\oplus S_{2k+2}^{4\textup{-new}}\left(\Gamma_0(8),\varepsilon_2^{(8)}\varepsilon_d^{(8)}\right)$ & \multirow{2}{*}{Conjecture \ref{genus5be}}\\ & $\oplus S_{2k+2}^{\textup{new}}\left(\Gamma_0(2p^2),\varepsilon_2^{(2p^2)}\varepsilon_d^{(2p^2)}\right)\oplus S_{2k+2}^{4p^2\textup{-new}}\left(\Gamma_0(8p^2),\varepsilon_2^{(8p^2)}\varepsilon_d^{(8p^2)}\right)$ & \\
\end{tabular}
\end{center}



\end{document}